\documentclass[a4paper,12pt]{article}
\usepackage{hyperref}
\usepackage[T1]{fontenc}
\usepackage[utf8]{inputenc}

\usepackage[english]{babel}
\usepackage{mathtools}
\usepackage{amsfonts}
\usepackage{color}
\usepackage{amsmath}
\usepackage{amsthm}
\usepackage{amssymb}
\usepackage{mathrsfs}
\usepackage{amstext}
\usepackage{float}

\usepackage{todonotes}

\usepackage[normalem]{ulem}

\usepackage{graphicx}
\usepackage{tikz}
\usepackage{subfig}
\usepackage{pgfplots}
\usepgfplotslibrary{fillbetween}

\usepackage{authblk}
\usepackage{stmaryrd}
\usepackage{makeidx}
\makeindex

\numberwithin{equation}{section}

\theoremstyle{definition}
\newtheorem{thm}{Theorem}[section]
\newtheorem{rem}[thm]{Remark}
\newtheorem{defi}[thm]{Definition}
\newtheorem{lem}[thm]{Lemma}
\newtheorem{cor}[thm]{Corollary}
\newtheorem{prop}[thm]{Proposition}

\newtheorem{q}[thm]{Question}


\newtheorem*{thm*}{Theorem}
\newtheorem*{rem*}{Remark}
\newtheorem*{folg*}{Folgerung}
\newtheorem*{examples*}{Beispiele}
\newtheorem*{ex*}{Beispiel}
\newtheorem*{lem*}{Lemma}
\newtheorem*{prop*}{Proposition}
\newtheorem*{defi*}{Definition}
\newtheorem*{exercise*}{Übung}
\newtheorem*{conj*}{Conjecture}
\newtheorem*{q*}{Question}

\usepackage{cleveref}


\newcommand{\ba}{\mathbf{a}}

\newcommand{\bm}{\mathbf{m}}

\newcommand{\bx}{\mathbf{x}}
\newcommand{\by}{\mathbf{y}}


\newcommand{\bbN}{\mathbb{N}}

\newcommand{\bbR}{\mathbb{R}}

\newcommand{\bbT}{\mathbb{T}}

\newcommand{\bbZ}{\mathbb{Z}}





\newcommand{\ra}{\rightarrow}

\newcommand{\sm}{\setminus}
\newcommand{\sbse}{\subseteq}


\newcommand{\dv}{\operatorname{dv}}

\newcommand{\iu}{\mathrm{i}}
\newcommand{\id}{\operatorname{id}}

\newcommand{\supp}{\operatorname{supp}}


\newcommand{\TBC}{\textcolor{red}{\textbf{TO BE CONTINUED}}}

\newcommand{\dx}{\text{d}}

\allowdisplaybreaks

\author{Dmitriy Bilyk, Nicolas Nagel, Ian Ruohoniemi}
\title{Minimizing point configurations for tensor product energies on the torus}
\date{}

\pgfplotsset{yticklabel style={text width=3em,align=right}}

\begin{document}
	
	\maketitle
	
    
\begin{abstract}
 We study point configurations on the torus $\mathbb T^d$  that  minimize interaction energies with tensor product structure which arise naturally  in the context of discrepancy theory and  quasi-Monte Carlo integration. Permutation sets on $\mathbb T^2$  and Latin hypercube sets in higher dimensions (i.e. sets whose projections onto coordinate axes are equispaced points)  are natural candidates to be energy minimizers. We show that such point configurations that have only one distance in the vector sense minimize the energy for a wide range of potentials, in other words, such sets satisfy a tensor product version of universal optimality. This applies, in particular, to three- and five-point Fibonacci lattices. We also  characterize all lattices with this property and exhibit some non-lattice sets of this type.  In addition, we obtain several further structural results about global and local minimizers of tensor product energies. 
 
\end{abstract}

\section{Introduction}

\subsection{Energies with product structure}

Denote by $\bbT = \bbR/\bbZ$ the unit  torus, where we will identify elements in $\bbT$ with those in the half-open interval $[0, 1)$. In the present paper we explore the properties of minimizers of  interaction energies on $\mathbb T^d$ with kernels that have tensor product structure.  To be more precise, consider the kernel  $F: \mathbb T^d  \rightarrow [0, \infty)$ of the form 
\begin{equation}\label{e.f}
F(t_1, \dots, t_d)  = f(t_1) \dots f(t_d), 
\end{equation}
where 
$f: \bbT \ra [0, \infty)$. Throughout the paper we will assume that $f$, viewed  as   a non-negative  $1$-periodic function on $\bbR$, is \emph{even and continuous} on $(0,1)$. In most situations $f$ will actually be assumed to be continuous on the whole real line, but some  results also apply when $f$ is singular at $0$ in the sense of $\lim_{t \ra 0}f(t) = + \infty$. We will refer to both $F$ and $f$ as \emph{potentials} throughout the paper.

For a (multi)set of $N$ points $X = \{ \bx^0, \bx^1, \dots, \bx^{N-1} \} \subset \mathbb T^d$ with components $\bx^k = (x^k_1, \dots, x^k_d)$, we define the \emph{tensor product energy} as 
\begin{equation}\label{e.energy}
E_F(X) 
= \sum_{\substack{k, \ell=0 \\ k \neq \ell}}^{N-1}F({\bf{x}}^k -  {\bf{x}}^\ell) = \sum_{\substack{k, \ell=0 \\ k \neq \ell}}^{N-1} \prod_{i=1}^d  f(x_i^k - x_i^\ell).
\end{equation}
This quantity can be intuitively viewed as the total potential energy of $N$ particles in positions ${\bf x}^0, \bx^1, \dots, {\bf x}^{N-1} $ which interact according to the potential given by $F$. One is then naturally interested in minimizing the energy \eqref{e.energy} over all $N$-point configurations and finding minimizing configurations of points, i.e.  equilibrium distributions. We shall call such configurations \emph{energy minimizers}. While extensive literature  exists on energy minimization in various domains in the case where the kernel $F({\bf{x}}, {\bf{y}})$ depends on the Euclidean (or another) distance between ${\bf{x}}$ and  ${\bf{y}}$ (see for example the book \cite{BHS}), kernels with tensor product structures have not received the same attention \cite{HO16, Nag25b}. 

The condition that $f$ is even, or equivalently $f(t) = f (1-t)$, means that $f (x - y)$ depends only on the \emph{geodesic distance}
\begin{align} \label{eq:geod_dist}
    \delta(x, y) = \min\{|x-y|, 1-|x-y|\}
\end{align}
between $x$ and $y$ in $\mathbb T$. 
For most potentials of interest, $f$ is decreasing on $(0,1/2]$, as this corresponds to the interaction being repulsive in each coordinate. 

 We remark that  the energy  \eqref{e.energy} of a configuration is invariant under symmetries of the torus (translations, reflections, and permutations of coordinates) and all uniqueness statements, e.g. in Definition \ref{def:perm_set} or Theorem \ref{thm:unique_dist_lat}, are to be understood modulo these symmetries.  In particular, since  the energy \eqref{e.energy} is translation invariant, we may assume without loss of generality  that ${\bf x}^0 = (0, \dots, 0)$.

Energies of the type \eqref{e.energy} with tensor product kernels \eqref{e.f} naturally arise in several fields such as discrepancy theory and quasi-Monte Carlo (QMC) integration. Warnock's formula \cite{HKP21, HO16, War72} states that minimizing the periodic $L^2$-discrepancy of $N$ points in $\mathbb T^d$ is equivalent to minimizing the  energy \eqref{e.energy} with the kernel \eqref{e.f}, where $f(t) = \frac12 - t + t^2$ on $[0,1)$. It is also known \cite{HO16, Nag25b}  that minimizing the worst-case integration error of the quasi-Monte Carlo rule over a certain Sobolev space corresponds to the energy minimization problem with the potential
\begin{align} \label{eq:qmc_potential}
    f_p(t) = 1 + \frac{p}{2} \left( \frac16 - t + t^2\right),
\end{align}
where $p > 0$ is a fixed parameter (notice that $f_6= 3f$, where $f$ is the aforementioned potential from Warnock's formula). Similar statements hold for other reproducing kernel Hilbert spaces (see the Appendix §\ref{s.potentials} and \cite{BD09, BT03} or \cite[§9]{NW10}). 
The relation to discrepancy theory provides possible candidates for  energy minimizers, namely low-discrepancy sets. 

\subsection{Dimension $d=2$: Permutation sets and Fibonacci lattices} 

Some of the most well-known low-discrepancy point sets in $\bbT^2$ include the digit-reversing \emph{van der Corput sets} when $N = 2^n$  (see e.g. \cite{HKP21} for a definition) and the \emph{Fibonacci lattices} which  will keep appearing in  this paper: if $N = F_j$ is a Fibonacci number, the $N$-point Fibonacci lattice is defined by
\begin{align} \label{eq:fib_lat}
	\Phi_j \coloneqq \left\{\left(\frac{k}{F_j}, \left\{\frac{k F_{j-1}}{F_j}\right\}\right): k=0, 1, \dots, F_j-1\right\},
\end{align}
where $\{\cdot\}$ denotes the fractional part.  This set can be thought of as a rational approximation to the famous Kronecker set $\big\{  (k/N, \{ k \varphi \} ): k \in \bbN_0 \big\}$, where $\varphi$ is the golden section. Fibonacci lattices have been studied extensively in the context of discrepancy theory and numerical integration and perform strongly in these settings \cite{BTY12-2, BTY12-1}.

Both of the aforementioned examples, the van der Corput set and the Fibonacci lattice, have important structural properties. First, these are $N$-point configurations that are subsets of the $(N\times N)$-grid
$$
\left\{ \left( \frac{k}{N}, \frac{\ell}{N} \right): \, k, \ell = 0,1,\dots, N-1 \right\}
$$
in $ [0,1)^2 \simeq \mathbb T^2$. We shall call such sets  {\it grid sets}. More importantly these examples are of the form 
\begin{equation}\label{e.permutation}
X_\sigma = \{ {\bf x}^k \}_{k=0}^{N-1} = \left\{\left(\frac{k}{N}, \frac{\sigma (k) }{N}\right): k=0, 1, \dots, N-1\right\},
\end{equation}
where $\sigma$ is a permutation of the set $\{0,1,\dots, N-1\}$  (if we assume ${\bf x}^0 = (0,0)$, that is we fix $\sigma (0) = 0$, we can view $\sigma $ just as easily as a permutation of $\{ 1,\dots, N-1\}$).

\begin{defi}\label{def:perm_set}
We shall call configurations $X \subset \mathbb T^2$ which (up to translations) have the form \eqref{e.permutation}  {\emph{permutation sets}}.  
\end{defi} 

A special place among the permutation sets is taken by  {\it integration lattices} (which we shall refer to simply as {\it lattices}), i.e. sets of the form 
\begin{equation}\label{e.lattice}
\left\{\left(\frac{k}{N}, \left\{\frac{hk }{N}\right\}\right): k=0, 1, \dots, N-1\right\},
\end{equation}
where $h$ and $N$ are relatively prime integers. The condition that $h$ and $N$ are relatively prime guarantees that \eqref{e.lattice} is indeed a permutation set. Notice that such sets are restrictions to $[0,1)^2$ of lattices in $\mathbb R^2$ generated by the vectors $(1/N, h/N)$ and $(0,1)$, which are $1$-periodic in both coordinate directions. Fibonacci lattices \eqref{eq:fib_lat} are particular examples of such sets.

We note that permutation sets are those sets whose projections onto both $x$- and $y$-axes are $N$ equispaced points, i.e. they are grid sets with exactly one point in each row and each column.  This interpretation suggests that, since many energies in dimension $d=1$ are minimized by equispaced points (see \S \ref{sec:equispaced}), it is natural to expect that permutation sets minimize tensor product energies on $\mathbb T^2$ for large classes of potentials. We then have the following question: {\emph{which conditions on the potential $f$ guarantee that  minimizers of the tensor product energy \eqref{e.energy} are permutation sets?}} This question admits a variety of versions and interpretations: e.g., when does this hold for all $N$, for specific values of $N$, or for all $N$ large enough? When are energies minimized by lattices? Does it hold if sets are restricted to the $(N\times N)$-grid? 

Based on the  particularly important role played by the Fibonacci lattices in low discrepancy constructions and on ample numerical evidence \cite{HO16}, we conjecture that whenever $N$ is a Fibonacci number,  \emph{the Fibonacci lattice minimizes the energy for a broad class of potentials $f$}. This conjectured phenomenon (the same configuration minimizing a large variety of energies) is a natural analogue of the notion of {\emph{universal optimality}}  that has been studied on the sphere $\mathbb S^d$ \cite{CK07} and in the plane \cite{CKMRV22}, and is known to be very rare.

In the end, we remark that the condition that $f$ is logarithmically convex is very natural as it is necessary and sufficient for the tensor product $F$ to be convex as a function of $d$ variables \cite{SS12b}. 

\subsection{Higher dimensions $d\ge 3$} 
In dimensions $d \geq 3$ it becomes significantly harder to construct good point sets. To the knowledge of the authors, no study of global optimizers of $L_2$-discrepancy or quasi-Monte Carlo worst case errors in dimensions $d \geq 3$ has been carried out so far and only asymptotic results are known \cite{CS02, GSY16a}. A straightforward extension of the concept of permutation sets in higher dimensions to sets on the $N^d$-grid $\{k/N: k=0, 1, \dots, N-1\}^d$ leads to Latin (hyper)cubes.

\begin{defi} \label{def:lat_hypcube}
    A \emph{Latin hypercube set} is defined by permutations $\sigma_2, \dots, \sigma_d$ of the set $\{0, 1, \dots, N-1\}$ via
    $$
    X_{\sigma_2, \dots, \sigma_d} = \left\{\left(\frac{k}{N}, \frac{\sigma_2 (k) }{N}, \dots, \frac{\sigma_d(k)}N\right): k=0, 1, \dots, N-1\right\}.
    $$
\end{defi}
In other words, it is a set whose projections on all coordinate axes are equispaced points.  Thus a {\emph{permutation set}} is a Latin hypercube set with $d=2$. 
Analogously to $\mathbb T^2$, lattices in $\bbT^d$ of the form 
$$\left\{\left(\frac{k}{N}, \left\{ \frac{h_2 k }{N}\right\} , \dots, \left\{ \frac{h_{d} k}N \right\} \right): k=0, 1, \dots, N-1\right\},
 $$ which are generated by a vector
$
\left(\frac1N, \frac{h_2}N, \dots, \frac{h_d}N\right)
$
with  $\gcd(h_2, N) = \dots = \gcd(h_d, N) = 1$, are common examples of Latin hypercube sets. 

Just as in the case of permutation set in dimension $d=2$,  the  same questions naturally arise in dimension $d\ge 3$: {\emph{For which potentials $f$ do  Latin hypercubes  minimize the tensor product energy? Can one identify specific lattices or other Latin hypercube sets which minimize large classes of energies?}}  

\subsection{Main results}
Even though complete answers to the aforementioned questions appear to be out of reach for now, we provide a plethora of interesting results 
which open up a new direction in energy minimization. In particular, we prove the following:

\begin{itemize}
\item As a first indication that permutation  and Latin hypercube sets might naturally be minimizers of tensor product energies in all dimensions $d\ge 2$, we show that points in (local) minimizers cannot coincide in any coordinate provided that the potential $f$ generates sufficient repulsion at small scales (Theorem \ref{prop:coordinate_improvement}). 
\item  In Theorem \ref{thm:optimality}, which can be considered the main result of the paper, we prove that  in all dimensions $d\ge 2$ {\emph{Latin hypercube sets with a single distance vector}} (in other words, sets of maximal separation in which each interaction contributes the same amount to the energy) minimize the tensor product energy with potential $f$  whenever the energy with potential $g= \log f $ on $\mathbb T$ is minimized by equispaced points. Sufficient conditions for this property include the following:  

(a) $f$ is logarithmically convex and decreasing on $(0,1/2]$, or 

(b) $\log f(t)$ 
is absolutely monotone with respect to $s = \cos (2\pi t)$.

(See \S\ref{sec:equispaced} for more details) 
\item We  exhibit many  sets in dimensions with a single distance vector in dimensions $d\ge 2$. In fact, in Theorem \ref{thm:unique_dist_lat} we characterize all the lattices with a single distance vector. We also give many examples of non-lattice sets with this property. This provides a large collection of point configurations which satisfy this tensor  product version of universal optimality. 
\item In particular, the $3$- and $5$-point Fibonacci lattices are easily seen satisfy the assumptions of Theorem \ref{thm:optimality}.  Thus the $5$-point Fibonacci  lattice $\Phi_5$ minimizes the energy for  potentials $f$ as above, see Corollary \ref{cor:5}, i.e. $\Phi_5$ is universally optimal with respect to these  classes of potentials (and a similar result holds for the $3$-point Fibonacci lattice, Corollary \ref{cor:3}). 
\item Moreover, the $5$-point Fibonacci  lattice $\Phi_5$  minimizes the tensor product energy \eqref{e.energy} among all $5$-point permutation sets if $f$ is only assumed to be decreasing  on $(0,1/2]$, Lemma \ref{lem:5perm}. 
\item If $f$ is decreasing and logarithmically convex on $(0,1/2]$, lattices have the smallest energy among all sets of the same {\emph{permutation type}} (see Definition \ref{def.perm_type}), i.e. energy minimization under these restriction  yields permutation sets (Theorem \ref{thm:permtype}). As a corollary, under the same conditions every lattice is a local energy  minimizer (Corollary \ref{cor:lattice}).
\item We also address energy of $4$- and $6$-point sets on  $\mathbb{T}^2$. For $N=4$,  we find  global minimizers of the energy  when $f$ is differentiable on $(0, 1)$, and decreasing and logarithmically convex on $(0,1/2]$ (Proposition \ref{prop:n4}), which extends a numerical result from \cite{HO16} to a wide range of potentials. 

Moreover, we observe that a $4$-point permutation set almost never minimizes a tensor product energy and is never the unique minimizer (Remark \ref{rem:n4not}.

\item For $N=6$ a similar construction yields a minimizer among all $6$-point  configurations restricted to a natural choice of permutation type (Proposition \ref{prop:n6}). 
\end{itemize}

In the Appendix we provide a sampler  of  relevant potentials and discuss their properties.

\section{Optimality of Latin hypercube sets with a single   distance vector}\label{s.1dist}

In this section  we will present the aforementioned   collection of point configurations which minimize broad classes of tensor product energies (i.e. possess a certain version of universal optimality). To understand the relevant properties of the underlying potentials, we first discuss energy on the circle.

\subsection{Equispaced points as energy minimizers on the circle}\label{sec:equispaced}


While the one-dimensional case $\bbT$ does not exhibit tensor product structure, it is an important starting point for our investigations. In this setting, if all points repel each other equally, it is natural to expect the energy  $$ E_f(x^0, x^1, \dots, x^{N-1}) =  \sum_{\substack{k, \ell=0 \\ k \neq \ell}}^{N-1}f ({{x}}^k -  {{x}}^\ell)$$ to be minimized by \emph{equispaced points}
$$
\{ {{x}}^0, {{x}}^1,\dots, {{x}}^{N-1} \} = \left\{\frac0N, \frac1N, \dots, \frac{N-1}N\right\} \subset \bbT.
$$
In the context of numerical integration, the optimality of equispaced points was proven in a broad sense in \cite{Zen77}. However, in the general energy setting, the precise characterizations of  the potentials $f$ for which this happens is still elusive. Some known sufficient conditions on $f$  are formulated below. 
Recall that we always assume $f$  (as a $1$-periodic function on $\mathbb R$) to be even, i.e. $f(t) = f(1-t)$.

\begin{prop} \label{prop:equispaced} 
Let $N \in \bbN$. Then
\begin{equation}\label{eq:equispace}
E_f(x^0, x^1, \dots, x^{N-1}) \geq E_f\left(\frac0N, \frac1N, \dots, \frac{N-1}N\right)
\end{equation}
for all $x^0, x^1, \dots, x^{N-1} \in [0, 1)$   
provided that $f$ satisfies  any one of the following conditions:
\begin{itemize}
    \item [(i)] $f$ is convex and decreasing on $(0, 1/2]$;

    \item [(ii)] $f(t) = h(\cos(2\pi t))$ where $h: [-1, 1) \rightarrow [0, \infty)$  
     is absolutely monotone (up to an additive constant), i.e. $h$ is  a $C^\infty$-function satisfying  $h^{(k)}(s) \geq 0$ for all $s \in (-1, 1)$ and all $k \in \bbN$; 

    \item [(iii)]  $f\in C (\mathbb T)$ and $f(t) = \sum_{m=0}^\infty a_m \cos(2\pi mt)$ pointwise with  $a_m \geq 0$ for all $m > 0$, and $a_m = 0$ for all $m > 0$ with $N | m$.
\end{itemize}
\end{prop}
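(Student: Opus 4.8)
The plan is to handle the three conditions by related but distinct arguments, all built on the identity below. Write $P_m(X)=\sum_{k=0}^{N-1}e^{2\pi i m x^k}$. Whenever $f$ has a pointwise Fourier expansion $f(t)=\sum_{m\ge 0}a_m\cos(2\pi m t)$ — which holds under (iii) by hypothesis, and under (ii) after the reduction below — one has
\[
E_f(X)=\sum_{k,\ell}f(x^k-x^\ell)-Nf(0)=\sum_{m\ge 0}a_m\,|P_m(X)|^2-Nf(0),
\]
since $\sum_{k,\ell}\cos\bigl(2\pi m(x^k-x^\ell)\bigr)=|P_m(X)|^2$ and interchanging the finite double sum with the convergent series $\sum_{m\ge 1}a_m\cos(2\pi m s)=f(s)-a_0$ is harmless. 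For the equispaced configuration $C_N$ one has $P_m(C_N)=N$ if $N\mid m$ and $P_m(C_N)=0$ otherwise. Condition (iii) is then immediate: as $a_m\ge 0$ for $m\ge 1$, dropping all terms but $m=0$ gives $E_f(X)\ge a_0N^2-Nf(0)$, while for $C_N$ the only surviving terms have $N\mid m$, and by hypothesis $a_m=0$ for all such $m\ge 1$, so $E_f(C_N)=a_0N^2-Nf(0)$ meets the bound.

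For condition (ii), Bernstein's theorem gives $h(s)=\sum_{k\ge 0}b_k(1+s)^k$ with $b_k\ge 0$, hence $E_f=\sum_{k\ge 0}b_k\,E_{f_k}$ with $f_k(t)=(1+\cos 2\pi t)^k$, and it suffices to show $C_N$ minimizes each $E_{f_k}$. Putting $u^k=e^{2\pi i x^k}$ and using $(1+\cos 2\pi(x^i-x^j))^k=2^{-k}|u^i+u^j|^{2k}$, a binomial expansion and Vandermonde's identity give
\[
\sum_{i,j}|u^i+u^j|^{2k}=\sum_{|m|\le k}\binom{2k}{k-m}\,|P_m(X)|^2 .
\]
If $k\le N-1$, the only multiple of $N$ among $\{-k,\dots,k\}$ is $0$, so $C_N$ kills every term with $m\ne 0$ and minimizes $E_{f_k}$ exactly as in (iii). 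The case $k\ge N$ is the crux: now $|P_{jN}(C_N)|^2=N^2$ is \emph{maximal}, so merely discarding terms is useless, and one has to run the Delsarte--Yudin linear programming bound on the circle — produce an auxiliary $g\le f_k$ with nonnegative Fourier coefficients that vanish at the nonzero multiples of $N$ and with $g(j/N)=f_k(j/N)$ for $1\le j\le N-1$; the absolute monotonicity of $s\mapsto(1+s)^k$ is exactly what makes such a $g$ exist, since the relevant interpolation remainder keeps a fixed sign. This is precisely the universal optimality of the regular $N$-gon on $\mathbb S^1$ for completely monotone functions of the chordal distance, and I expect constructing (or locating in the literature and citing) this ``magic function'' to be the main obstacle of the whole proposition; the rest is routine.

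Condition (i) I would prove by decomposing into arcs. Assume first the points are distinct (otherwise the inequality is trivial, since $f$ decreasing gives $f(0)\ge f(m/N)$ for every $m$), and order them cyclically $0=x^0<x^1<\dots<x^{N-1}<1$. For $1\le j\le N-1$ and $0\le k\le N-1$ let $a_{k,j}\in(0,1)$ be the length of the counterclockwise arc from $x^k$ to $x^{(k+j)\bmod N}$. Then each ordered pair $(k,\ell)$ with $k\ne\ell$ corresponds to a unique $(\ell,j)$ with $j=(k-\ell)\bmod N$ and $f(x^k-x^\ell)=f(a_{\ell,j})$, so
\[
E_f(X)=\sum_{j=1}^{N-1}\ \sum_{k=0}^{N-1}f(a_{k,j}),\qquad \sum_{k=0}^{N-1}a_{k,j}=j,
\]
the second identity holding because each of the $N$ gaps between consecutive points is counted $j$ times. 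The hypothesis that $f$ is convex \emph{and} decreasing on $(0,1/2]$, together with evenness, forces $f$ to be convex on all of $(0,1)$: it is convex on $(0,1/2]$ and on $[1/2,1)$, and at $1/2$ its one-sided derivatives satisfy $f'(1/2^-)\le 0\le f'(1/2^+)$. Jensen's inequality applied to the $N$ numbers $a_{0,j},\dots,a_{N-1,j}$, whose mean is $j/N$, gives $\sum_k f(a_{k,j})\ge N f(j/N)$; summing over $j$ yields $E_f(X)\ge N\sum_{j=1}^{N-1}f(j/N)=E_f(C_N)$, which is \eqref{eq:equispace}. (The case $f(0)=+\infty$ is covered by the same argument, as all the $a_{k,j}$ lie in $(0,1)$.)
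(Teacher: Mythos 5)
Your proposal is essentially the route the paper itself takes: the paper does not prove Proposition \ref{prop:equispaced} but cites Fej\'er-type convexity results for (i), Cohn--Kumar \cite{CK07} (the $d=1$ case) for (ii), and calls (iii) a simple exercise, and your arguments for (i) and (iii) are precisely those classical proofs, carried out correctly. For (iii) the exponential-sum identity, the nonnegativity of the $m\ge1$ terms, and the vanishing of $P_m(C_N)$ off multiples of $N$ are all fine (and, as the paper notes, $a_m\ge0$ for $m\ge1$ plus pointwise convergence gives absolute convergence, so the interchange is indeed harmless). For (i) the arc decomposition with $\sum_k a_{k,j}=j$ and Jensen, together with the observation that evenness plus convexity and monotonicity on $(0,1/2]$ force convexity on $(0,1)$, is the standard Fej\'er argument and is correct; the only blemish is the parenthetical dismissal of coinciding points, which as stated proves nothing (the inequality $f(0)\ge f(m/N)$ alone does not bound the mixed configuration), but it is trivially patched either by extending convexity to $[0,1)$ using continuity of $f$ at $0$, or by a perturbation/limiting argument, or by noting that in the singular case coinciding points give infinite energy. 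For (ii), your Bernstein reduction and the identity $\sum_{i,j}|u_i+u_j|^{2k}=\sum_{|m|\le k}\binom{2k}{k-m}|P_m(X)|^2$ correctly dispose of the monomials of degree $k\le N-1$, and you rightly diagnose that the degrees $k\ge N$ cannot be handled by discarding terms and require the Delsarte--Yudin magic function, i.e.\ exactly the universal optimality of the regular $N$-gon from Cohn--Kumar; since the paper also simply cites that theorem for (ii), deferring this construction to the literature is not a gap relative to the paper, though it does mean your write-up of (ii) is not self-contained. In short: correct, same approach, with one easily repaired sloppy remark in (i) and the hard half of (ii) outsourced in the same way the paper outsources it.
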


Condition (i) is classical and goes back to \cite{Fej56}, see also \cite{BHS12}. Condition (ii) is a special case $d=1$ of the celebrated theorem of Cohn and Kumar \cite{CK07} on universal optimality on the sphere $\mathbb S^d$.  It is not hard to see that these conditions overlap, but do not imply each other. Observe that these two conditions apply to all values of $N \in \mathbb N$ simultaneously. In fact, for a given value of $N$,  the positivity of $h^{(k)}$ in condition (ii) only needs to be checked up to order $k \leq N-1$. Condition (iii) is a simple calculus exercise (positivity of the coefficients even implies that the Fourier series converges absolutely and uniformly). \textcolor{black}{Fejér kernels
$$
f^{}_\nu(t) = \frac1\nu \left(\frac{\sin(\nu\pi t)}{\sin(\pi t)}\right)^2 = 1 + 2 \sum_{m = 1}^{\nu-1} \left(1-\frac m\nu\right) \cos(2\pi mt)
$$
are examples of potentials for which (iii) holds, but neither (i) nor (ii) applies for a fixed $N \geq \nu \geq3$.} 

None of the conditions (i)--(iii) are  necessary.  Positive definiteness of $f$ (equivalent to positivity of the coefficients in the cosine Fourier series) is a simple necessary condition for \eqref{eq:equispace} to hold  for all $N$ large enough,  
but it is not sufficient.


If  \eqref{eq:equispace} holds, i.e.  $N$ equispaced points in $\bbT$ minimize the energy $E_f$, the potential $f$ will be called \emph{$N$-equispacing}. As an example, the aforementioned potential \eqref{eq:qmc_potential} arising in quasi-Monte Carlo integration is $N$-equispacing for all $N \in \bbN$ and for all $p > 0$ as it fulfills both (i) and (ii) (as follows from \cite[equation (13)]{Leh85}) above.



\subsection{Sets with a single distance vector}

Since tensor product energies depend on distances in individual coordinates independently, rather than on actual distances between points, it is natural to consider a vector-valued analogue of distances.   For points $\bx, \by \in \bbT^d$ define the \emph{distance vector} between them as 
$$
\dv(\bx, \by) = (\delta(x_{\tau(1)}, y_{\tau(1)}), \dots, \delta(x_{\tau(d)}, y_{\tau(d)})),
$$
where $\delta$ denotes the geodesic distance on $\mathbb T$ as in \eqref{eq:geod_dist} and $\tau$ is a permutation of $\{1, \dots, d\}$ such that
$$
\delta(x_{\tau(1)}, y_{\tau(1)}) \leq \dots \leq \delta(x_{\tau(d)}, y_{\tau(d)}),
$$
i.e. we take the geodesic distance in every component and sort them increasingly. For $X = \{\bx^k\}_{k=0}^{N-1} \sbse \bbT^d$ let
$$
\dv(X) = \{\dv(\bx^k, \bx^\ell): k \neq \ell\}
$$
be the set of all distance vectors among points in $X$. We shall be interested in  Latin hypercube sets $X \sbse \bbT^d$ satisfying
$$
\#\dv(X) = 1,
$$
i.e. having only {\emph{a single distance vector}}.  We remark that such sets are natural tensor product analogues  of one-distance sets which have been well studied in  other geometric contexts, e.g. on spheres (simplices with center of mass in $\mathbf 0$) and projective spaces (equiangular tight frames), see \cite{CKM16}. However, this tensor product setting does not seem to have previously  appeared in the literature.

Since for the tensor product energy $E_F$ the interaction between two points only depends on the distance vector between them, we see that for a set $X \sbse \bbT^d, \#X = N$ with a single distance vector $\{(\rho_1, \dots, \rho_d)\} = \dv(X)$ we have
\begin{align} \label{eq:uds_energy}
    E_F(X) = (N^2-N) \prod_{i=1}^d f(\rho_i).
\end{align}
The following lemma counts the frequency of different entries in  the common distance vector in  such a set.  The main point of this computation is to show that the frequency distribution coincides with the distribution of distances in $d$ copies of $N$ equispaced points  in   $\mathbb T$.

\begin{lem} \label{lem:double_counting}
    Let $X \sbse \bbT^d, \#X=N$ be a Latin hypercube set with a  single distance vector, whose common distance vector is given by $\{(\rho_1, \dots, \rho_d)\} = \dv(X)$. Then for every $r = 1, \dots, \lfloor N/2 \rfloor$ it holds
    $$
    \#\left\{i=1, \dots, d: \rho_i = \frac rN\right\} =  \begin{cases}
        \frac{2d}{N-1}, & r < \frac N2, \\
        \frac d{N-1}, &  r = \frac N2.
    \end{cases}
    $$
\end{lem}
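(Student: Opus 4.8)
The plan is a double-counting argument over triples $(k,\ell,i)$ with $k\neq\ell$ and $\delta(x_i^k,x_i^\ell)=r/N$, exploiting the two defining properties of $X$ one after the other.

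First I would record the only structural facts needed. Since $X$ is a Latin hypercube set, for each fixed coordinate $i$ the values $\{x_i^0,\dots,x_i^{N-1}\}$ are exactly the equispaced points $\{0,1/N,\dots,(N-1)/N\}$, merely listed in some permuted order; in particular the \emph{multiset} of pairwise geodesic distances in the $i$-th coordinate does not depend on the underlying permutation. A direct count on the circle $\mathbb T$ shows that for a fixed $a\in\{0,\dots,N-1\}$ there are exactly two $b\neq a$ with $\delta(a/N,b/N)=r/N$ when $1\le r<N/2$, and exactly one such $b$ when $r=N/2$ (the latter case occurring only when $N$ is even). Hence, for each fixed $i$, the number of ordered pairs $(k,\ell)$ with $k\neq\ell$ and $\delta(x_i^k,x_i^\ell)=r/N$ equals $2N$ if $r<N/2$ and $N$ if $r=N/2$; summing over the $d$ coordinates, the set $T_r=\{(k,\ell,i):k\neq\ell,\ \delta(x_i^k,x_i^\ell)=r/N\}$ has $\#T_r=2Nd$ when $r<N/2$ and $\#T_r=Nd$ when $r=N/2$.

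Next I would invoke the single-distance-vector hypothesis. For any $k\neq\ell$ the sorted vector of coordinate-wise geodesic distances between $\bx^k$ and $\bx^\ell$ is $(\rho_1,\dots,\rho_d)$, so the multiset $\{\delta(x_1^k,x_1^\ell),\dots,\delta(x_d^k,x_d^\ell)\}$ equals $\{\rho_1,\dots,\rho_d\}$. Writing $m_r=\#\{i:\rho_i=r/N\}$, this means that for \emph{every} ordered pair $(k,\ell)$ with $k\neq\ell$ the number of coordinates $i$ with $\delta(x_i^k,x_i^\ell)=r/N$ is exactly $m_r$. Counting $T_r$ the other way, by first summing over the $N(N-1)$ ordered pairs, therefore gives $\#T_r=N(N-1)\,m_r$. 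Equating with the previous count yields $m_r=\tfrac{2d}{N-1}$ when $r<N/2$ and $m_r=\tfrac{d}{N-1}$ when $r=N/2$, which is the asserted formula. As a consistency check, $\sum_{r=1}^{\lfloor N/2\rfloor}m_r=d$, matching the length of the distance vector.

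There is essentially no serious obstacle: the argument is elementary double counting. The only points requiring a little care are the observation that the per-coordinate distance count is independent of the permutations $\sigma_2,\dots,\sigma_d$, so that each of the $d$ coordinates contributes equally to $\#T_r$, and keeping the antipodal case $r=N/2$ (possible only for even $N$) separate from $r<N/2$, since an antipodal pair on $\mathbb T$ is counted with multiplicity one rather than two.
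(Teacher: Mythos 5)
Your proof is correct and is essentially the same double-counting argument as in the paper: the paper counts the entries equal to $r/N$ in the array $a_{(k,\ell),i}=\delta(\sigma_i(k)/N,\sigma_i(\ell)/N)$ by rows (using the single-distance-vector property) and by columns (using the Latin hypercube/permutation structure), which is exactly your count of the triples $(k,\ell,i)$ in the two orders. The only difference is presentational, so there is nothing to add.
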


\begin{proof}
    We will show, via double counting, that
    \begin{align*}
        & (N^2-N) \cdot \#\left\{i=1, \dots, d: \rho_i = \frac rN\right\} \\
        = & d \cdot \#\left\{(k, \ell) \in \{0, 1, \dots, N-1\}^2: \delta\left(\frac kN, \frac \ell N\right) = \frac rN\right\},
    \end{align*}
    from which the claim follows since
    $$
    \#\left\{(k, \ell) \in \{0, 1, \dots, N-1\}^2: \delta\left(\frac kN, \frac \ell N\right) = \frac rN\right\} = \begin{cases}
        2N, &  r < \frac N2, \\
        N ,&  r = \frac N2.
    \end{cases}
    $$
    Since $X$ is a Latin hypercube, there are permutations $\sigma_1, \dots, \sigma_d$ 
    such that 
    $$
    X = \{\bx^k\}_{k=0}^{N-1} = \left\{\left(\frac{\sigma_1(k)}N, \dots, \frac{\sigma_d(k)}N\right): k=0, 1, \dots, N-1\right\}
    $$
    (with $\sigma_1 = \id$ the identity on $\{0, 1, \dots, N-1\}$). Consider the array $A = [a_{(k, \ell), i}]$ with rows indexed by $(k, \ell) \in \{0, 1, \dots, N-1\}^2, k \neq \ell$ and columns indexed by $i=1, \dots, d$, with entries
    $$
    a_{(k, l), i} = \delta\left(\frac{\sigma_i(k)}N, \frac{\sigma_i(\ell)}N\right).
    $$
    We will count for how many entries it holds $a_{(k, \ell), i} = \frac rN$. The row corresponding to $(k, \ell)$ consists, by definition, of the entries of $\dv(\bx^k, \bx^\ell)$ in some order. By the single distance property, this row contains exactly
    $$
    \#\left\{i=1, \dots, d: \rho_i = \frac rN\right\}
    $$
    such entries, so that there are
    $$
    (N^2-N) \cdot \#\left\{i=1, \dots, d: \rho_i = \frac rN\right\}
    $$
    in total of them in $A$. On the other hand, since $\sigma_i$ is a permutation, in the $i$-th column we have
    \begin{align*}
        & \#\left\{(k, \ell) \in \{0, 1, \dots, N-1\}^2: \delta\left(\frac {\sigma_i(k)}N, \frac {\sigma_i(\ell)}N\right) = \frac rN\right\} \\
        = & \#\left\{(k, \ell) \in \{0, 1, \dots, N-1\}^2: \delta\left(\frac {k}N, \frac {\ell}N\right) = \frac rN\right\},
    \end{align*}
    so in all of $A$ there are
    $$
    d \cdot \#\left\{(k, \ell) \in \{0, 1, \dots, N-1\}^2: \delta\left(\frac {k}N, \frac {\ell}N\right) = \frac rN\right\}
    $$
    such entries. This finishes the proof.
\end{proof}

For an $N$-point Latin hypercube set with a single distance vector to exist in $\bbT^d$ one thus needs to fulfill the divisibility condition
\begin{align} \label{eq:div}
    \frac{N-1}{\gcd(2, N-1)} \bigg| d,
\end{align}
in particular for a given dimension $d$ they can only exist up to $N \leq 2d+1$ points.

\subsection{Latin hypercube sets with a single distance vector as energy minimizers}

The following is the main energy minimization result of this paper. It states  that as long as $\log f$ is $N$-equispacing, then an  $N$-point Latin hypercube set with a single distance vector  $X\subset \mathbb T^d$ minimizes  the corresponding tensor product energy $E_F$.

\begin{thm} \label{thm:optimality}
    Let $X \sbse \bbT^d, \#X = N$ be a Latin hypercube set with a single distance vector. Let $f: \bbT \rightarrow (0, \infty)$ be a potential, such that  the potential $g = \log f: \bbT \rightarrow \bbR$ is $N$-equispacing. Then 
    $$
    E_F(Y) \geq E_F(X)
    $$
    for all $Y \sbse \bbT^d, \#Y = N$.
\end{thm}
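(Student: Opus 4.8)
The plan is to pass to logarithms, so that the tensor product structure of $F$ turns the product into a sum, and then to decouple the $d$ coordinates by a single application of Jensen's inequality (convexity of $\exp$). This reduces the problem to the one–dimensional energies $E_g$ with $g = \log f$ in each coordinate, to which the $N$-equispacing hypothesis on $g$ applies directly; the combinatorial Lemma~\ref{lem:double_counting} is then exactly what is needed to see that the resulting lower bound is attained precisely at $X$.

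In detail, write $M = N^2 - N$ and $f = e^g$. For an arbitrary $Y = \{\by^k\}_{k=0}^{N-1} \sbse \bbT^d$ with $\#Y = N$, I would first use the arithmetic--geometric mean inequality applied to the $M$ positive summands to obtain
\begin{align*}
E_F(Y) &= \sum_{k \neq \ell} \exp\left(\sum_{i=1}^d g(y_i^k - y_i^\ell)\right) \geq M \exp\left(\frac{1}{M}\sum_{i=1}^d \sum_{k \neq \ell} g(y_i^k - y_i^\ell)\right) \\
&= M \exp\left(\frac{1}{M}\sum_{i=1}^d E_g\left(y_i^0, \dots, y_i^{N-1}\right)\right).
\end{align*}
Since $g$ is $N$-equispacing, each coordinate energy satisfies $E_g(y_i^0, \dots, y_i^{N-1}) \geq E_g^*$, where $E_g^* := E_g\left(\tfrac0N, \tfrac1N, \dots, \tfrac{N-1}N\right)$, and therefore $E_F(Y) \geq M\exp\left(d\,E_g^*/M\right)$, a quantity that does not depend on $Y$.

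It then remains to identify this lower bound with $E_F(X)$. On one hand, by \eqref{eq:uds_energy}, if $\{(\rho_1, \dots, \rho_d)\} = \dv(X)$ then $E_F(X) = M\prod_{i=1}^d f(\rho_i) = M\exp\left(\sum_{i=1}^d g(\rho_i)\right)$, so it suffices to verify $\sum_{i=1}^d g(\rho_i) = d\,E_g^*/M$. Grouping the distances among $N$ equispaced points by their common value gives $E_g^* = \sum_{r=1}^{\lfloor N/2\rfloor} c_r\, g(r/N)$ with $c_r = 2N$ for $r < N/2$ and $c_r = N$ for $r = N/2$; on the other hand, Lemma~\ref{lem:double_counting} states precisely that $\#\{i : \rho_i = r/N\} = \tfrac{d}{M}\,c_r$. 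Hence $\sum_{i=1}^d g(\rho_i) = \tfrac{d}{M}\sum_{r=1}^{\lfloor N/2\rfloor} c_r\, g(r/N) = \tfrac{d}{M}\,E_g^*$, as needed.

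The only genuinely substantive point in this argument is the equality case, i.e. that the lower bound is actually attained. Equality in the AM--GM step holds exactly because $X$ has a single distance vector, so all $M$ of its pairwise interactions $\exp\left(\sum_{i=1}^d g(\rho_i)\right)$ coincide; and matching the resulting constant $d\,E_g^*/M$ with the average of the coordinatewise logarithms over $X$ is precisely the content of the double-counting Lemma~\ref{lem:double_counting}. No regularity of $f$ is used beyond $f$ being positive and finite: finiteness of $g$ is what legitimizes the AM--GM step, which is why the statement excludes singular potentials (for which $g(0) = +\infty$ and the argument would need modification).
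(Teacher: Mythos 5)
Your proposal is correct and follows essentially the same route as the paper's proof: the AM--GM step on the $N^2-N$ positive terms is exactly the paper's application of Jensen's inequality to $\exp$, the coordinatewise use of the $N$-equispacing hypothesis for $g=\log f$ is identical, and Lemma~\ref{lem:double_counting} is invoked in the same way to identify the resulting $Y$-independent lower bound with $E_F(X)$ via \eqref{eq:uds_energy}. The only cosmetic difference is that you phrase the final identification as an ``equality case'' check, whereas the paper simply rewrites the averaged sum directly in terms of the single distance vector.
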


According to Proposition \ref{prop:equispaced}, the result of Theorem \ref{thm:optimality} can be applied  if $\log f$ is decreasing and  convex (i.e. $f$ is logarithmically convex)  as a function of geodesic distance or if $\log f (t)$ is absolutely monotone when  rewritten as a function of $s=\cos (2\pi t) $, see e.g. \cite{GQ10} for more information on logarithmic absolute monotonicity. 

Thus, as mentioned earlier, this result can be viewed as an analogue of the classical universal optimality results on spheres and projective spaces \cite{CK07, CKM16} in the sense that one configuration minimizes  the energy for a large class of potentials. In fact, in \cite{CK07} universal optimality was  shown  using the linear programming method for {\emph{sharp configurations}}, i.e. sets which are spherical designs of high order and have few pairwise distances. The sets with a single distance vector have an extreme version of the latter property in the tensor product sense. 


\begin{proof}[Proof of Theorem \ref{thm:optimality}]
    Let $Y = \{\by^0, \by^1, \dots, \by^{N-1}\} \sbse \bbT^d, \#Y = N$. Since the exponential function is convex, we can apply Jensen's inequality to obtain
    \begin{align*}
        E_F(Y) & = \sum_{\substack{k, \ell = 0 \\ k \neq \ell}}^{N-1} \prod_{i=1}^d f(y_i^k-y_i^\ell) = \sum_{\substack{k, \ell = 0 \\ k \neq \ell}}^{N-1} \exp\left(\sum_{i=1}^d g(y_i^k-y_i^\ell)\right) \\
        & \geq (N^2-N) \exp\left(\frac1{N^2-N} \sum_{\substack{k, \ell = 0 \\ k \neq \ell}}^{N-1} \sum_{i=1}^d g(y_i^k-y_i^\ell) \right).
    \end{align*}
    By assumption, for all $i=1, \dots, d$ we have
    \begin{align*}
        \sum_{\substack{k, \ell = 0 \\ k \neq \ell}}^{N-1} g(y_i^k-y_i^\ell) & \geq \sum_{\substack{k, \ell = 0 \\ k \neq \ell}}^{N-1} g\left(\frac kN - \frac \ell N\right) \\
    & = \begin{cases}
        2N \sum_{r=1}^{(N-1)/2} g\left(\frac rN\right), &  N \text{ odd}, \\
        2N \sum_{r=1}^{(N-2)/2} g\left(\frac rN\right) + Ng\left(\frac N2\right), &  N \text{ even}
    \end{cases}
    \end{align*}
    and by Lemma \ref{lem:double_counting} thus
    \begin{align*}
        \frac1{N^2-N} \sum_{\substack{k, \ell = 0 \\ k \neq \ell}}^{N-1} \sum_{i=1}^d g(y_i^k-y_i^\ell) & \geq \frac d{N-1} \begin{cases}
        2\sum_{k=1}^{(N-1)/2} g\left(\frac kN\right) ,&  N \text{ odd}, \\
        2\sum_{k=1}^{(N-2)/2} g\left(\frac kN\right) + g\left(\frac N2\right), &  N \text{ even}
    \end{cases} \\
    & = \sum_{r=1}^{\lfloor N/2 \rfloor} \#\left\{i=1, \dots, d: \rho_i = \frac rN\right\} \cdot g\left(\frac rN\right),
    \end{align*}
    where $\{(\rho_1, \dots, \rho_d)\} = \dv(X)$ is the unique distance vector in $X$. In total we get
    $$
    E_F(Y) \geq (N^2-N) \exp\left(\sum_{i=1}^d g(\rho_i)\right) = E_F(X)
    $$
    by \eqref{eq:uds_energy}, finishing the proof.
\end{proof}

We may readily apply this result to the potential \eqref{eq:qmc_potential} arising from QMC integration. As was already observed in \cite{HO16}, the potential $f_p$ is logarithmically convex as long as $p \leq 6$. Combining condition (i) of  Proposition \ref{prop:equispaced}  and Theorem \ref{thm:optimality} we obtain the following.

\begin{cor} \label{cor:opt_qmc}
    Let $X \sbse \bbT^d, \#X = N$ be a Latin hypercube set with a single distance vector. Then $X$ is a minimizer for the energy given by the potential
    $$
    f_p(t) = 1 + \frac p2 \left(\frac16 - t + t^2\right)
    $$
    for all $0 < p \leq 6$.
\end{cor}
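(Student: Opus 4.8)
The plan is to combine the two ingredients that are already available in the excerpt: the logarithmic convexity of $f_p$ for $p\le 6$, and the universal-optimality result of Theorem~\ref{thm:optimality}. The deduction is short. First I would recall that, by a direct computation already carried out in \cite{HO16}, the function $t\mapsto f_p(t)=1+\tfrac p2(\tfrac16-t+t^2)$ is positive on $\bbT$ and logarithmically convex on $(0,1/2]$ precisely when $0<p\le 6$; moreover $f_p$ is clearly decreasing on $(0,1/2]$ since $\tfrac16-t+t^2$ is decreasing there. Hence $g=\log f_p$ is convex and decreasing on $(0,1/2]$, and as a $1$-periodic even function it depends only on the geodesic distance $\delta$.

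Next I would invoke condition (i) of Proposition~\ref{prop:equispaced}: a potential that is convex and decreasing on $(0,1/2]$ is $N$-equispacing for every $N\in\bbN$. Applying this to $g=\log f_p$ shows that $\log f_p$ is $N$-equispacing for all $N$, in particular for $N=\#X$. The hypotheses of Theorem~\ref{thm:optimality} are then met verbatim: $X$ is a Latin hypercube set with a single distance vector, and $\log f_p$ is $N$-equispacing. Theorem~\ref{thm:optimality} immediately yields $E_{F_p}(Y)\ge E_{F_p}(X)$ for every $N$-point configuration $Y\subseteq\bbT^d$, which is the claim, with $F_p(t_1,\dots,t_d)=\prod_{i=1}^d f_p(t_i)$.

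There is really no substantive obstacle here; the only point requiring a sentence of justification is the logarithmic convexity of $f_p$ for $p\le 6$, and even that is cited rather than reproved. If one wanted a self-contained argument one would write $\log f_p(t)=\log(1+\tfrac p2 q(t))$ with $q(t)=\tfrac16-t+t^2$, compute $(\log f_p)''$, and check that on $(0,1/2]$ the sign condition $f_p f_p''\ge (f_p')^2$ reduces to an inequality that holds exactly when $p\le 6$ — a routine one-variable calculus verification that I would relegate to a footnote or to the Appendix, since the constant $6$ is exactly the threshold making $f_6=3f$ with $f$ the Warnock potential. The whole proof is thus a two-line corollary.

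\begin{proof}[Proof of Corollary~\ref{cor:opt_qmc}]
    Fix $0<p\le 6$. As observed in \cite{HO16}, the function $f_p$ is positive on $\bbT$ and logarithmically convex on $(0,1/2]$ in this range of $p$; it is moreover decreasing on $(0,1/2]$, since $q(t)=\tfrac16-t+t^2$ is decreasing on $(0,1/2]$ and $p>0$. Consequently $g=\log f_p$, regarded as an even $1$-periodic function, is convex and decreasing on $(0,1/2]$. By condition~(i) of Proposition~\ref{prop:equispaced}, $g$ is $N$-equispacing for every $N\in\bbN$, and in particular for $N=\#X$. Thus the hypotheses of Theorem~\ref{thm:optimality} are satisfied, and we conclude $E_{F_p}(Y)\ge E_{F_p}(X)$ for all $Y\sbse\bbT^d$ with $\#Y=N$, where $F_p(t_1,\dots,t_d)=\prod_{i=1}^d f_p(t_i)$.
\end{proof}
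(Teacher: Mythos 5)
Your proposal is correct and follows essentially the same route as the paper: the paper also deduces the corollary by citing the logarithmic convexity of $f_p$ for $p\le 6$ from \cite{HO16}, noting monotonicity, and then combining condition (i) of Proposition~\ref{prop:equispaced} with Theorem~\ref{thm:optimality}. Your extra remarks on positivity of $f_p$ and on how one could verify the log-convexity threshold directly are fine but not needed beyond what the paper records.
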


 \begin{figure}[htp]
						\begin{tikzpicture}
							\begin{axis}[
								plot box ratio = 1 1,
								xmin=-1/21,xmax=22/21,ymin=-1/21,ymax=22/21,
								font=\footnotesize,
								height=0.3\textwidth,
								width=0.3\textwidth,
								xticklabel = \empty,
								yticklabel = \empty,
								ytick style={draw=none},
								xtick style={draw=none},
								]
								\addplot[only marks,black,mark=*,mark size=2pt,mark options={solid}] coordinates {
									(0, 0) (1/5, 3/5) (2/5, 1/5) (3/5, 4/5) (4/5, 2/5)
								};
								\addplot[gray,mark options={solid}] coordinates {
									(0, -0.05) (0, 1.05)
								};
								\addplot[gray,mark options={solid}] coordinates {
									(1.0, -0.05) (1.0, 1.05)
								};
								\addplot[gray,mark options={solid}] coordinates {
									(-0.05, 0) (1.05, 0)
								};
								\addplot[gray,mark options={solid}] coordinates {
									(-0.05, 1.0) (1.05, 1.0)
								};
								\addplot[gray,mark options={solid}, dotted] coordinates {
									(0, 1/5) (1, 1/5)
								};
								\addplot[gray,mark options={solid}, dotted] coordinates {
									(0, 2/5) (1, 2/5)
								};
								\addplot[gray,mark options={solid}, dotted] coordinates {
									(1/5, 0) (1/5, 1)
								};
								\addplot[gray,mark options={solid}, dotted] coordinates {
									(2/5, 0) (2/5, 1)
								};
								\addplot[gray,mark options={solid}, dotted] coordinates {
									(0, 3/5) (1, 3/5)
								};
								\addplot[gray,mark options={solid}, dotted] coordinates {
									(0, 4/5) (1, 4/5)
								};
								\addplot[gray,mark options={solid}, dotted] coordinates {
									(3/5, 0) (3/5, 1)
								};
								\addplot[gray,mark options={solid}, dotted] coordinates {
									(4/5, 0) (4/5, 1)
								};
							\end{axis}
						\end{tikzpicture}
						\begin{tikzpicture}
							\begin{axis}[
								plot box ratio = 1 1,
								xmin=-1/21,xmax=22/21,ymin=-1/21,ymax=22/21,
								font=\footnotesize,
								height=0.3\textwidth,
								width=0.3\textwidth,
								xticklabel = \empty,
								yticklabel = \empty,
								ytick style={draw=none},
								xtick style={draw=none},
								]
								\addplot[only marks,black,mark=*,mark size=2pt,mark options={solid}] coordinates {
									(0, 0) (1/8, 5/8) (2/8, 2/8) (3/8, 7/8) (4/8, 4/8) (5/8, 1/8) (6/8, 6/8) (7/8, 3/8)
								};
								\addplot[gray,mark options={solid}] coordinates {
									(0, -0.05) (0, 1.05)
								};
								\addplot[gray,mark options={solid}] coordinates {
									(1.0, -0.05) (1.0, 1.05)
								};
								\addplot[gray,mark options={solid}] coordinates {
									(-0.05, 0) (1.05, 0)
								};
								\addplot[gray,mark options={solid}] coordinates {
									(-0.05, 1.0) (1.05, 1.0)
								};
								\addplot[gray,mark options={solid}, dotted] coordinates {
									(0, 1/8) (1, 1/8)
								};
								\addplot[gray,mark options={solid}, dotted] coordinates {
									(0, 2/8) (1, 2/8)
								};
								\addplot[gray,mark options={solid}, dotted] coordinates {
									(1/8, 0) (1/8, 1)
								};
								\addplot[gray,mark options={solid}, dotted] coordinates {
									(2/8, 0) (2/8, 1)
								};
								\addplot[gray,mark options={solid}, dotted] coordinates {
									(0, 3/8) (1, 3/8)
								};
								\addplot[gray,mark options={solid}, dotted] coordinates {
									(0, 4/8) (1, 4/8)
								};
								\addplot[gray,mark options={solid}, dotted] coordinates {
									(3/8, 0) (3/8, 1)
								};
								\addplot[gray,mark options={solid}, dotted] coordinates {
									(4/8, 0) (4/8, 1)
								};
								\addplot[gray,mark options={solid}, dotted] coordinates {
									(0, 5/8) (1, 5/8)
								};
								\addplot[gray,mark options={solid}, dotted] coordinates {
									(0, 6/8) (1, 6/8)
								};
								\addplot[gray,mark options={solid}, dotted] coordinates {
									(5/8, 0) (5/8, 1)
								};
								\addplot[gray,mark options={solid}, dotted] coordinates {
									(6/8, 0) (6/8, 1)
								};
								\addplot[gray,mark options={solid}, dotted] coordinates {
									(0, 7/8) (1, 7/8)
								};
								\addplot[gray,mark options={solid}, dotted] coordinates {
									(7/8, 0) (7/8, 1)
								};
							\end{axis}
						\end{tikzpicture}
						\begin{tikzpicture}
							\begin{axis}[
								plot box ratio = 1 1,
								xmin=-1/21,xmax=22/21,ymin=-1/21,ymax=22/21,
								font=\footnotesize,
								height=0.3\textwidth,
								width=0.3\textwidth,
								xticklabel = \empty,
								yticklabel = \empty,
								ytick style={draw=none},
								xtick style={draw=none},
								]
								\addplot[only marks,black,mark=*,mark size=2pt,mark options={solid}] coordinates {
									(0/13, 0/13) (1/13, 8/13) (2/13, 3/13) (3/13, 11/13) (4/13, 6/13) (5/13, 1/13) (6/13, 9/13) (7/13, 4/13) (8/13, 12/13) (9/13, 7/13) (10/13, 2/13) (11/13, 10/13) (12/13, 5/13)
								};
								\addplot[gray,mark options={solid}] coordinates {
									(0, -0.05) (0, 1.05)
								};
								\addplot[gray,mark options={solid}] coordinates {
									(1.0, -0.05) (1.0, 1.05)
								};
								\addplot[gray,mark options={solid}] coordinates {
									(-0.05, 0) (1.05, 0)
								};
								\addplot[gray,mark options={solid}] coordinates {
									(-0.05, 1.0) (1.05, 1.0)
								};
								\addplot[gray,mark options={solid}, dotted] coordinates {
									(0, 1/13) (1, 1/13)
								};
								\addplot[gray,mark options={solid}, dotted] coordinates {
									(0, 2/13) (1, 2/13)
								};
								\addplot[gray,mark options={solid}, dotted] coordinates {
									(1/13, 0) (1/13, 1)
								};
								\addplot[gray,mark options={solid}, dotted] coordinates {
									(2/13, 0) (2/13, 1)
								};
								\addplot[gray,mark options={solid}, dotted] coordinates {
									(0, 3/13) (1, 3/13)
								};
								\addplot[gray,mark options={solid}, dotted] coordinates {
									(0, 4/13) (1, 4/13)
								};
								\addplot[gray,mark options={solid}, dotted] coordinates {
									(3/13, 0) (3/13, 1)
								};
								\addplot[gray,mark options={solid}, dotted] coordinates {
									(4/13, 0) (4/13, 1)
								};
								\addplot[gray,mark options={solid}, dotted] coordinates {
									(0, 5/13) (1, 5/13)
								};
								\addplot[gray,mark options={solid}, dotted] coordinates {
									(0, 6/13) (1, 6/13)
								};
								\addplot[gray,mark options={solid}, dotted] coordinates {
									(5/13, 0) (5/13, 1)
								};
								\addplot[gray,mark options={solid}, dotted] coordinates {
									(6/13, 0) (6/13, 1)
								};
								\addplot[gray,mark options={solid}, dotted] coordinates {
									(0, 7/13) (1, 7/13)
								};
								\addplot[gray,mark options={solid}, dotted] coordinates {
									(7/13, 0) (7/13, 1)
								};
								\addplot[gray,mark options={solid}, dotted] coordinates {
									(0, 8/13) (1, 8/13)
								};
								\addplot[gray,mark options={solid}, dotted] coordinates {
									(0, 9/13) (1, 9/13)
								};
								\addplot[gray,mark options={solid}, dotted] coordinates {
									(8/13, 0) (8/13, 1)
								};
								\addplot[gray,mark options={solid}, dotted] coordinates {
									(9/13, 0) (9/13, 1)
								};
								\addplot[gray,mark options={solid}, dotted] coordinates {
									(0, 10/13) (1, 10/13)
								};
								\addplot[gray,mark options={solid}, dotted] coordinates {
									(0, 11/13) (1, 11/13)
								};
								\addplot[gray,mark options={solid}, dotted] coordinates {
									(10/13, 0) (10/13, 1)
								};
								\addplot[gray,mark options={solid}, dotted] coordinates {
									(11/13, 0) (11/13, 1)
								};
								\addplot[gray,mark options={solid}, dotted] coordinates {
									(0, 12/13) (1, 12/13)
								};
								\addplot[gray,mark options={solid}, dotted] coordinates {
									(12/13, 0) (12/13, 1)
								};
							\end{axis}
						\end{tikzpicture}
\caption{The $5$-, $8$- and $13$-point Fibonacci lattices.}
\label{fig:fib_lats}
\end{figure}
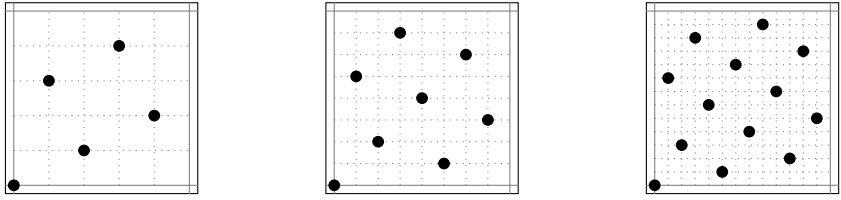

Of course, Theorem \ref{thm:optimality} is only valuable if we can demonstrate that sufficiently many sets of this form exist. We dedicate the next three subsections to constructing a large number of such sets, starting with the case $d=2$ with particular emphasis on $N=5$, the largest $N$ for which there is such a set in $\mathbb{T}^2$ (see \eqref{eq:div}).  

\subsection{Optimality of the $3$- and $5$-point Fibonacci lattices}

We observe that the 
$5$-point Fibonacci lattice
$$
\Phi_5 = \left\{\left(\frac05, \frac05\right), \left(\frac15, \frac35\right), \left(\frac25, \frac15\right), \left(\frac35, \frac45\right), \left(\frac45, \frac25\right)\right\}
$$
has a single distance vector $\left(\frac15, \frac25 \right)$, see Figure \ref{fig:fib_lats}, in particular $
E_F(\Phi_5) = 20 f\left(\frac15\right) f\left(\frac25\right)
$
as in \eqref{eq:uds_energy}. Therefore, Theorem \ref{thm:optimality} applies in this case and we immediately obtain optimality of this configuration for a variety of potentials.


\begin{cor}\label{cor:5}
    If  $\log f$ is $5$-equispacing, then the  $5$-point Fibonacci lattice $\Phi_5$ is an energy minimizer for the corresponding energy $E_F$. In particular, this holds if  (see Proposition \ref{prop:equispaced})
    \begin{itemize}
    \item $f$ is decreasing and logarithmically convex on $(0,1/2]$; or
    \item $f (t) = h (\cos (2\pi t))$ and $\log h$ is absolutely monotone on $[-1,1)$. 
    \end{itemize}
\end{cor}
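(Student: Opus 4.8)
The plan is to read this off as a direct specialization of Theorem \ref{thm:optimality} with $d=2$, $N=5$ and $X=\Phi_5$; the only genuinely new ingredient is the verification that $\Phi_5$ is a Latin hypercube set with a single distance vector, and the rest is transferring the sufficient conditions of Proposition \ref{prop:equispaced} from $f$ to $g=\log f$.

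First I would note that $\Phi_5$, being a Fibonacci lattice (hence an integration lattice, hence a permutation set), is a Latin hypercube set in the sense of Definition \ref{def:lat_hypcube} with $d=2$. Next I would check the single-distance-vector property directly: one computes $\dv(\bx^k,\bx^\ell)$ for each of the $\binom{5}{2}=10$ pairs of distinct points of $\Phi_5$ and observes that every one equals $(1/5,2/5)$. This is a short finite computation — using that the second coordinate is $3k \bmod 5$ and that for a nonzero residue $r$ one has $\delta(r/5)=\min(r,5-r)/5$, so the geodesic distances in the two coordinates always form the pair $\{1/5,2/5\}$ — and it has in effect already been recorded in the paragraph preceding the corollary, so I would simply cite it. By \eqref{eq:uds_energy} this also gives $E_F(\Phi_5)=20\,f(1/5)\,f(2/5)$, but the essential point is that $\Phi_5$ now satisfies the hypotheses of Theorem \ref{thm:optimality}.

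Consequently, if $g=\log f$ is $5$-equispacing, Theorem \ref{thm:optimality} yields $E_F(Y)\ge E_F(\Phi_5)$ for every $5$-point set $Y\sbse\bbT^2$, which is the first claim. For the two explicit conditions I would apply Proposition \ref{prop:equispaced} to $g=\log f$: if $f$ is decreasing and logarithmically convex on $(0,1/2]$, then $g=\log f$ is decreasing and convex on $(0,1/2]$, so condition (i) gives that $g$ is $N$-equispacing for every $N$, in particular $N=5$; if $f(t)=h(\cos(2\pi t))$ with $\log h$ absolutely monotone on $[-1,1)$, then $g(t)=(\log h)(\cos(2\pi t))$ has exactly the form required by condition (ii) with the absolutely monotone function $\log h$ in place of $h$, so again $g$ is $5$-equispacing. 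In either case the first part of the corollary applies and the proof is complete.

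I do not expect a real obstacle here: the corollary is a straightforward instantiation of Theorem \ref{thm:optimality}, and the only computational content — that $\Phi_5$ has the single distance vector $(1/5,2/5)$ — is an explicit check over ten pairs that is essentially already done in the text.
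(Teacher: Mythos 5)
Your proposal is correct and matches the paper's own treatment: the corollary is obtained exactly as you describe, by noting that $\Phi_5$ is a Latin hypercube set with the single distance vector $(1/5,2/5)$ (so $E_F(\Phi_5)=20f(1/5)f(2/5)$ as in \eqref{eq:uds_energy}), invoking Theorem \ref{thm:optimality}, and then applying Proposition \ref{prop:equispaced}(i) and (ii) to $g=\log f$ for the two explicit conditions. No gaps; nothing further is needed.
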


We further observe that the $5$-point Fibonacci lattice has the lowest energy  among all other $5$-point permutation sets merely under the assumption that $f$ is decreasing  on $(0,1/2]$.

\begin{lem}\label{lem:5perm}
Assume that $f$ is decreasing  on $(0,1/2]$ (or just that $f(2/5)< f(1/5)$). 
    Then the  $5$-point Fibonacci lattice $\Phi_5$ minimizes the energy $E_F$  among all $5$-point permutation sets.
\end{lem}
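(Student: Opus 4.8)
The plan is to reduce the energy of an arbitrary $5$-point permutation set to a single nonnegative integer parameter, and then to observe that $\Phi_5$ makes that parameter equal to $0$.

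By translation invariance we may assume $X = X_\sigma = \{(k/5, \sigma(k)/5) : k = 0, \dots, 4\}$ for some permutation $\sigma$ of $\{0,1,2,3,4\}$. For distinct indices $k,\ell$ the difference $k-\ell \bmod 5$ lies in $\{1,2,3,4\}$, so $\delta(k/5-\ell/5) \in \{1/5, 2/5\}$; writing $a = f(1/5)$ and $b = f(2/5)$, the first coordinate of the pair $(\bx^k,\bx^\ell)$ contributes the factor $a$ when $k-\ell \equiv \pm 1 \pmod 5$ and the factor $b$ when $k-\ell \equiv \pm 2 \pmod 5$, and the same dichotomy holds for the second coordinate with $\sigma(k)-\sigma(\ell)$ in place of $k-\ell$ (here we use that $\sigma$ is a bijection, so $\{\sigma(k)-\sigma(\ell) \bmod 5 : k \neq \ell\}$ is the same multiset of residues as $\{k-\ell \bmod 5: k \neq \ell\}$). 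Classify the $20$ ordered pairs $(k,\ell)$ with $k \neq \ell$ by the types of their two coordinates, with counts $n_{aa}, n_{ab}, n_{ba}, n_{bb}$. Since each nonzero residue occurs as $k-\ell$ exactly $5$ times, exactly $10$ of these pairs have first coordinate of type $a$ and $10$ of type $b$, and likewise for the second coordinate; hence $n_{aa}+n_{ab} = n_{aa}+n_{ba} = 10$ and $n_{ab}+n_{bb}=10$, so all four counts are determined by the single parameter $t_\sigma := n_{aa}$, namely $n_{ab} = n_{ba} = 10 - t_\sigma$ and $n_{bb} = t_\sigma$.

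Substituting into \eqref{e.energy} then gives
\[
E_F(X_\sigma) = t_\sigma a^2 + 2(10-t_\sigma)ab + t_\sigma b^2 = 20\,ab + t_\sigma (a-b)^2 .
\]
Since $a,b \ge 0$ we have $(a-b)^2 \ge 0$, and trivially $t_\sigma \ge 0$, so $E_F(X_\sigma) \ge 20\,ab = 20\,f(1/5)f(2/5)$ for every $\sigma$. It then remains to check that $\Phi_5$ attains this lower bound: its permutation is $\sigma(k) = 3k \bmod 5$, so $\sigma(k)-\sigma(\ell) = 3(k-\ell)$, and since $3m \equiv \pm 1 \pmod 5$ exactly when $m \equiv \pm 2 \pmod 5$, whenever the first coordinate of a pair is of type $a$ its second coordinate is of type $b$; thus $t_{\Phi_5} = 0$ and $E_F(\Phi_5) = 20\,f(1/5)f(2/5)$, consistent with \eqref{eq:uds_energy} and the single distance vector $(1/5, 2/5)$. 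Hence $E_F(X_\sigma) \ge E_F(\Phi_5)$ for all $5$-point permutation sets, as claimed.

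There is no serious obstacle here: the only points that need care are the double-counting that collapses all four type-counts onto the single parameter $t_\sigma$, and the elementary verification in $\mathbb{Z}/5\mathbb{Z}$ that the Fibonacci permutation interchanges the two possible coordinate distances. I note that the inequality itself uses only $f(1/5), f(2/5) \ge 0$; the monotonicity hypothesis (or $f(2/5) < f(1/5)$) is what makes $(a-b)^2 > 0$, so that $t_\sigma = 0$ becomes a genuine constraint characterizing which competing permutation sets can match $\Phi_5$.
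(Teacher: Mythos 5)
Your proof is correct and follows essentially the same route as the paper: both reduce the energy of a permutation set to a pairing of the two coordinate-distance multisets (each coordinate contributing ten factors $f(1/5)$ and ten factors $f(2/5)$ over ordered pairs) and bound it below by $20\,f(1/5)f(2/5)$, the energy of $\Phi_5$; where the paper invokes the rearrangement inequality, you make the same step explicit through the one-parameter count $t_\sigma$ and $(a-b)^2 \ge 0$. Your ordered-pair bookkeeping is consistent with \eqref{eq:uds_energy}, and your closing remark that the monotonicity hypothesis is only needed to make the bound a genuine constraint matches the paper's observation that equality forces $f(1/5)=f(2/5)$.
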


\begin{proof}
    We note that permutation sets on $\mathbb{T}^2$ project down to equispaced points on $\mathbb{T}^1$ in both the $x$- and $y$-coordinates. The one-dimensional energy of either of these projections is $5f(1/5)+5f(2/5)$, so the energy of any permutation set in $\mathbb{T}^2$ must be some sum of the form $\sum_{i=1}^{10}a_ib_i$ where five of the $a_i$ are $f(1/5)$, five of the $a_i$ are $f(2/5)$, five of the $b_i$ are $f(1/5)$ and five of the $b_i$ are $f(2/5)$. By the rearrangement inequality \cite{HLP88} we note then that for any permutation set $X$, $$E_F(X)\ge \sum_{i=1}^{10}f(1/5)f(2/5)=10f(1/5)f(2/5),$$ with equality only holding when $f(1/5)=f(2/5)$. Since $10f(1/5)f(2/5)$ is the energy of the $5$-point Fibonacci lattice, we have the desired result. 
\end{proof}

We also notice that the $3$-point Fibonacci lattice  is   equivalent, up to symmetries,  to the diagonal lattice, consisting of points  $\left(0, 0 \right)$, $\left(\frac13, \frac13 \right)$, and   $\left(\frac23, \frac23 \right)$, and has a single distance vector $\left(\frac13, \frac13 \right)$. Thus Theorem \ref{thm:optimality} also applies in this case and yields an identical corollary:

\begin{cor}\label{cor:3}
    If  $\log f$ is $3$-equispacing, then the  $3$-point Fibonacci lattice $\Phi_4$ minimizes the  tensor product  energy $E_F$.
    \end{cor}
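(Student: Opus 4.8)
The plan is to show that the $3$-point Fibonacci lattice is a Latin hypercube set with a single distance vector, and then invoke Theorem~\ref{thm:optimality} directly. Recall that by \eqref{eq:fib_lat} with $N = F_4 = 3$ and $F_3 = 2$, the $3$-point Fibonacci lattice is
$$
\Phi_4 = \left\{\left(\frac03, \left\{\frac{0\cdot 2}{3}\right\}\right), \left(\frac13, \left\{\frac{1\cdot 2}{3}\right\}\right), \left(\frac23, \left\{\frac{2\cdot 2}{3}\right\}\right)\right\} = \left\{\left(0, 0\right), \left(\frac13, \frac23\right), \left(\frac23, \frac13\right)\right\}.
$$
Swapping the two coordinates (a symmetry of the torus) turns this into the diagonal lattice $\{(0,0), (\tfrac13,\tfrac13), (\tfrac23,\tfrac23)\}$, as noted in the excerpt. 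First I would record that this is a permutation set — hence a Latin hypercube set with $d=2$ — since its projection onto each axis is $\{0, \tfrac13, \tfrac23\}$, the set of $3$ equispaced points.

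Next I would compute all pairwise distance vectors. For the diagonal lattice, the three pairs of points differ by $(\tfrac13,\tfrac13)$, $(\tfrac23,\tfrac23)$, or $(\tfrac13,\tfrac13)$ up to sign/order; since the geodesic distance on $\mathbb{T}$ satisfies $\delta(0,\tfrac13) = \delta(0,\tfrac23) = \tfrac13$, every pair has distance vector exactly $(\tfrac13,\tfrac13)$. Hence $\#\dv(\Phi_4) = 1$ with common distance vector $(\tfrac13,\tfrac13)$. (One can also verify consistency with the divisibility condition \eqref{eq:div}: for $N=3$, $\tfrac{N-1}{\gcd(2,N-1)} = 1$ divides $d=2$, and $N = 3 \leq 2d+1 = 5$.)

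Having established that $\Phi_4$ is a Latin hypercube set with a single distance vector, the hypothesis ``$\log f$ is $3$-equispacing'' is precisely the remaining assumption needed in Theorem~\ref{thm:optimality}, so that theorem gives $E_F(Y) \geq E_F(\Phi_4)$ for every $3$-point configuration $Y \subset \mathbb{T}^2$, which is the claim. By \eqref{eq:uds_energy} one additionally has the explicit value $E_F(\Phi_4) = 6\, f(\tfrac13)^2$.

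There is essentially no obstacle here: the entire content is the verification that $\Phi_4$ has a single distance vector, which is a one-line computation, after which the heavy lifting is done by Theorem~\ref{thm:optimality}. The only point requiring a modicum of care is the bookkeeping of torus symmetries — making sure that the identification of the Fibonacci lattice $\Phi_4$ with the diagonal lattice is legitimate — but this is exactly the ``modulo symmetries'' convention already fixed in the introduction. (I would also flag the apparent typo in the corollary statement, where $\Phi_4$ is referred to as the ``$3$-point Fibonacci lattice'' — consistent with $F_4 = 3$ — even though the surrounding text sometimes writes $\Phi_3$; this is purely cosmetic and does not affect the proof.)
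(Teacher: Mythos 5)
Your proof is correct and follows exactly the paper's route: verify that $\Phi_4$ is (up to the coordinate-swap symmetry) the diagonal lattice with the single distance vector $\left(\tfrac13,\tfrac13\right)$, then apply Theorem~\ref{thm:optimality} with the $3$-equispacing hypothesis on $\log f$. The extra bookkeeping (divisibility check, the explicit value $6f(\tfrac13)^2$) is fine but not needed; note also that the paper never writes $\Phi_3$, so the flagged ``typo'' is not one.
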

   \noindent  A  statement similar to this has already been observed in \cite{Nag25b} and a different proof of this fact in the case when $f$ is logarithmically convex is given in \S\ref{s.type}.

Finally we would like to mention some other previously known results  about the Fibonacci lattice  and compare them to our results above. All of the prior results concerned specific potentials arising from QMC integration. 

\begin{itemize}
\item It was proved in \cite{Nag25b} that for the potential $f_p$ in \eqref{eq:qmc_potential} with $0 < p \leq 9$ the $5$-point Fibonacci lattice minimizes $E_F(X)$ among all $5$-point configurations. For this particular potential, Corollary \ref{cor:opt_qmc} yields a smaller range of $p \leq 6$. 
We do however want to point out that the method to prove Theorem \ref{thm:optimality} is completely elementary and does not rely on any computer assisted calculations.

\item Let again $f_p$ be the potential as in \eqref{eq:qmc_potential}. Via an optimization algorithm it was shown in \cite{HO16} that the $N$-point Fibonacci lattice for $N \in \{8, 13\}$ (which has multiple distance vectors, see Figure \ref{fig:fib_lats}) is the energy minimizer for $p \in \{1, 6\}$ among all $N$-point configurations.

    \end{itemize}


\subsection{Classification of lattices with a single distance vector}

In this section we will classify all lattices with a single distance vector. For small values of $N$ this can be checked straightforwardly. Note that for each $N=2, 3, 4$  there is, up to torus symmetry,  
 only one lattice in $\bbT^d$ with $N$ points:   the corresponding generating vector is 
$$
\left(\frac1N, \dots, \frac1N\right).
$$
The lattices with $N=2, 3$ points are easily seen to 
have a single distance vector, while the case of $N=4$ defines precisely two distance vectors. 

\begin{thm} \label{thm:unique_dist_lat}
    A lattice $\Lambda \sbse [0, 1)^d, \#\Lambda = N \geq 3$ has a single distance vector 
    if and only if $N$ is prime and the generating vector of $\Lambda$ is of the form (up to torus symmetry)
    $$
    \bigg(\underbrace{\frac1N, \dots, \frac1N}_{m \text{ times}}, \underbrace{\frac2N, \dots, \frac2N}_{m \text{ times}}, \dots, \underbrace{\frac nN, \dots, \frac nN}_{m \text{ times}}\bigg)
    $$
    with $N = 2n+1$ and $d = mn$.
\end{thm}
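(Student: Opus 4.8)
The plan is to analyze a lattice $\Lambda$ in $[0,1)^d$ with generating vector $\bv = (v_1,\dots,v_d) = (h_1/N,\dots,h_d/N)$ (where each $h_i$ is coprime to $N$, and we may normalize $h_1 = 1$) by reducing everything to a statement about the multiset of geodesic distances $\delta(kh_i/N, 0)$ as $k$ ranges over $\{1,\dots,N-1\}$. The distance vector between $\bx^0 = \mathbf 0$ and $\bx^k$ is, up to sorting, $(\delta(0, kh_1/N), \dots, \delta(0, kh_d/N))$, so ``single distance vector'' means precisely that all $N-1$ of these (sorted) vectors coincide. The key observation is that for a fixed $i$, as $k$ runs over $\{1,\dots,N-1\}$ (or over any complete set of nonzero residues), the value $\delta(kh_i/N,0) = \delta(k'/N,0)$ with $k' = kh_i \bmod N$ takes each value in $\{1/N, 2/N, \dots\}$ with the multiplicity already computed in Lemma \ref{lem:double_counting} (namely each distance appears twice, except $1/2$ once when $N$ is even). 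So the coordinate-wise distance profiles are forced; what is not forced is whether they can be simultaneously sorted into a constant vector.

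First I would dispose of the ``if'' direction: given the claimed generating vector with $N = 2n+1$ prime and $d = mn$, one checks directly that for each $k \in \{1,\dots,N-1\}$, the multiset $\{kj \bmod N : j = 1,\dots,n\}$ equals, after applying $\delta(\cdot/N,0)$, exactly $\{1/N, 2/N, \dots, n/N\}$ — this is because multiplication by $k$ permutes the nonzero residues mod the prime $N$, and $\{\pm 1, \pm 2, \dots, \pm n\}$ is all of them, so $\{kj : j=1,\dots,n\}$ hits each $\pm$-pair exactly once, hence each geodesic distance exactly once. Taking $m$ copies of each $j/N$ in the generator just scales this by $m$, giving the constant sorted distance vector $(\tfrac1N,\dots,\tfrac1N,\tfrac2N,\dots,\tfrac2N,\dots,\tfrac nN,\dots,\tfrac nN)$ (each repeated $m$ times) for every $k$. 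Hence a single distance vector.

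For the ``only if'' direction, suppose $\Lambda$ has a single distance vector $(\rho_1,\dots,\rho_d)$. The sorted vector $(\delta(kh_i/N,0))_i$ must equal $(\rho_1,\dots,\rho_d)$ for every $k$. Consider the largest entry: if $N$ is even, the value $1/2$ occurs in some coordinate's profile, and by the single-distance condition combined with Lemma \ref{lem:double_counting} the count forces a contradiction unless $N-1 \mid d$ in a way incompatible with $1/2$ appearing only once per coordinate-profile but needing to appear in a fixed number of coordinates for all $N-1$ values of $k$ — more carefully, $kh_i \equiv N/2$ has exactly one solution in $k$ for each $i$, so the coordinate $i$ contributes the entry $1/2$ for exactly one value of $k$, yet the single-distance condition demands $1/2$ appear for all $k$ or none; since $d \geq 1$ it must appear, contradiction. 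Hence $N$ is odd. Next, $N$ prime: if a prime $q \mid N$ with $q < N$, then for $k = N/q$ the residue $kh_i \bmod N$ is divisible by... (one shows the distance profile for this particular $k$ degenerates — fewer distinct values appear, or certain small distances are missed — violating constancy). Concretely, for $k$ not coprime to $N$, the map $j \mapsto kj \bmod N$ is not a bijection on residues, so $\{kh_i \bmod N : i\}$ cannot reproduce the same sorted distance vector as for $k=1$; this is the crux. Finally, with $N = 2n+1$ prime, constancy of the sorted vector over all $k$ forces each $h_i$, after the forced normalization, to be such that the multiset $\{h_i \bmod N\}$ is $m$ copies of $\{1,2,\dots,n\}$ up to sign (i.e.\ up to torus symmetry, which lets us flip each coordinate and reorder), giving exactly the stated form with $d = mn$.

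The main obstacle I expect is the ``only if'' step showing $N$ must be \emph{prime} and pinning down the exact multiset structure: the sorted-vector condition is a strong simultaneous constraint over all $k \in \{1,\dots,N-1\}$, and translating ``for every $k$, multiplication by $k$ sends the multiset $\{h_i\}$ to a multiset with the same geodesic-distance profile'' into ``$\{h_i\}$ is a union of full $\pm$-orbits'' requires a clean argument — likely via summing a suitable function (e.g.\ counting how many coordinates realize the smallest distance $1/N$, which must be $2m$ for \emph{every} $k$, forcing $\#\{i : kh_i \equiv \pm 1\} = 2m$ for all $k$, i.e.\ $\#\{i : h_i \equiv \pm k^{-1}\} = 2m$ for all $k$, so every nonzero residue class pair is hit equally often; this already needs $N$ prime for $k^{-1}$ to range over everything and gives $d = 2m \cdot n / 2 \cdot \text{(pairs)}$, yielding $d = mn$). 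Making this counting argument airtight, and handling the torus-symmetry reduction carefully so that ``up to sign and permutation'' genuinely yields the canonical generator, is where the real work lies.
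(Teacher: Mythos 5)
Your ``if'' direction is essentially the paper's argument: for prime $N=2n+1$, multiplication by an invertible $k$ respects the $\pm$-pairs of nonzero residues and is injective on $\{1,\dots,n\}$ up to sign (if $kh\equiv\pm k\tilde h$ then $h=\tilde h$ or $h+\tilde h=N$, both impossible), so $\dv(\mathbf 0,\bx^k)$ consists of each value $r/N$ exactly $m$ times for every $k$, and the lattice's translation structure finishes the proof. Your ``only if'' direction, however, is organized differently from the paper's, and it is a valid alternative once the sketched steps are completed. The paper applies Lemma \ref{lem:double_counting} directly (a lattice with the $\gcd$ condition is a Latin hypercube set), which forces the common distance vector to be $m$ copies of each $r/N$, $r=1,\dots,\tfrac{N-1}2$; since every $h_i$ is coprime to $N$ and every residue up to $\lfloor N/2\rfloor$ occurs among the $h_i$, $N$ must be prime. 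You instead prove primality first — for $g=\gcd(k,N)>1$ every $kh_i \bmod N$ is divisible by $g$, so the distance $\tfrac1N$, which lies in the common vector because $h_1=1$, is missed at that $k$, contradicting constancy — and then recover the multiset by counting, for each invertible $k$, the coordinates with $kh_i\equiv\pm1$: since $\pm k^{-1}$ runs over all $\pm$-pairs as $k$ varies, each pair contains the same number of the $h_i$, giving $d=mn$ and the canonical generator up to reflections and coordinate permutations. This bypasses Lemma \ref{lem:double_counting} for the lattice case at the cost of redoing a special case of its counting; the paper's route buys brevity by citing the lemma.

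Two spots in your sketch need repair. First, the even-$N$ sub-argument does not yield the contradiction you claim: from ``$\tfrac12$ must appear in the vector for every $k$'' and ``each coordinate realizes $\tfrac12$ for exactly one $k$'' you only get $d=\mu(N-1)$ for the multiplicity $\mu$, which is not absurd. The genuine contradiction is either that $\tfrac12$ in the common vector forces some $h_i=N/2$, which is not coprime to an even $N\ge4$, or that (all $h_i$ being odd) every coordinate realizes $\tfrac12$ only at $k=N/2$, so at $k=1$ none do. In any case this step is redundant: an even $N\ge4$ is composite and is already excluded by your primality argument. Second, the count of coordinates realizing the distance $\tfrac1N$ should be $m$, not $2m$ (the factor $2$ double-counts the two signs $kh_i\equiv\pm1$); with $\#\{i: h_i\equiv\pm k^{-1}\}=m$ for all $k$, summing over the $n$ pairs gives $d=mn$ as required.
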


\begin{proof}
    Let $\Lambda \sbse \bbT^d$ be a 
     lattice with a single distance vector such that $\#\Lambda = N \geq 3$. By applying torus symmetries we may assume the generating vector to have the form
    $$
    \mathbf h = \left(\frac{h_1}N, \dots, \frac{h_d}N\right)
    $$
    with $1 = h_1 \leq \dots \leq h_d \leq \lfloor N/2 \rfloor$. In particular, the distance vector among points in $\Lambda$ must be of the form
    $$
    \dv(\mathbf0, \mathbf h) = \left(\frac{h_1}N, \dots, \frac{h_d}N\right),
    $$
    by Lemma \ref{lem:double_counting} thus
    $$
    \left(\frac{h_1}N, \dots, \frac{h_d}N\right) = \bigg(\underbrace{\frac1N, \dots, \frac1N}_{m \text{ times}}, \underbrace{\frac2N, \dots, \frac2N}_{m \text{ times}}, \dots, \underbrace{\frac nN, \dots, \frac nN}_{m \text{ times}}\bigg)
    $$
    with $n = \frac{N-1}2 \in \bbN$ and $m = \frac{2d}{N-1}$. Furthermore, $N$ must be relatively prime to $1, \dots, \lfloor N/2\rfloor$, so that it must already be prime itself.

    It remains to show that any lattice with generating vector
    $$
    \bigg(\underbrace{\frac1N, \dots, \frac1N}_{m \text{ times}}, \underbrace{\frac2N, \dots, \frac2N}_{m \text{ times}}, \dots, \underbrace{\frac nN, \dots, \frac nN}_{m \text{ times}}\bigg)
    $$
    with $d=mn$ and $N=2n+1$ prime is a lattice with a single distance vector. Indeed, for an arbitrary $k \in \{1, \dots, N-1\}$ consider the distance vector from $\mathbf{0} = (0, \dots, 0)$ to
    $$
    \bx^k = \left(\left\{\frac{h_1 k}N\right\}, \dots, \left\{\frac{h_d k}N\right\}\right)
    $$
    with $h_1 = \dots = h_m = 1, h_{m+1} = \dots = h_{2m} = 2, \dots, h_{(n-1)m+1} = \dots = h_{nm} = n$. We will show that this distance vector does not depend on $k$. If there were $h, \tilde h \in \{1, \dots, n\}, h \neq \tilde h$ with
    $$
    \left\{\frac{hk}N\right\} = \left\{\frac{\tilde hk}N\right\},
    $$
    then we would either get
    $$
    hk \equiv \tilde h k \mod N \quad \Rightarrow \quad h = \tilde h
    $$
    or
    $$
    hk \equiv N - \tilde h k \mod N \quad \Rightarrow \quad (h + \tilde h) k \equiv 0 \mod N \quad \Rightarrow \quad h + \tilde h = N,
    $$
    both leading to contradictions since $h \neq \tilde h$ and $h + \tilde h \leq 2n < N$. Thus
    $$
    \dv(\mathbf 0, \bx^k) = \bigg(\underbrace{\frac1N, \dots, \frac1N}_{m \text{ times}}, \underbrace{\frac2N, \dots, \frac2N}_{m \text{ times}}, \dots, \underbrace{\frac nN, \dots, \frac nN}_{m \text{ times}}\bigg)
    $$
    and by the lattice structure this is also the distance vector between any other two point from the lattice.
\end{proof}

We see that if $2d+1$ is prime, the bound on $N$ mentioned after \eqref{eq:div} can be attained by such a lattice construction.


\subsection{Some non-lattice  sets with a single distance vector}

In addition to the lattices from the previous section, we can also construct  the following $4$-, $6$-, $8$-, and $9$-point Latin hypercube sets with a single distance vector in $\bbT^3$, $\bbT^{10}$, $\bbT^{14}$, and $\bbT^{12}$ respectively:
\begin{align*}
    X_4 = \left\{\frac14 (0, 0, 0), \frac14 (1, 1, 2), \frac14 (2, 3, 1), \frac14 (3, 2, 3)\right\},
\end{align*}

\begin{align*}
    X_6 = \bigg\{ & \frac16 (0, 0, 0, 0, 0, 0, 0, 0, 0, 0), \frac16 (1, 1, 1, 1, 2, 2, 2, 2, 3, 3), \\
    & \frac16 (2, 2, 2, 3, 3, 4, 5, 5, 1, 1), \frac16 (3, 3, 4, 2, 5, 1, 1, 4, 4, 5), \\
    & \frac16 (4, 5, 3, 4, 1, 3, 4, 1, 5, 4), \frac16 (5, 4, 5, 5, 4, 5, 3, 3, 2, 2) \bigg\},
\end{align*}

\begin{align*}
    X_8 = \bigg\{ & \frac18 (0, 0, 0, 0, 0, 0, 0, 0, 0, 0, 0, 0, 0, 0), \frac18 (1, 1, 1, 1, 2, 2, 2, 2, 3, 3, 3, 3, 4, 4), \\
& \frac18 (2, 2, 3, 5, 1, 4, 4, 5, 1, 6, 6, 7, 1, 3), \frac18 (3, 3, 2, 4, 3, 6, 6, 7, 6, 1, 1, 4, 5, 7), \\
& \frac18 (4, 5, 6, 3, 5, 1, 5, 1, 2, 4, 7, 1, 2, 6), \frac18 (5, 4, 7, 2, 7, 3, 7, 3, 5, 7, 4, 6, 6, 2), \\
& \frac18 (6, 7, 5, 6, 4, 5, 1, 4, 7, 2, 5, 2, 3, 1), \frac18 (7, 6, 4, 7, 6, 7, 3, 6, 4, 5, 2, 5, 7, 5)\bigg\},
\end{align*}

\begin{align*}
    X_9 = \bigg\{ & \frac19 (0, 0, 0, 0, 0, 0, 0, 0, 0, 0, 0, 0), \frac19 (1, 1, 1, 2, 2, 2, 3, 3, 3, 4, 4, 4), \\
& \frac19 (2, 2, 2, 4, 4, 4, 6, 6, 6, 8, 8, 8), \frac19 (3, 4, 5, 1, 6, 8, 1, 4, 7, 2, 3, 7), \\
& \frac19 (4, 3, 8, 7, 8, 6, 2, 8, 5, 5, 7, 3), \frac19 (5, 6, 4, 8, 1, 3, 4, 7, 1, 7, 2, 6), \\
& \frac19 (6, 5, 7, 5, 3, 1, 5, 2, 8, 1, 6, 2), \frac19 (7, 8, 3, 6, 5, 7, 7, 1, 4, 3, 1, 5), \\
& \frac19 (8, 7, 6, 3, 7, 5, 8, 5, 2, 6, 5, 1)\bigg\}.
\end{align*}
These point sets are again energy minimizers in the sense of Theorem \ref{thm:optimality}. It can be observed that $X_4$ and $X_8$ are digital nets \cite{DP10}.

It is easy to see that for every $N \in \bbN$ there exists a Latin hypercube set  $X \sbse \bbT^d, \#X = N$ with a single distance vector in dimension $d = (N-1)!$, namely where every coordinate of $\bbT^{(N-1)!}$ corresponds to a permutation $\sigma$ of $\{0, 1,\dots, N-1\}$ fixing $\sigma(0) = 0$. It is thus more interesting to find, for any given $N \in \bbN$, the smallest dimension $d = d(N)$ for which there exists an $N$-point Latin hypercube set with a single distance vector in $\bbT^d$. The condition \eqref{eq:div} is necessary but not sufficient.

\section{Energy minimizers have distinct coordinates}\label{s.noequal}

In the present section we  demonstrate that, under very mild assumptions on the function $f$, both global and local energy minimizers must necessarily have distinct coordinates. For example, in the two-dimensional case $\bbT^2$ this implies no two points in  the minimizing configuration can lie on the same horizontal or vertical line. This brings us one step closer to understanding when permutation sets or Latin hypercube sets minimizer tensor product energies. The result is formalized in the following theorem:

\begin{thm}\label{prop:coordinate_improvement}
	Let $f: \mathbb R  \rightarrow (0, \infty)$ be continuous, $1$-periodic, even and satisfy the following assumptions:
	\begin{itemize}		
		\item [(i)] $f(t)$ is differentiable on $(0, 1)$, 
		
		\item [(ii)] there are constants $C > 0$ and  $\varepsilon > 0$ such that  $f(t) \leq f(0) - Ct$ for all $0 \leq t < \varepsilon$. 
	\end{itemize}
	Consider an $N$-point configuration  $X = \{\bx^0, \dots, \bx^{N-1}\} \subseteq \bbT^d$ such that $x^0_1 = \dots = x^j_1$ for some  $j \geq 1$ and $x^k_1 \neq x^0_1$ for all $k > j$ (if such $k$ exist). 
	Define a perturbation of $X$ by
    $$
	X_r \coloneqq \{\bx^0(r), \bx^1, \dots, \bx^{N-1}\},
	$$
    where $\bx^0(r) = (x_1^0 + r, x_2^0, \dots, x_d^0)$ for $r \in [-1/2, 1/2)$. Then the function
    $$
    r \mapsto E_F(X_{r})
    $$
    does not have a local minimium at $r=0$.
	\end{thm}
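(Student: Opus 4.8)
The plan is to isolate the part of $E_F(X_r)$ that actually depends on $r$ and show that it is a smooth function plus a term exhibiting a genuine downward corner at $r=0$ coming from the points $\bx^1,\dots,\bx^j$ that share the first coordinate of $\bx^0$; the corner will dominate and force the energy to strictly decrease in at least one direction.

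First I would note that, since $F$ and hence each $f$ is even and only $\bx^0$ moves,
\[
E_F(X_r) - E_F(X_0) = \phi(r) - \phi(0), \qquad
\phi(r) := 2\sum_{\ell=1}^{N-1} c_\ell\, f\!\left(x_1^0 + r - x_1^\ell\right),
\]
where $c_\ell := \prod_{i=2}^{d} f\!\left(x_i^0 - x_i^\ell\right) > 0$ because $f$ is strictly positive. Splitting the sum according to whether $x_1^\ell = x_1^0$ (equivalently $1\le \ell\le j$) or $x_1^\ell\neq x_1^0$, I would write
\[
\phi(r) = 2C_0\, f(r) + \psi(r),\qquad C_0 := \sum_{\ell=1}^{j} c_\ell,\qquad \psi(r):=2\sum_{\ell=j+1}^{N-1} c_\ell\, f\!\left(x_1^0 + r - x_1^\ell\right),
\]
and observe that $C_0 > 0$ since $j \ge 1$ and each $c_\ell > 0$.

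Next I would establish two estimates valid for small $|r|$. From (i), each argument $x_1^0 - x_1^\ell$ with $\ell > j$ lies, modulo $1$, strictly inside $(0,1)$ where $f$ is differentiable; hence $\psi$ is differentiable near $r=0$ and $\psi(r) - \psi(0) = \psi'(0)\, r + o(|r|)$. From (ii) together with evenness, $f(r) \le f(0) - C|r|$ for $|r| < \varepsilon$, so $2C_0 f(r) - 2C_0 f(0) \le -2C_0 C |r|$. Combining,
\[
E_F(X_r) - E_F(X_0) \;\le\; -2C_0 C\,|r| + \psi'(0)\, r + o(|r|).
\]
Choosing the sign of $r$ so that $\psi'(0)\, r \le 0$ (either sign works if $\psi'(0)=0$), the right-hand side is at most $-(2C_0 C + |\psi'(0)|)\,|r| + o(|r|)$, which is strictly negative for all sufficiently small $|r|$ of that sign. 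Thus there are $r \ne 0$ arbitrarily close to $0$ with $E_F(X_r) < E_F(X_0)$, so $r \mapsto E_F(X_r)$ has no local minimum at $r=0$.

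The one point requiring genuine care — and the place where the precise hypothesis on $X$ is used — is the differentiability of $\psi$ near $0$: it relies on $x_1^k \neq x_1^0$ for all $k > j$ so that, for all small $r$ simultaneously, none of the finitely many arguments $x_1^0 + r - x_1^\ell$ hits an integer (where $f$ need not be differentiable). Everything else is an elementary one-variable comparison — the downward corner of $2C_0 f(\cdot)$ at $0$, guaranteed by (ii), beats the smooth perturbation $\psi$ — and uses no convexity or monotonicity of $f$ beyond (i)–(ii).
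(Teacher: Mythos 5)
Your proof is correct and follows essentially the same route as the paper: you separate the energy into the kinked term $2C_0 f(r)$ coming from the points sharing the first coordinate and a term $\psi$ that is differentiable at $0$, then let the downward corner from assumption (ii) beat the smooth part. The only difference is cosmetic: the paper delegates the final step to a stated calculus lemma (a function with a downward kink plus a differentiable function has no local minimum at the kink), while you prove that fact inline by choosing the sign of $r$ opposite to $\psi'(0)$.
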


As a direct consequence we obtain the following.

\begin{cor}\label{c.noequal}
    Let $f$ satisfy the assumptions of Theorem \ref{prop:coordinate_improvement} and let $X = \{\bx^0, \bx^1, \dots, \bx^{N-1}\} \sbse \bbT^d$ be a (local) minimizer of the corresponding energy $E_F$. Then
    $$
    x_i^k \neq x_i^\ell
    $$
    for all $i = 1, \dots, d$ and $k, \ell = 0, 1, \dots, N-1, k \neq \ell$.
\end{cor}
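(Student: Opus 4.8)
The plan is to derive Corollary~\ref{c.noequal} from Theorem~\ref{prop:coordinate_improvement} by a symmetry-and-relabeling reduction; since the theorem does most of the work and its proof is not part of the excerpt, I also sketch how I would prove it. For the corollary I would argue by contradiction: suppose $X=\{\bx^0,\dots,\bx^{N-1}\}$ is a local minimizer of $E_F$ but $x_i^k=x_i^\ell$ for some coordinate $i$ and some $k\neq\ell$. Since $F(t_1,\dots,t_d)=f(t_1)\cdots f(t_d)$, and hence $E_F$, is invariant under permuting the coordinate axes and under relabeling the points, I may replace $X$ by its image under the transposition of axes $1$ and $i$ composed with a suitable relabeling; this image is again a local minimizer, so I may assume without loss of generality that $i=1$ and that $\bx^0,\dots,\bx^j$, with $j\geq1$, are exactly the points of $X$ having first coordinate $x_1^0$ (so that $x_1^k\neq x_1^0$ for every $k>j$). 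This is precisely the hypothesis of Theorem~\ref{prop:coordinate_improvement}, which then says that, for the perturbation $X_r$ defined there, the function $r\mapsto E_F(X_r)$ has no local minimum at $r=0$. But $r\mapsto X_r$ is a continuous path of $N$-point configurations with $X_0=X$, so (as $X_r\to X$ when $r\to0$) local minimality of $X$ forces $r=0$ to be a local minimum of $r\mapsto E_F(X_r)$ --- a contradiction. Hence no two points of $X$ share a first coordinate, and by the axis-permutation symmetry the same holds for every coordinate.

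For the theorem itself, observe that only the interactions involving $\bx^0$ depend on $r$, and since $f$ is even, $E_F(X_r)=2\phi(r)+(\text{terms independent of }r)$, where
$$
\phi(r)=\sum_{\ell=1}^{N-1}c_\ell\, f(x_1^0+r-x_1^\ell),\qquad c_\ell\coloneqq\prod_{i=2}^d f(x_i^0-x_i^\ell)>0 .
$$
I would split off the terms $1\leq\ell\leq j$, for which $x_1^\ell=x_1^0$ and the summand equals $c_\ell f(r)$:
$$
\phi(r)=A\,f(r)+\psi(r),\qquad A\coloneqq\sum_{\ell=1}^{j}c_\ell>0,\qquad \psi(r)\coloneqq\sum_{\ell=j+1}^{N-1}c_\ell\, f(x_1^0+r-x_1^\ell).
$$
For $|r|$ small, each argument $x_1^0-x_1^\ell+r$ with $\ell>j$ stays in a fixed compact subset of $\bbR\setminus\bbZ$, where $f$ is differentiable by condition~(i) and periodicity, so $\psi(r)=\psi(0)+Br+o(r)$ as $r\to0$, with $B=\psi'(0)$. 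Meanwhile, condition~(ii) together with the evenness of $f$ yields $f(r)-f(0)\leq -C|r|$ for $0<|r|<\varepsilon$, so $\phi(r)-\phi(0)\leq -AC|r|+Br+o(r)$. Reading this for $r\downarrow0$ gives $(B-AC)r+o(r)$ and for $r\uparrow0$ gives $(B+AC)r+o(r)$; since $AC>0$, the inequalities $B-AC\geq0$ and $B+AC\leq0$ cannot both hold, so on at least one side of $0$ the quantity $\phi(r)-\phi(0)$, hence $E_F(X_r)-E_F(X_0)$, is strictly negative for $r$ close to $0$. Therefore $r=0$ is not a local minimum of $r\mapsto E_F(X_r)$.

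The delicate step is this last sign analysis: the guaranteed drop $-AC|r|$ from the coinciding coordinates is \emph{even} in $r$, whereas perturbing the remaining (smooth) interactions contributes only the \emph{odd} linear term $Br$, and an odd linear term cannot cancel an even, at-least-linear decrease on both sides of the origin at once. This is exactly why condition~(ii) asks for a linear (not merely strict) drop of $f$ near $0$: with only a quadratic-type maximum of $f$ at $0$, a nonzero $B$ could defeat the $O(r^2)$ decrease and $r=0$ might truly be a local minimum. Granting the theorem, the reduction to the corollary is routine --- the only care needed is the bookkeeping with the torus and relabeling symmetries, together with the elementary fact that a minimizer of $E_F$ restricts to a minimizer along every continuous path through it.
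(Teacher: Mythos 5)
Your proof of the corollary is correct and is essentially the paper's own argument: the paper simply states it as a direct consequence of Theorem \ref{prop:coordinate_improvement}, obtained exactly by your contradiction-plus-relabeling reduction (permute coordinates, relabel so the coinciding points are $\bx^0,\dots,\bx^j$, and note that local minimality of $X$ would force $r=0$ to be a local minimum of $r\mapsto E_F(X_r)$). Your sketch of the theorem also matches the paper's proof in substance — the split into the kinked term $A f(r)$ and the differentiable remainder $\psi(r)$ is the paper's $G+H$ decomposition, and your one-sided sign analysis is just an inlined proof of its Lemma \ref{lem:basic_calculcus_lemma}.
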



That is, points in minimizing configurations must necessarily have distinct coordinates.

\begin{rem*}
Assumption (ii) in Theorem  \ref{prop:coordinate_improvement} (which means that $f$ has a sharp kink at $t=0$) can be interpreted as saying that the repulsion induced by the interaction $F$ is quite strong at small distances in each coordinate. 
We observe that if $f$ is singular at zero, i.e. $f(0) = + \infty$, the statement of Corollary \ref{c.noequal}  would have been obvious. It is a known phenomenon \cite{CFP} that when the local repulsion is weak, then minimizers of energies tend to cluster. Thus, this result can be viewed as complimentary: minimizers do not cluster when the local repulsion is strong. 

Observe that assumption (ii) is satisfied by various natural kernels, in particular the aforementioned $f(t) = 1/2 - t + t^2$ for $t\in [0,1]$ which corresponds to the periodic $L_2$-discrepancy, or more generally the QMC potential \eqref{eq:qmc_potential} for $0 < p < 24$ (where it stays positive). 
\end{rem*}

To prove Theorem  \ref{prop:coordinate_improvement}, we begin with a simple   observation: if a function has a downwards-opening kink, then it  cannot have a local minimum there.

\begin{lem} \label{lem:basic_calculcus_lemma}
	Let $G: (-\varepsilon, \varepsilon) \rightarrow \mathbb{R}$ be a function with $G(r) \leq G(0) - C|r|$ in $-\varepsilon < r < \varepsilon$ for some $C > 0$. Let $H: (-\varepsilon, \varepsilon) \rightarrow \mathbb{R}$ be   differentiable at $r = 0$. Then $r \mapsto G(r) + H(r)$ does not have a local minimum at $r = 0$.
\end{lem}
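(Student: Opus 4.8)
The plan is to analyze the behavior of $G+H$ near $r=0$ directly from the two hypotheses, showing that $(G+H)(r) < (G+H)(0)$ for all sufficiently small $r \neq 0$ of an appropriate sign, which already contradicts $r=0$ being a local minimum. The key observation is that the kink bound on $G$ contributes a term linear in $|r|$ with a \emph{negative} sign, while differentiability of $H$ at $0$ means $H(r) = H(0) + H'(0) r + o(r)$, so $H$ can only contribute a term linear in $r$ (of either sign) plus something negligible. Since $|r|$ dominates any quantity that is $o(r)$, and since we are free to pick the sign of $r$ to make $H'(0) r \le 0$, the negative linear term from $G$ will win.

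More precisely, first I would write, for $0 < |r| < \eps$,
$$
G(r) + H(r) \le G(0) - C|r| + H(r) = \big(G(0)+H(0)\big) - C|r| + \big(H(r) - H(0)\big).
$$
Next, using differentiability of $H$ at $0$, write $H(r) - H(0) = H'(0) r + \rho(r)$ where $\rho(r)/r \to 0$ as $r \to 0$; in particular there is $0 < \delta \le \eps$ such that $|\rho(r)| < \tfrac{C}{2}|r|$ for all $0 < |r| < \delta$. Then choose the sign of $r$ so that $H'(0) r \le 0$ (if $H'(0) \ge 0$ take $r < 0$, otherwise take $r > 0$; either choice is available arbitrarily close to $0$). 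For such $r$ with $0 < |r| < \delta$ we get
$$
G(r) + H(r) \le \big(G(0)+H(0)\big) - C|r| + 0 + \tfrac{C}{2}|r| = \big(G(0)+H(0)\big) - \tfrac{C}{2}|r| < G(0)+H(0).
$$
Hence in every neighborhood of $0$ there are points where $G+H$ is strictly smaller than its value at $0$, so $r=0$ is not a local minimum.

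There is essentially no obstacle here: the statement is a clean one-line estimate once the $o(r)$ error from $H$ is made quantitative. The only mild care needed is to notice that one must exploit the freedom to choose the sign of $r$ (one cannot conclude $G+H$ decreases on \emph{both} sides of $0$, only on at least one side, which is all that is required to rule out a local minimum), and to absorb the possibly-positive linear contribution $H'(0)r$ into the dominant $-C|r|$ term by this sign choice combined with the bound on the remainder $\rho$.
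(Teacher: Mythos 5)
Your proof is correct: the kink estimate for $G$ plus the first-order expansion $H(r)=H(0)+H'(0)r+o(r)$, combined with choosing the sign of $r$ so that $H'(0)r\le 0$, gives $G(r)+H(r)\le G(0)+H(0)-\tfrac{C}{2}|r|$ for arbitrarily small such $r$, which rules out a local minimum at $0$. The paper leaves this lemma unproved as a simple calculus fact, and your argument is exactly the intended elementary one, including the necessary care about only getting a decrease on one side of $0$.
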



\begin{proof} [Proof of Theorem \ref{prop:coordinate_improvement}]
    Without loss of generality let  $x^0_1 = \dots = x^j_1=0$ be the common coordinate. Consider the perturbation $\bx^0(r) = (r, x_2^0, \dots, x_d^0)$ while keeping all other coordinates fixed. We separate the terms of $E_F (X_r)$ that depend on $r$ and treat the rest as a constant $K_1>0$
	\begin{align*}
		E_F (X_r ) = & f(r) \cdot 2\sum_{k = 1}^{j} \prod_{i=2}^d f(x_i^0-x_i^k) 
		 +  2\sum_{k = j+1}^{N-1} f(r - x_1^k) \prod_{i=2}^d f(x_i^0-x_i^k) + K_1.
	\end{align*}
	Let
	$$
	G(r) = f(r) \cdot 2\sum_{k = 1}^{j} \prod_{i=2}^d f(x_i^0-x_i^k)
    $$
    and
    $$
	H(r)  =  2\sum_{k = j+1}^{N-1} f(r - x_1^k) \prod_{i=2}^d f(x_i^0-x_i^k) + K_1.
	$$
	By (i) and since $x_1^k \neq 0$ for $k > j$ we have that $H(r)$ is differentiable at $r=0$. Also note that
	$$
    K_2 = 2\sum_{k = 1}^{j} \prod_{i=2}^d f(x_i^0-x_i^k) > 0
	$$
	is a positive constant since $f$ only takes on positive values and $j \geq 1$ so that the sum is nonempty. Hence (ii) implies that
	$$
	G(r) \leq K_2 (f(0) - C |r|)  = G(0) - C K_2 |r|
	$$
	in a neighborhood around $r = 0$. Thus $G$ and $H$ satisfy the assumptions of Lemma \ref{lem:basic_calculcus_lemma}, hence $E_F (X_r) = G(r) +H(r)$ does not have a local minimum in $r=0$.
\end{proof}



\section{Permutation type and winding number}\label{s.type}

In this section  we show that lattices minimize tensor product energies with logarithmically convex potential $f$  among large classes of sets which share structural similarities with the lattice. In particular, lattices are local minimizers of such energies. To this end, we prove a lower bound on the energy in terms of the permutation type of the configuration.

Given a point set in  $\mathbb T^d \simeq[0,1)^d$, we can always assume that the $x_1$-coordinates are ordered, i.e. $x_1^k\le x_1^\ell$ for $k<\ell$. However the same need not hold for the other coordinates. Instead when $d\ge 2$ we can order the points by one of the other coordinates and obtain a permutation that relates this ordering to our original ordering. We obtain the following definition:
\begin{defi}\label{def.perm_type}
    For a given point set $X = \{ \bx^0, \bx^1, \dots, \bx^{N-1} \} \subset \mathbb T^d \simeq[0,1)^d$, suppose $S=(\sigma_1,\sigma_2,\dots,\sigma_d)$ is a $d$-tuple of permutations of $\{0,1,\dots,N-1\}$ satisfying
    $$
    x_i^{\sigma^{-1}_i(0)}\le x_i^{\sigma^{-1}_i(1)}\le\dots\le x_i^{\sigma^{-1}_i(N-1)}
    $$
    for all $i$. Then we say $S$ is a \emph{permutation type} of $X$.
\end{defi}

For example, we note that the Latin hypercube set $X_{\sigma_2,...,\sigma_d}$ given in Definition \ref{def:lat_hypcube} has permutation type $(\text{id},\sigma_2,...,\sigma_d)$. We will generally assume that the first coordinates are ordered, i.e. $\sigma_1$ is the identity permutation, and by translating so that $\bx^0=(0,0,...,0)$ we will also assume that $\sigma_i(0)=0$ for all $i$. The permutation type is uniquely defined if and only if all points have distinct coordinates in the sense of §\ref{s.noequal}. For any permutation $\sigma$ we will define the \emph{$k$-th winding number} associated with $\sigma$ by $$\omega_\sigma(k):=\#\{j:\sigma((j+k) \textup{ mod } N)<\sigma(j)\}.$$ 
For notational simplicity,  input values outside of the domain of $\sigma$ will be implicitly assumed to be taken modulo $N$. Conceptually, this quantity counts how many times the permutation winds around the torus, moving along it with step size $k$. With this definition we are now ready to state the lower bound for the energy.

\begin{thm}\label{thm:permtype}
    Assume the configuration  $X=\{ \bx^0, \bx^1,\dots,\bx^{N-1} \} \subset \mathbb T^d $ has unique permutation type $(\sigma_1,\sigma_2,\dots,\sigma_d)$. Then the following inequality holds for all  potentials $f$ decreasing and logarithmically convex on $(0,1/2]$:
    $$E_F(\bx^0,\dots,\bx^{N-1})\ge N\sum_{k=1}^{N-1}\prod_{i=1}^df\left(\frac{\omega_{\sigma_i}(k)}{N}\right).$$
    If $f$ is strictly decreasing, equality is obtained if and only if the point set $X = \{ \bx^0, \bx^1,\dots,\bx^{N-1} \}$ is a lattice.
\end{thm}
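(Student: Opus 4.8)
The plan is to ``unfold'' the energy along the first coordinate, reduce coordinate by coordinate to a one-dimensional averaging identity, and then recombine the coordinates using convexity of the tensor product $F$. By the convention of this section we list the points so that $x_1^0\le\dots\le x_1^{N-1}$ (hence $\sigma_1=\id$) and, after translating, $\bx^0=\mathbf 0$. For $k\in\{1,\dots,N-1\}$ and $j\in\{0,\dots,N-1\}$ put $a^i_{j,k}:=\big\{x_i^{(j+k)\bmod N}-x_i^{j}\big\}\in[0,1)$ (fractional part). Grouping the ordered pairs of points by the cyclic shift $k$ and using $1$-periodicity of $f$ gives
$$
E_F(X)=\sum_{k=1}^{N-1}\ \sum_{j=0}^{N-1}F\big(a^1_{j,k},\dots,a^d_{j,k}\big).
$$
The combinatorial core is the identity $\sum_{j=0}^{N-1}a^i_{j,k}=\omega_{\sigma_i}(k)$, valid for every $i,k$: writing $a^i_{j,k}=\big(x_i^{(j+k)\bmod N}-x_i^{j}\big)+\mathbf 1\big[\sigma_i((j+k)\bmod N)<\sigma_i(j)\big]$ — legitimate since, the coordinates being distinct, $m\mapsto x_i^m$ and $m\mapsto\sigma_i(m)$ induce the same order — and summing over $j$: the difference telescopes to $0$ and the indicator contributes exactly $\omega_{\sigma_i}(k)$.

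Fix $k$ and estimate the inner sum. By evenness of $f$ we may replace each $a^i_{j,k}$ by the geodesic length $b^i_{j,k}:=\delta(a^i_{j,k})\in[0,\tfrac12]$ without changing $F$. Since each $g:=\log f$ is convex on $(0,\tfrac12]$, the function $\log F(t_1,\dots,t_d)=\sum_i g(t_i)$ is convex on $[0,\tfrac12]^d$, hence $F=\exp(\log F)$ is convex there (and, being a product of positive decreasing functions, decreasing in each variable). Jensen's inequality yields
$$
\frac1N\sum_{j=0}^{N-1}F\big(b^1_{j,k},\dots,b^d_{j,k}\big)\ \ge\ F\Big(\tfrac1N\sum_{j=0}^{N-1} b^1_{j,k},\ \dots,\ \tfrac1N\sum_{j=0}^{N-1} b^d_{j,k}\Big).
$$
Because $\delta(a)\le a$ and $\delta(a)\le 1-a$ for $a\in[0,1)$, the identity from the first step gives
$$
\sum_{j=0}^{N-1}b^i_{j,k}\ \le\ \min\Big\{\sum_j a^i_{j,k},\ \sum_j\big(1-a^i_{j,k}\big)\Big\}=\min\big\{\omega_{\sigma_i}(k),\,N-\omega_{\sigma_i}(k)\big\}=N\,\delta\!\Big(\tfrac{\omega_{\sigma_i}(k)}{N}\Big).
$$
Monotonicity of $F$ in each variable then gives $\sum_j F(b^1_{j,k},\dots,b^d_{j,k})\ge N\prod_{i=1}^d f(\omega_{\sigma_i}(k)/N)$, and summing over $k$ proves the inequality.

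For the equality statement assume $f$ strictly decreasing; then $f$ is strictly convex on $(0,\tfrac12]$ (were $f$ affine on a subinterval, $\log f$ would be strictly concave there, contradicting log-convexity). I would argue that both the Jensen step and the monotonicity step are strict unless, for every $k$: (a) all vectors $(b^1_{j,k},\dots,b^d_{j,k})$ coincide, and (b) $\tfrac1N\sum_j b^i_{j,k}=\delta(\omega_{\sigma_i}(k)/N)$ for each $i$. Conditions (a)–(b) force every $a^i_{j,k}$ to be independent of $j$ — (b) pins down on which side of $\tfrac12$ all the $a^i_{j,k}$ lie, and then (a) makes them equal. Specializing to $k=1$: the $a^1_{j,1}$ are exactly the consecutive gaps of the sorted $x_1$-coordinates, so all gaps equal $1/N$ and these coordinates are equispaced; the same holds in every coordinate, so $X$ is a Latin hypercube set with $x_i^j=\sigma_i(j)/N$; and constancy of $a^i_{j,1}=\{(\sigma_i(j+1)-\sigma_i(j))/N\}$ forces $\sigma_i(j)\equiv m_i j\pmod N$ with $\gcd(m_i,N)=1$, i.e.\ $X$ is the lattice with generating vector $(1/N,m_2/N,\dots,m_d/N)$. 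Conversely, for such a lattice $a^i_{j,k}=\{km_i/N\}$ is independent of $j$ and equality holds throughout (one also reads off $\omega_{\sigma_i}(k)=km_i\bmod N$).

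The combinatorial identity and the convexity input are routine; the genuinely delicate point is the equality analysis when $\log f$ is not \emph{strictly} convex. Then $F$ could a priori be affine — in fact constant — along a nondegenerate segment of $[0,\tfrac12]^d$, which would weaken the Jensen step; one must use that $f$ strictly decreasing makes $\log f$ have everywhere-negative slope, and rule out that $F$ is constant along a genuine segment joining two of the points $(b^1_{j,k},\dots,b^d_{j,k})$ arising from an actual configuration, thereby concluding that equality still collapses $X$ onto a lattice. A lesser but essential point already in the main inequality is the passage to geodesic lengths before applying convexity: $F$ is convex on $[0,\tfrac12]^d$ but not on the whole torus $[0,1)^d$.
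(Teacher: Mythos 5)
Your proof of the main inequality is correct and follows essentially the same route as the paper: order the points by the first coordinate, group the ordered pairs by the cyclic shift $k$, telescope the counterclockwise (fractional-part) gaps to get $\sum_{j} a^i_{j,k}=\omega_{\sigma_i}(k)$, and apply Jensen's inequality to the convex tensor product $F$. The only real difference is a small detour: you first pass to geodesic lengths, apply Jensen on $[0,1/2]^d$, and then need the extra monotonicity step $\frac1N\sum_j b^i_{j,k}\le\min\{\omega_{\sigma_i}(k)/N,\,1-\omega_{\sigma_i}(k)/N\}$. The paper avoids this by observing that an even potential which is decreasing and logarithmically convex on $(0,1/2]$ is logarithmically convex on all of $(0,1)$ (it is the reflection $g(t)=\phi(|t-\tfrac12|)$ of a convex nondecreasing $\phi$), so $F$ is convex on $(0,1)^d$ and Jensen applies directly to the counterclockwise gaps, whose average is exactly $\omega_{\sigma_i}(k)/N$. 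So your closing remark that $F$ fails to be convex beyond $[0,1/2]^d$ is not quite accurate -- it is convex on the open unit cube, though of course not as a periodic function on $\mathbb R^d$ -- but your variant is valid and only costs one extra inequality.

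The equality statement is where your argument is genuinely incomplete, and the repair you propose does not work. Equality in Jensen for the (not necessarily strictly) convex $F=\exp\bigl(\sum_i g(t_i)\bigr)$ requires only that $F$ be affine, equivalently constant, on the convex hull of the argument vectors; strict decrease of $f$ gives $g'<0$, but along a direction $v$ with entries of mixed sign one can perfectly well have $\sum_i g'(t_i)v_i\equiv 0$ whenever $g$ has affine pieces, which convexity does not exclude. Concretely, if $g=\log f$ is affine near $1/3$ (with $f$ strictly decreasing and log-convex), the set $\{(0,0),(\tfrac13+\varepsilon,\tfrac13-\varepsilon),(\tfrac23,\tfrac23)\}\subset\bbT^2$ has energy $6f(1/3)^2$, which matches the bound for the identity permutation type, yet it is not a lattice; so your condition (a) is not forced by equality, and the ``only if'' direction really needs strict log-convexity of $f$ (or a separate argument). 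In fairness, the paper's own proof is loose at exactly the same spot: it asserts the dichotomy ``either $f$ is linear on the relevant domain or the arguments of $F$ are independent of $k$,'' which is not the correct equality condition for multivariate Jensen, so your honestly flagged sketch is no weaker than the published argument, and your derivation of the lattice structure from constancy of the $k=1$ gaps is in fact spelled out more carefully than in the paper. But as a self-contained proof of the equality claim as stated, this step remains a gap, and the ``one must rule out constancy along segments'' strategy cannot succeed under the stated hypotheses alone.
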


\begin{proof}
    By regrouping the terms,  the energy can be written in the form \begin{equation}\label{eq:perm_energy}
        E_F(\bx^0,\dots,\bx^{N-1})=\sum_{\ell=1}^{N-1}\sum_{k=0}^{N-1} F(\bx^k-\bx^{k+\ell}),
    \end{equation}
    if we identify $\bx^{k+\ell}=\bx^{k+\ell-N}$ in the case $k+\ell>N-1$. For $x,y \in [0,1) \simeq \mathbb T$ we define
    $$
    \delta_\circlearrowleft(x,y)=\begin{cases}
        y-x, & y \geq x, \\
        1+y-x, & y < x,
    \end{cases}
    $$
    which can be thought of as the \emph{counterclockwise} geodesic distance. Note that this need not always equal the geodesic distance $\delta$ defined in \eqref{eq:geod_dist}, but the periodicity and evenness of $f$ implies  that the value of  $f$ is independent of the choice between these two distances as its argument. Thus we rewrite 
    \begin{equation*}
    F\left(\bx^k-\bx^{k+\ell}\right)=\prod_{i=1}^df\left(x_i^k-x_i^{k+\ell}\right)  = \prod_{i=1}^df\left(\delta_\circlearrowleft(x_i^k,x_i^{k+\ell})\right).\end{equation*}
    Since $f$ is logarithmically convex and decreasing on $(0,1/2]$, it is log-convex on $(0,1)$, and hence  
    the function $F$ is convex as a function of $d$ variables on $(0,1)^d$. Therefore, we can apply  Jensen's inequality to the inner sum in  the right-hand side of \eqref{eq:perm_energy} and we obtain 
     \begin{align}\label{eq:jensens}
     \begin{split}
         E_F(\bx^0,\dots,\bx^{N-1}) &  =  \sum_{\ell=1}^{N-1} \sum_{k=0}^{N-1} F \left( \delta_\circlearrowleft(x_1^k,x_1^{k+\ell}), \dots, \delta_\circlearrowleft(x_d^k,x_d^{k+\ell} )\right)\\ & \ge N\sum_{\ell=1}^{N-1}F\left(\frac{1}{N}\sum_{k=0}^{N-1}\delta_\circlearrowleft(x_1^k,x_1^{k+\ell}),\dots,\frac{1}{N}\sum_{k=0}^{N-1}\delta_\circlearrowleft(x_d^k,x_d^{k+\ell})\right)\\
    & = N\sum_{\ell=1}^{N-1}\prod_{i=1}^d f\left(\frac{1}{N}\sum_{k=0}^{N-1}\delta_\circlearrowleft(x_i^k,x_i^{k+\ell})\right).
     \end{split}
    \end{align}
 Observe that $\delta_\circlearrowleft(x_i^k,x_i^{k+\ell})= 1+x_i^{k+\ell}-x_i^k$ if and only if 
 $x_i^{k+\ell}<x_i^k$, i.e.  $x_i^{\sigma^{-1}_i(\sigma_i(k+\ell))}<x_i^{\sigma^{-1}_i(\sigma_i(k))}$, which is equivalent to  $\sigma_i(k+\ell)<\sigma_i(k)$. This  occurs for $\omega_{\sigma_i}(\ell)$ values of $k$ (note that unique permutation type means that we can ignore the case of equality between coordinates). Otherwise it equals $x_i^{k+\ell}-x_i^k$. Telescoping the sum, we  conclude that 
 $$ \sum_{\ell=0}^{N-1}\delta_\circlearrowleft(x_i^k,x_i^{k+\ell}) = {\omega_{\sigma_i}(k)},$$ and thus, as desired,  $$E_F(\bx^0,\dots,\bx^{N-1})\ge N\sum_{k=1}^{N-1}\prod_{i=1}^df\left(\frac{\omega_{\sigma_i}(k)}{N}\right).$$
    
    Finally, we note that there are two cases for equality in Jensen's inequality in \eqref{eq:jensens}: either $f$ is linear on the relevant domain (which cannot be the case if $f$ is strictly decreasing and logarithmically convex on $(0,1/2]$) or the argument of $F$ is independent of $k$, i.e. \begin{equation*}
       \delta_\circlearrowleft(x_i^k-x_i^{k+\ell})=\delta_\circlearrowleft(x_i^{k'}-x_i^{k'+\ell})\text{ for all }i, \ell, k,k' ,
    \end{equation*} which by their periodic structure is fulfilled by lattices. Conversely, if this condition is satisfied we note that since a unique permutation type means none of the coordinates coincide, the $\ell=1$ case implies that our configuration is a lattice. This proves the desired equivalence.
\end{proof}

We immediately obtain the following corollary:

\begin{cor}\label{cor:lat_type}
Assume that the potential $f$ is decreasing and logarithmically convex on $(0,1/2]$. Let $X \subset \mathbb T^d$ be an $N$-point integration lattice, and let $Y \subset \mathbb T^d$ be an $N$-point configuration with the same permutation type as $X$. Then $$ E_F (Y) \ge E_F (X).$$
\end{cor}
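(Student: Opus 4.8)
The plan is to read this off directly from Theorem~\ref{thm:permtype}, applied to \emph{both} $X$ and $Y$. The key structural observation is that an integration lattice automatically has distinct coordinates in every direction: if $X$ is generated by $(1/N,h_2/N,\dots,h_d/N)$ with $\gcd(h_i,N)=1$, then the $i$-th coordinates of its points are $\{0,h_i/N,2h_i/N,\dots\}$, which are pairwise distinct. Hence $X$ has a \emph{unique} permutation type $S=(\sigma_1,\dots,\sigma_d)$ (with $\sigma_1=\id$ after ordering the first coordinates), and by hypothesis this same $S$ is a permutation type of $Y$.

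First I would apply Theorem~\ref{thm:permtype} to $X$ itself. The only inequality in its proof is the application of Jensen's inequality in \eqref{eq:jensens} to the inner sum $\sum_{k=0}^{N-1}F(\bx^k-\bx^{k+\ell})$; for a lattice the difference $\bx^k-\bx^{k+\ell}$ is independent of $k$, so all the terms being averaged coincide and Jensen's inequality is an \emph{equality} — crucially, this requires no strict-monotonicity assumption on $f$. Combined with the telescoping identity $\sum_{k=0}^{N-1}\delta_\circlearrowleft(x_i^k,x_i^{k+\ell})=\omega_{\sigma_i}(\ell)$ from that proof, this yields the exact formula
\begin{equation*}
E_F(X)=N\sum_{\ell=1}^{N-1}\prod_{i=1}^d f\!\left(\frac{\omega_{\sigma_i}(\ell)}{N}\right).
\end{equation*}
Next I would bound $E_F(Y)$ from below by the same quantity. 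If $Y$ has pairwise distinct coordinates, then $S$ is its unique permutation type and Theorem~\ref{thm:permtype} gives immediately $E_F(Y)\ge N\sum_{\ell=1}^{N-1}\prod_i f(\omega_{\sigma_i}(\ell)/N)=E_F(X)$. If some coordinates of $Y$ coincide, then either $f$ is singular at $0$, in which case $E_F(Y)=+\infty$ and there is nothing to prove, or $f$ is continuous on all of $\bbT$, in which case I perturb $Y$ to a nearby configuration $Y_\eps$ with distinct coordinates, breaking every tie in the direction dictated by $S$ so that $S$ remains the (now unique) permutation type of $Y_\eps$; applying the previous case to $Y_\eps$ and letting $\eps\to 0$, continuity of $E_F$ gives $E_F(Y)=\lim_{\eps\to0}E_F(Y_\eps)\ge E_F(X)$.

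I do not expect a genuine obstacle here, since the corollary is essentially a repackaging of Theorem~\ref{thm:permtype}; the only points that need a little care are the remark that the equality case for lattices does not require $f$ to be \emph{strictly} decreasing (Jensen's inequality is automatically tight when all averaged terms are equal, so the "strictly decreasing" hypothesis in Theorem~\ref{thm:permtype} is only used for the converse characterization of the equality case), and the routine tie-breaking/continuity reduction of the general statement to the case in which $Y$ too has pairwise distinct coordinates.
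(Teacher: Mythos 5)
Your proposal is correct and is essentially the paper's own route: the corollary is stated there as an immediate consequence of Theorem~\ref{thm:permtype}, i.e.\ the lower bound applied to $Y$ combined with the fact that for a lattice the Jensen step is an equality, which is exactly your argument. Your additional care about the equality case not needing strict monotonicity and about perturbing $Y$ when its permutation type is not unique just fills in details the paper leaves implicit, and both points are handled correctly.
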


In other words, lattices minimize the energy among all sets with the same permutation type. Observing that a  small perturbation of a lattice does not change the  permutation type, we arrive directly at local optimality. 

\begin{cor}\label{cor:lattice}
   If a point set $X$ is a lattice and $f$ is decreasing and logarithmically convex on $(0,1/2]$, then $X$ is a local minimum of $E_F$.
\end{cor}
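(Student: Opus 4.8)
The plan is to obtain Corollary~\ref{cor:lattice} from Corollary~\ref{cor:lat_type} by showing that a lattice is rigid with respect to its permutation type under small perturbations. First I would note that an $N$-point lattice $X$ is a Latin hypercube set: in each coordinate direction the values $\{h_i k/N : k=0,\dots,N-1\}$ exhaust $\{0,1/N,\dots,(N-1)/N\}$ because $\gcd(h_i,N)=1$, so all coordinates of $X$ are distinct and $X$ has a \emph{unique} permutation type $S=(\sigma_1,\dots,\sigma_d)$; after a translation we may normalize $\bx^0=\mathbf 0$ and $\sigma_i(0)=0$ for all $i$.

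The key step is then the following. Let $Y=\{\by^k\}_{k=0}^{N-1}$ be a configuration with $\by^k$ within coordinatewise torus-distance $\epsilon$ of $\bx^k$. Since $E_F$ is translation invariant, we may replace $Y$ by $\tilde Y$ with $\tilde\by^k=\by^k-\by^0$; then $\tilde\by^0=\mathbf 0$ and $\tilde Y$ is a perturbation of $X$ of size at most $2\epsilon$. Fixing one point at the origin is what prevents ``wrap-around'': for $k\ge1$ one has $x_i^k\in\{1/N,\dots,(N-1)/N\}$, so once $\epsilon<\tfrac1{4N}$ the canonical $[0,1)$-representative of $\tilde y_i^k$ stays strictly inside $(0,1)$ while $\tilde y_i^0=0$ remains the smallest value in each coordinate; and since the lattice coordinates in each direction are spaced exactly $1/N$ apart, a perturbation of this size leaves their relative order unchanged. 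Hence, provided no two coordinates of $\tilde Y$ coincide, $\tilde Y$ has unique permutation type $S$, and Corollary~\ref{cor:lat_type} gives $E_F(\tilde Y)\ge E_F(X)$, so that $E_F(Y)=E_F(\tilde Y)\ge E_F(X)$. As this holds throughout a $\tfrac1{4N}$-neighborhood of $X$, the lattice $X$ is a local minimum of $E_F$.

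The one subtlety — which I expect to be the only real obstacle, though a mild one — is that a perturbation can force two coordinates of $\tilde Y$ to coincide, in which case $\tilde Y$ lacks a unique permutation type and Corollary~\ref{cor:lat_type} (built on Theorem~\ref{thm:permtype}, where uniqueness of the permutation type is used) does not apply verbatim. This I would dispose of by a density argument: if $f$ is singular at $0$ then such a $Y$ has $E_F(Y)=+\infty\ge E_F(X)$ trivially; otherwise $E_F$ is continuous on $(\mathbb T^d)^N$, the configurations with pairwise distinct coordinates form a dense subset of the neighborhood on which the bound $E_F\ge E_F(X)$ has already been established, and the inequality passes to the closure. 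Checking the precise admissible value of $\epsilon$ and that $S$ is indeed read off correctly from $\tilde Y$ is then routine bookkeeping.
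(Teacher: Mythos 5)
Your argument is correct and follows essentially the paper's own route: the paper simply observes that a small perturbation of a lattice does not change its (unique) permutation type and then invokes Corollary~\ref{cor:lat_type}, exactly as you do, with your translation normalization and the $\epsilon<\tfrac1{4N}$ bound supplying the routine bookkeeping. The only remark is that your closing density step is vacuous: since the lattice coordinates in each direction are spaced exactly $1/N$ apart, perturbations within your stated radius can never make two coordinates of $\tilde Y$ coincide, so Corollary~\ref{cor:lat_type} always applies directly.
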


The last two corollaries  exhibit an example of the behavior discussed in the introduction: certain permutation sets (namely, lattices) minimize the energy among large classes of more general (non-permutation) sets. 

Finally we note that the case  $N=3$ can also be addressed by these methods because this case has only two permutation types in each coordinate, and all permutation sets in this case are lattices (up to the symmetries of the torus). Thus a special case of  Corollary \ref{cor:3} when $f$ is logarithmically convex, can be obtained as a consequence of Corollary \ref{cor:lat_type}. 


\section{The cases $N=4$ and $N=6$ in $\mathbb{T}^2$}\label{sec:N4}

In the setting of $\mathbb{T}^2$, the cases $N=4$ and $N=6$ are unique: the only possible lattices are the identity lattice that places all points on $y=x$, and its reflection that places all points on $y=1-x$. However, unlike in the case $N=3$ (which like $N=5$ is covered by Corollary \ref{cor:3}) there are other permutation sets that are not one of these two cases. 

We first observe that the permutation set whose permutation type is the identity cannot strictly minimize the  energy, even among permutation sets. 


\begin{prop} \label{prop:id}
    Let $N \in \bbN$ and let $f: \bbT \rightarrow [0, \infty)$ be a potential. 
    Then, for all permutations $\sigma$ of $\{0, 1, \dots, N-1\}$,
     we have
    $$
    E_F(X_{\id}) \geq E_F(X_\sigma),
    $$
    where $\id$ denotes the identity permutation and $X_\sigma$ is a permutation set with permutation type $\sigma$
     (see \eqref{e.permutation}).
\end{prop}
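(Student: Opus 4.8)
The plan is to write both $E_F(X_{\id})$ and $E_F(X_\sigma)$ as sums over ordered pairs of distinct indices, observe that passing from $X_{\id}$ to $X_\sigma$ permutes — but does not alter — the list of $y$-coordinate differences, and then apply the Cauchy--Schwarz inequality.

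First I would set up notation: for $k \neq \ell$ in $\{0, 1, \dots, N-1\}$ write $a_{k\ell} = f\!\left(\frac{k-\ell}{N}\right)$ and $b_{k\ell} = f\!\left(\frac{\sigma(k)-\sigma(\ell)}{N}\right)$, both nonnegative since $f \ge 0$ (and finite, as the arguments are never $0$ in $\bbT$ when $k \neq \ell$). Directly from the definition \eqref{e.energy} of the tensor product energy applied to the permutation sets in \eqref{e.permutation},
$$E_F(X_{\id}) = \sum_{\substack{k,\ell = 0 \\ k \neq \ell}}^{N-1} a_{k\ell}^2 \qquad \text{and} \qquad E_F(X_\sigma) = \sum_{\substack{k,\ell = 0 \\ k \neq \ell}}^{N-1} a_{k\ell}\, b_{k\ell}.$$

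The key step is the observation that, since $\sigma$ is a bijection of $\{0, 1, \dots, N-1\}$, the map $(k,\ell) \mapsto (\sigma(k), \sigma(\ell))$ is a bijection of the set of ordered pairs of distinct indices onto itself. Hence
$$\sum_{\substack{k,\ell = 0 \\ k \neq \ell}}^{N-1} b_{k\ell}^2 = \sum_{\substack{k,\ell = 0 \\ k \neq \ell}}^{N-1} f\!\left(\frac{\sigma(k)-\sigma(\ell)}{N}\right)^2 = \sum_{\substack{k,\ell = 0 \\ k \neq \ell}}^{N-1} f\!\left(\frac{k-\ell}{N}\right)^2 = \sum_{\substack{k,\ell = 0 \\ k \neq \ell}}^{N-1} a_{k\ell}^2,$$
i.e. the vectors $a = (a_{k\ell})$ and $b = (b_{k\ell})$ in $\bbR^{N(N-1)}$ have the same Euclidean norm. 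Then Cauchy--Schwarz over this index set yields
$$E_F(X_\sigma) = \langle a, b \rangle \leq \|a\|\,\|b\| = \|a\|^2 = \langle a, a \rangle = E_F(X_{\id}),$$
which is exactly the claim.

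There is essentially no obstacle: the content of the proposition is the simple structural fact that permuting the second coordinate of a permutation set preserves the $\ell^2$-norm of the list of $y$-differences, so the energy is the inner product of two vectors of equal, fixed length, which is largest when they coincide — and that happens precisely for $X_{\id}$ (and its reflection). If one wished to analyze the equality case, it would amount to the proportionality condition $b = \lambda a$ with $\lambda \ge 0$, forcing $\lambda = 1$; but since the proposition asserts only a non-strict inequality, this is not needed.
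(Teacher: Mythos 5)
Your proof is correct and follows essentially the same route as the paper: both arguments hinge on the observation that, since $\sigma$ is a bijection, the list of $y$-coordinate interaction values $f\bigl(\tfrac{\sigma(k)-\sigma(\ell)}{N}\bigr)$ is just a permutation of the list $f\bigl(\tfrac{k-\ell}{N}\bigr)$, so the energy is a sum of products of a fixed nonnegative vector with a rearranged copy of itself. The paper closes with the rearrangement inequality where you use Cauchy--Schwarz, but in this situation the two are interchangeable one-line finishes, so there is no substantive difference.
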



\begin{proof}
    The energy of $X_\sigma$ takes the form
    $$
    E_F(X_\sigma) = \sum_{\substack{k, \ell = 0 \\ k \neq \ell}}^{N-1} f\left(\frac{k-\ell}N\right) f\left(\frac{\sigma(k)-\sigma(\ell)}N\right).
    $$
    As multisets we have
    \begin{align*}
        & \left\{f\left(\frac{k-\ell}N\right): k, \ell \in \{0, 1, \dots, N-1\}, k \neq \ell\right\} \\
        = & \left\{f\left(\frac{\sigma(k)-\sigma(\ell)}N\right): k, \ell \in \{0, 1, \dots, N-1\}, k \neq \ell\right\},
    \end{align*}
    by the rearrangement inequality \cite{HLP88} the energy $E_F$ is thus maximized if
    $$
    f\left(\frac{k-\ell}N\right) = f\left(\frac{\sigma(k)-\sigma(\ell)}N\right)
    $$
    for all $k, \ell$, which is the case for the identity permutation.
\end{proof}

Thus, for $N=4$ and $6$, candidates for energy minimizers must be something other than a lattice. We shall consider these cases in detail. 

\subsection{The $N=4$ case}

In the case $N=4$, up to symmetries of the torus, there is only one other permutation set besides the identity permutation, namely
\begin{align} \label{opt_4_perm}
    X^{(4)} \coloneqq \{(0,0),(1/4,1/2),(1/2,1/4),(3/4,3/4)\},
\end{align}
which by Proposition \ref{prop:id} minimizes the energy among all $4$-point permutation sets. If $f$ is differentiable on $(0, 1)$, 
one can perform gradient descent to approach a local minimizer. Following off of an observation in \cite{HO16} we claim that with appropriate convexity conditions this will actually yield a global minimizer, and we present a simple condition how to determine it.

\begin{prop}\label{prop:n4}
    Let  $f$ be differentiable on $(0,1)$, as well as decreasing   and logarithmically convex on $(0,1/2]$. Then there exists an $a\in(0,1/2)$ such that the $4$-point configuration $$X_a^{(4)}\coloneqq\{(0,0),(a,1/2),(1/2,a),(1/2+a,1/2+a)\}$$  is a  minimizer of $E_F$. This value of $a$ satisfies \begin{equation}\label{eq:n4_condition}
        f'(a)f(1/2)=f'(1/2-a)f(1/2-a).
    \end{equation}
    If $f$ is strictly logarithmically convex, this value of $a$ is unique.
\end{prop}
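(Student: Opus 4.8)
The plan is to reduce the $N=4$ minimization problem to a one-parameter optimization and then analyze that scalar problem using the logarithmic convexity of $f$. First I would argue that, by Corollary \ref{c.noequal} (applicable since differentiability on $(0,1)$ and log-convexity on $(0,1/2]$ force a sharp kink at $0$, so assumption (ii) of Theorem \ref{prop:coordinate_improvement} holds up to replacing $f$ by a harmless normalization — or more cheaply, one simply checks directly that $f$ decreasing and log-convex on $(0,1/2]$ has a strictly negative right derivative at $0$), any global minimizer has distinct coordinates. Then I would try to show, using Proposition \ref{prop:id} together with a convexity/rearrangement argument, that a minimizer must in fact be a permutation-set-type configuration of the form $X_a^{(4)}$ up to torus symmetry; the point is that $X^{(4)}$ in \eqref{opt_4_perm} is the only non-identity permutation type for $N=4$, and the energy as a function of the configuration, once the combinatorial type is fixed, can be taken to depend only on the single horizontal displacement $a$ by translation invariance and the structure of the type. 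This step is where I expect the main obstacle: one must rule out that the true global minimizer has a more exotic structure (not lying on the $4$-point grid, or with a different arrangement), and the cleanest route is probably to invoke Theorem \ref{thm:permtype}/Corollary \ref{cor:lat_type} to handle all lattice-type configurations and then a separate direct argument (or a symmetrization) to reduce the remaining permutation types to the family $X_a^{(4)}$.

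Granting the reduction, I would then write the energy of $X_a^{(4)}$ explicitly. The six unordered pairs give distance vectors: between $(0,0)$ and $(a,1/2)$, between $(0,0)$ and $(1/2,a)$, between $(0,0)$ and $(1/2+a,1/2+a)$, between $(a,1/2)$ and $(1/2,a)$, between $(a,1/2)$ and $(1/2+a,1/2+a)$, and between $(1/2,a)$ and $(1/2+a,1/2+a)$. Using evenness of $f$ one collects these into
\begin{equation*}
    \tfrac12 E_F(X_a^{(4)}) = 2 f(a) f(\tfrac12) + 2 f(\tfrac12 - a) f(\tfrac12 - a) + \text{(terms independent of }a\text{ or symmetric)},
\end{equation*}
so that, after carefully bookkeeping, $E_F(X_a^{(4)})$ is (up to an additive constant and a positive factor) a smooth function $\Psi(a)$ on $(0,1/2)$. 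Differentiating, the stationarity condition $\Psi'(a)=0$ becomes exactly \eqref{eq:n4_condition}: $f'(a) f(1/2) = f'(1/2-a) f(1/2-a)$.

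Finally, to prove existence and (under strict log-convexity) uniqueness of the minimizing $a$, I would study $\Phi(a) := \log f(a) + \log f(1/2) $ versus $2\log f(1/2-a)$ — i.e. compare $\log$ of the two relevant product terms. Set $\psi(a) = \log f(a) - 2\log f(1/2 - a)$; then log-convexity of $f$ on $(0,1/2]$ makes $\log f$ convex, hence $\psi'(a) = (\log f)'(a) + 2(\log f)'(1/2-a)$ is monotone increasing in $a$ (sum of an increasing function and an increasing function of $1/2-a$ reflected — here one must be slightly careful with signs, but the net effect of convexity is monotonicity of $\psi'$), which gives at most one stationary point; strict log-convexity makes it strictly monotone, hence at most one. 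For existence I would examine the boundary behavior as $a \to 0^+$ and $a \to (1/2)^-$: as $a\to 0^+$ the term $f(a)f(1/2)$ is comparatively large (so the derivative pushes $a$ up), while as $a \to (1/2)^-$ the term $f(1/2-a)^2 \to f(0^+)^2$ blows up relative to $f(1/2)^2$, forcing the derivative to have the opposite sign; by the intermediate value theorem there is an interior critical point, which by the monotonicity of $\psi'$ is the unique global minimum of $\Psi$ on $(0,1/2)$. The one genuinely delicate bookkeeping point will be making sure all six pairwise terms are accounted for with the correct multiplicities and that the "$a$-independent" terms really are independent of $a$ — this is routine but must be done with care.
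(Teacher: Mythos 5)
Your overall strategy (reduce to the non-identity permutation type, then analyze a one-parameter family) is the same as the paper's, but two steps that you leave as hopes are precisely where the content lies, and one of them as you sketch it would fail. First, the reduction: after fixing $\bx^0=(0,0)$, the configurations sharing the permutation type of $X^{(4)}$ form a $6$-parameter convex set, not a $1$-parameter family, so your claim that "the energy ... can be taken to depend only on the single horizontal displacement $a$ by translation invariance and the structure of the type" is false; translation invariance has already been spent fixing $\bx^0$. The paper closes exactly this gap as follows: log-convexity of $f$ makes $F$ convex, hence $E_F$ is a convex function of the six free coordinates on the convex set of configurations of a fixed permutation type, so it suffices to check that $X_a^{(4)}$ (with $a$ solving \eqref{eq:n4_condition}) is stationary with respect to \emph{all six} coordinates, not merely with respect to $a$; convexity then promotes this stationary point to the minimizer within its type, and Proposition \ref{prop:id} together with Corollary \ref{cor:lat_type} disposes of the only other type (the identity lattice), since its minimizer is not better than $X^{(4)}_{1/4}$ and hence not better than $\min_a E_F(X_a^{(4)})$. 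Without the full-gradient stationarity check (or a genuine symmetrization argument, which you mention but do not carry out), minimizing over the family $X_a^{(4)}$ alone proves nothing about the global problem.

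Second, your existence/uniqueness analysis of the critical $a$ is both miscounted and, in the uniqueness part, invalid as argued. The correct bookkeeping gives $\tfrac12 E_F(X_a^{(4)})=4f(a)f(1/2)+2f(1/2-a)^2$ (four of the six pairs have distance vector $(a,1/2)$, two have $(1/2-a,1/2-a)$, and there are no $a$-independent terms); with your provisional count $2f(a)f(1/2)+2f(1/2-a)^2$ the stationarity equation would acquire a spurious factor $2$ and would \emph{not} be \eqref{eq:n4_condition}. For uniqueness, your claim that $\psi'(a)=(\log f)'(a)+2(\log f)'(1/2-a)$ is monotone does not follow from convexity of $\log f$: the first summand is nondecreasing in $a$ while the second is nonincreasing, so their sum need not be monotone. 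The paper's route is simpler and correct: since log-convexity implies convexity of $f$ and of $f^2$, the one-variable energy $a\mapsto 4f(a)f(1/2)+2f(1/2-a)^2$ is convex on $(0,1/2)$, strictly convex when $f$ is strictly log-convex, which yields at most one critical point; existence follows because the endpoint configurations ($a=0,1/2$) have coinciding coordinates and are excluded as minimizers by Corollary \ref{c.noequal}. Your boundary sign heuristic for existence points in the right direction but should be replaced by (or made rigorous via) this convexity argument.
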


We would like to point out that in \cite{HO16}  such a set was obtained as the energy minimizer of $E_F$ with  $f(t) = \frac12 - t + t^2$ (Fig. \ref{fig:four_configs}) using a proof assisted with numerics. Proposition \ref{prop:n4} provides a simple way to find such minimizers for a wide range of potentials and admits a transparent proof, which we now present.

\begin{proof}
    We note that $f$ being logarithmically convex results in the tensor product potential $F$ being convex on $(0,1)\times(0,1)$, see \cite{SS12b}. This means that as a function of $\bx^i$ and $\bx^j$, the expression $F(\bx^i-\bx^j)$ is convex when the order  of the $x$- and $y$-coordinates of the two points is fixed. Since the sum of convex functions is convex, we  conclude that $E_F$ is convex as a function of the coordinates of our three unfixed points (we assume $\bx^0=(0,0)$) when we restrict it to point configurations of a specific permutation type. Viewing the coordinates of our three unfixed points as an element of $[0,1)^6$, we observe that a convex combination of two element in this space that are of the same permutation type will result in another element of the same permutation type. Therefore the set of all configurations of the same permutation type is a convex set and thus any local minimizer of $E_F$ in this set is a minimizer among all point sets of the same permutation type.

\begin{figure}[htp]
\hspace{1.1cm}
						\begin{tikzpicture}
							\begin{axis}[
								plot box ratio = 1 1,
								xmin=-1/21,xmax=22/21,ymin=-1/21,ymax=22/21,
								font=\footnotesize,
								height=0.35\textwidth,
								width=0.35\textwidth,
								xticklabel = \empty,
								yticklabel = \empty,
								ytick style={draw=none},
								xtick style={draw=none},
								]
								\addplot[only marks,black,mark=*,mark size=2pt,mark options={solid}] coordinates {
									(0, 0) (1/4, 1/2) (1/2, 1/4) (3/4, 3/4)
								};
								\addplot[gray,mark options={solid}] coordinates {
									(0, -0.05) (0, 1.05)
								};
								\addplot[gray,mark options={solid}] coordinates {
									(1.0, -0.05) (1.0, 1.05)
								};
								\addplot[gray,mark options={solid}] coordinates {
									(-0.05, 0) (1.05, 0)
								};
								\addplot[gray,mark options={solid}] coordinates {
									(-0.05, 1.0) (1.05, 1.0)
								};
								\addplot[gray,mark options={solid}, dotted] coordinates {
									(0, 1/4) (1, 1/4)
								};
								\addplot[gray,mark options={solid}, dotted] coordinates {
									(0, 1/2) (1, 1/2)
								};
								\addplot[gray,mark options={solid}, dotted] coordinates {
									(1/4, 0) (1/4, 1)
								};
								\addplot[gray,mark options={solid}, dotted] coordinates {
									(1/2, 0) (1/2, 1)
								};
								\addplot[gray,mark options={solid}, dotted] coordinates {
									(0, 3/4) (1, 3/4)
								};
								\addplot[gray,mark options={solid}, dotted] coordinates {
									(3/4, 0) (3/4, 1)
								};
							\end{axis}
						\end{tikzpicture}
						\begin{tikzpicture}
							\begin{axis}[
								plot box ratio = 1 1,
								xmin=-1/21,xmax=22/21,ymin=-1/21,ymax=22/21,
								font=\footnotesize,
								height=0.35\textwidth,
								width=0.35\textwidth,
								xticklabel = \empty,
								yticklabel = \empty,
								ytick style={draw=none},
								xtick style={draw=none},
								]
								\addplot[only marks,black,mark=*,mark size=2pt,mark options={solid}] coordinates {
									(0, 0) (0.226698825758202, 1/2) (1/2, 0.226698825758202) (0.726698825758202, 0.726698825758202)
								};
								\addplot[gray,mark options={solid}] coordinates {
									(0, -0.05) (0, 1.05)
								};
								\addplot[gray,mark options={solid}] coordinates {
									(1.0, -0.05) (1.0, 1.05)
								};
								\addplot[gray,mark options={solid}] coordinates {
									(-0.05, 0) (1.05, 0)
								};
								\addplot[gray,mark options={solid}] coordinates {
									(-0.05, 1.0) (1.05, 1.0)
								};
								\addplot[gray,mark options={solid}, dotted] coordinates {
									(0, 1/4) (1, 1/4)
								};
								\addplot[gray,mark options={solid}, dotted] coordinates {
									(0, 1/2) (1, 1/2)
								};
								\addplot[gray,mark options={solid}, dotted] coordinates {
									(1/4, 0) (1/4, 1)
								};
								\addplot[gray,mark options={solid}, dotted] coordinates {
									(1/2, 0) (1/2, 1)
								};
								\addplot[gray,mark options={solid}, dotted] coordinates {
									(0, 3/4) (1, 3/4)
								};
								\addplot[gray,mark options={solid}, dotted] coordinates {
									(3/4, 0) (3/4, 1)
								};
							\end{axis}
						\end{tikzpicture}
\caption{The best $4$-point permutation set and the energy minimizer $X_a^{(4)}$ for the potential $f(t) = \frac12 - t + t^2$. Here, $a = 0.22669\dots$ is the solution to the cubic equation $2a^3+a-\frac14=0$.}
\label{fig:four_configs}
\end{figure}
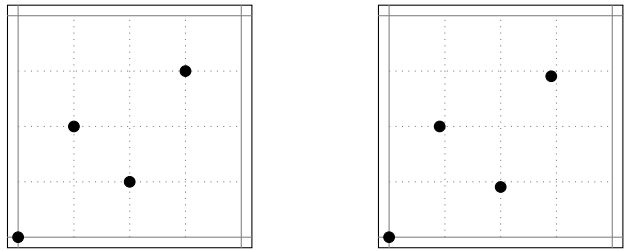

    From Corollary \ref{cor:lat_type} we know that the identity lattice minimizes the energy among sets of the same permutation type, however by Proposition \ref{prop:id} it will not be a (unique) minimizer among all $4$-point configurations in $\bbT^2$. Considering torus symmetries, we need to check only one other permutation type, namely the one of $X_a^{(4)}$. Thus, we only need to show that, for $a$ satisfying \eqref{eq:n4_condition}, $X_a^{(4)}$ is stationary with respect to $E_F$ in order to demonstrate  that it is an energy minimizer. For this, we consider the gradient of the energy functional $E_F(\mathbf 0, \bx^1, \bx^2, \bx^3)$ in terms of the $6$ variables $\bx^1 = (x_1^1, x_2^1)$, $\bx^2=(x_1^2, x_2^2)$, $\bx^3=(x_1^3, x_2^3)$. Taking the partial derivative with respect to any of these $6$ coordinates, we see that it is $0$ if and only if \eqref{eq:n4_condition} holds. We thus conclude that $X_a^{(4)}$ is a minimizer. We know such an $a$ exists because $$E_F(X_a^{(4)})=4f(a)f(1/2)+2f(1/2-a)^2$$ is a convex function in $a$ and thus must have a minimum (the cases of $a=0$ and $a=1/2$ are not minimizers by Corollary \ref{c.noequal}). If $f$ is strictly logarithmically convex then $E_F(X_a^{(4)})$ is a strictly convex function of $a$ and has a unique minimizer, so \eqref{eq:n4_condition} is satisfied by only one value of $a$.
\end{proof}

With this result we reduce the complexity of the problem down to a single variable, making it much more approachable for potentials satisfying the given condition. If we consider the potential given in \eqref{eq:qmc_potential}, for example, we find that the energy minimizer is $X_a^{(4)}$ with $a = a(p)$ satisfying the cubic
$$
\frac{p^2}2a^3+2p\left(1-\frac p{24}\right)a-\frac p2\left(1-\frac p{24}\right) = 0.
$$
It can be noted that for $0 < p \leq 6$, where \eqref{eq:qmc_potential} is logarithmically convex, we have $|a(p)-\frac14| < 0.024$. This phenomenon, that global optimizers do not stray too far from permutation sets, was first observed in \cite{HO16}. Figure \ref{fig:four_configs} shows the global minimizer for the concrete potential of the periodic $L_2$-discrepancy, equivalently $p=6$ above.

\begin{rem}\label{rem:n4not}
We finish this subsection by noting that the permutation set \eqref{opt_4_perm}, i.e. $X_{1/4}^{(4)}$, will only satisfy the condition given in \eqref{eq:n4_condition} if $f(1/2)=f(1/4)$, in which case it will also yield the same energy as the identity permutation. Thus, for potentials that satisfy the conditions of Proposition \ref{prop:n4}, there cannot be a unique energy minimizer that is a permutation set.
\end{rem}


\subsection{The $N=6$ case}

In the $N=6$ case there are many more possible permutation sets, with a total of $9$ cases when taking symmetries into account, see Figure \ref{fig:six_configs}. However, assuming that $f$ is decreasing on $(0,1/2]$ (so that $f(1/6)\ge f(1/3)\ge f(1/2)$) and convex (so that $2f(1/3)\le f(1/6)+f(1/2)$) it can be shown with a direct calculation that, up to symmetries, permutation set with the lowest energy  is $$X^{(6)}=\{(0,0),(1/6,1/3),(1/3,2/3),(1/2,1/6),(2/3,5/6),(5/6,1/2)\},$$ which up to symmetries is the only permutation set that does not have $(1/6,1/6)$ as a distance vector.

We cannot deduce the global optimality of $X_a^{(6)}$ as in Proposition \ref{prop:n4} because unlike the $N=4$ case, we have multiple non-lattice permutation types that remain distinct under reflection, rotation, and translation. However, we can construct  a  configuration which is optimal among all sets with  the same permutation type as $X^{(6)}$.

\begin{prop}\label{prop:n6}
    Assume  $f$ is differentiable on $(0,1)$, as well as   decreasing and logarithmically convex on $(0,1/2]$. Then there exists  $a\in(0,1/4)$ such that the point configuration 
    \begin{align*}X_a^{(6)}\coloneqq\{&(0,0),(a,1/2-a),(2a,1-2a),(1/2,1/2-2a),\\&(a+1/2,1-a),(1/2+2a,1/2)\}\end{align*}  is a  minimizer of $E_F$  among all $6$-point configurations with the same permutation type. This value of $a$ satisfies    \begin{equation}\label{eq:n6_condition}
    f'(a)f(1/2-a)+f'(2a)f(2a)=f'(1/2-a)f(a)+f'(1/2-2a)f(1/2).    
    \end{equation}
    If $f$ is strictly logarithmically convex, this value of $a$ is unique.
    
\end{prop}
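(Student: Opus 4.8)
The plan is to adapt the proof of Proposition~\ref{prop:n4}. Because $f$ is logarithmically convex, $E_F$ is convex when restricted to configurations of a fixed permutation type, so it suffices to find a value of $a$ for which $X_a^{(6)}$ is a critical point of $E_F$; by convexity it will then automatically minimize $E_F$ among all configurations of that permutation type.

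First I would check that $X_a^{(6)}$ has the desired permutation type. For $a\in(0,1/4)$ the first coordinates occur in the order $0<a<2a<\tfrac12<\tfrac12+a<\tfrac12+2a$ and the second coordinates in the order $0<\tfrac12-2a<\tfrac12-a<\tfrac12<1-2a<1-a$; these are exactly the orderings realised by $X^{(6)}=X_{1/6}^{(6)}$, so every $X_a^{(6)}$ with $a\in(0,1/4)$ has distinct coordinates and the same (unique) permutation type as $X^{(6)}$, and it lies in the interior of the convex region $R$ of all configurations of that type (after normalising $\bx^0=\mathbf0$). As in the proofs of Proposition~\ref{prop:n4} and Theorem~\ref{thm:permtype}, log-convexity of $f$ on $(0,1/2]$ --- hence, by evenness, on all of $(0,1)$ --- makes $F$ convex on $(0,1)^2$ \cite{SS12b}; on $R$ every counterclockwise coordinate difference is affine with values in $(0,1)$, so $E_F|_R$ is a sum of convex functions and hence convex. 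Thus a critical point of $E_F$ in the interior of $R$ is a minimizer of $E_F$ over all $6$-point configurations of this permutation type.

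Next I would compute $\nabla E_F$ at $X_a^{(6)}$. Writing $\partial E_F/\partial x_1^j=2\sum_{\ell\ne j}f'(x_1^j-x_1^\ell)\,f(\delta(x_2^j,x_2^\ell))$ (and symmetrically for $x_2^j$), substituting the explicit coordinates, and simplifying with the facts that $f$ is even, $f'$ is odd and $1$-periodic, and $f'(1/2)=0$, one checks pair by pair over the $15$ interactions that each of the ten partial derivatives either vanishes identically in $a$ or equals $\pm 2\,\Delta(a)$, where
$$\Delta(a):=\big[f'(a)f(\tfrac12-a)+f'(2a)f(2a)\big]-\big[f'(\tfrac12-a)f(a)+f'(\tfrac12-2a)f(\tfrac12)\big].$$
Hence $X_a^{(6)}$ is a critical point of $E_F$ exactly when $\Delta(a)=0$, which is condition~\eqref{eq:n6_condition}. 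This bookkeeping --- keeping track of the correct branch of each coordinate difference, and hence of the sign of $f'$ --- is the step I expect to be the main, though entirely elementary, obstacle.

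Finally, to produce such an $a$ I would set $h(a):=E_F(X_a^{(6)})$; expanding the energy gives
$$h(a)=16\,f(a)f(\tfrac12-a)+4\,f(2a)^2+8\,f(\tfrac12)f(\tfrac12-2a)+2\,f(\tfrac12)^2,$$
so $h'(a)=16\,\Delta(a)$, and $h$ is convex on $(0,1/4)$ as the restriction of the convex function $E_F|_R$ to the affine curve $a\mapsto X_a^{(6)}$; thus $\Delta$ is nondecreasing. We may assume $f$ is not constant on $(0,1/2]$ (otherwise $E_F$ is constant on this permutation type and there is nothing to prove), and then log-convexity together with monotonicity forces $f'(0^+)<0$. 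A short computation using $f'(1/2)=0$ gives $\lim_{a\to0^+}\Delta(a)=f'(0^+)\big(f(0^+)+f(\tfrac12)\big)<0$ and $\lim_{a\to1/4^-}\Delta(a)=-f'(0^+)f(\tfrac12)>0$, so the nondecreasing continuous function $\Delta$ has a zero $a^*\in(0,1/4)$; by the convexity argument above, $X_{a^*}^{(6)}$ then minimizes $E_F$ over all $6$-point configurations of this permutation type. If $f$ is strictly logarithmically convex then $F$ is strictly convex on $(0,1)^2$, hence $E_F|_R$ and $h$ are strictly convex, $\Delta=h'/16$ is strictly increasing, and $a^*$ is unique.
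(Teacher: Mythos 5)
Your proof is correct and follows essentially the same route as the paper: log-convexity makes $E_F$ convex on the set of configurations of a fixed permutation type, the ten partial derivatives at $X_a^{(6)}$ vanish exactly when \eqref{eq:n6_condition} holds (your bookkeeping that each is $0$ or $\pm2\Delta(a)$ checks out), and existence and uniqueness of $a$ follow from (strict) convexity of $a\mapsto E_F(X_a^{(6)})$, whose derivative is $16\Delta(a)$. The only deviation is that you rule out boundary minima via the explicit endpoint signs $\Delta(0^+)<0<\Delta(1/4^-)$ (using $f'(0^+)<0$ for non-constant $f$) rather than the paper's appeal to the $N=4$ argument and Corollary \ref{c.noequal}, a harmless and slightly more self-contained variant.
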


\begin{figure}[H]
						\hspace{-0.85cm} \begin{tikzpicture}
							\begin{axis}[
								plot box ratio = 1 1,
								xmin=-1/21,xmax=22/21,ymin=-1/21,ymax=22/21,
								font=\footnotesize,
								height=0.33\textwidth,
								width=0.33\textwidth,
								xticklabel = \empty,
								yticklabel = \empty,
								ytick style={draw=none},
								xtick style={draw=none},
								]
								\addplot[only marks,black,mark=*,mark size=2pt,mark options={solid}] coordinates {
									(0, 0) (1/6, 1/6) (1/3, 1/3) (1/2, 1/2) (2/3, 2/3) (5/6, 5/6)
								};
								\addplot[gray,mark options={solid}] coordinates {
									(0, -0.05) (0, 1.05)
								};
								\addplot[gray,mark options={solid}] coordinates {
									(1.0, -0.05) (1.0, 1.05)
								};
								\addplot[gray,mark options={solid}] coordinates {
									(-0.05, 0) (1.05, 0)
								};
								\addplot[gray,mark options={solid}] coordinates {
									(-0.05, 1.0) (1.05, 1.0)
								};
								\addplot[gray,mark options={solid}, dotted] coordinates {
									(0, 1/6) (1, 1/6)
								};
								\addplot[gray,mark options={solid}, dotted] coordinates {
									(0, 1/3) (1, 1/3)
								};
								\addplot[gray,mark options={solid}, dotted] coordinates {
									(0, 1/2) (1, 1/2)
								};
								\addplot[gray,mark options={solid}, dotted] coordinates {
									(1/6, 0) (1/6, 1)
								};
								\addplot[gray,mark options={solid}, dotted] coordinates {
									(1/3, 0) (1/3, 1)
								};
								\addplot[gray,mark options={solid}, dotted] coordinates {
									(1/2, 0) (1/2, 1)
								};
								\addplot[gray,mark options={solid}, dotted] coordinates {
									(0, 2/3) (1, 2/3)
								};
								\addplot[gray,mark options={solid}, dotted] coordinates {
									(0, 5/6) (1, 5/6)
								};
								\addplot[gray,mark options={solid}, dotted] coordinates {
									(2/3, 0) (2/3, 1)
								};
								\addplot[gray,mark options={solid}, dotted] coordinates {
									(5/6, 0) (5/6, 1)
								};
							\end{axis}
						\end{tikzpicture}
                        \begin{tikzpicture}
							\begin{axis}[
								plot box ratio = 1 1,
								xmin=-1/21,xmax=22/21,ymin=-1/21,ymax=22/21,
								font=\footnotesize,
								height=0.33\textwidth,
								width=0.33\textwidth,
								xticklabel = \empty,
								yticklabel = \empty,
								ytick style={draw=none},
								xtick style={draw=none},
								]
								\addplot[only marks,black,mark=*,mark size=2pt,mark options={solid}] coordinates {
									(0, 0) (1/6, 1/6) (1/3, 1/3) (1/2, 1/2) (2/3, 5/6) (5/6, 2/3)
								};
								\addplot[gray,mark options={solid}] coordinates {
									(0, -0.05) (0, 1.05)
								};
								\addplot[gray,mark options={solid}] coordinates {
									(1.0, -0.05) (1.0, 1.05)
								};
								\addplot[gray,mark options={solid}] coordinates {
									(-0.05, 0) (1.05, 0)
								};
								\addplot[gray,mark options={solid}] coordinates {
									(-0.05, 1.0) (1.05, 1.0)
								};
								\addplot[gray,mark options={solid}, dotted] coordinates {
									(0, 1/6) (1, 1/6)
								};
								\addplot[gray,mark options={solid}, dotted] coordinates {
									(0, 1/3) (1, 1/3)
								};
								\addplot[gray,mark options={solid}, dotted] coordinates {
									(0, 1/2) (1, 1/2)
								};
								\addplot[gray,mark options={solid}, dotted] coordinates {
									(1/6, 0) (1/6, 1)
								};
								\addplot[gray,mark options={solid}, dotted] coordinates {
									(1/3, 0) (1/3, 1)
								};
								\addplot[gray,mark options={solid}, dotted] coordinates {
									(1/2, 0) (1/2, 1)
								};
								\addplot[gray,mark options={solid}, dotted] coordinates {
									(0, 2/3) (1, 2/3)
								};
								\addplot[gray,mark options={solid}, dotted] coordinates {
									(0, 5/6) (1, 5/6)
								};
								\addplot[gray,mark options={solid}, dotted] coordinates {
									(2/3, 0) (2/3, 1)
								};
								\addplot[gray,mark options={solid}, dotted] coordinates {
									(5/6, 0) (5/6, 1)
								};
							\end{axis}
						\end{tikzpicture}
						\begin{tikzpicture}
							\begin{axis}[
								plot box ratio = 1 1,
								xmin=-1/21,xmax=22/21,ymin=-1/21,ymax=22/21,
								font=\footnotesize,
								height=0.33\textwidth,
								width=0.33\textwidth,
								xticklabel = \empty,
								yticklabel = \empty,
								ytick style={draw=none},
								xtick style={draw=none},
								]
								\addplot[only marks,black,mark=*,mark size=2pt,mark options={solid}] coordinates {
									(0, 0) (1/6, 1/6) (1/3, 1/3) (1/2, 5/6) (2/3, 2/3) (5/6, 1/2)
								};
								\addplot[gray,mark options={solid}] coordinates {
									(0, -0.05) (0, 1.05)
								};
								\addplot[gray,mark options={solid}] coordinates {
									(1.0, -0.05) (1.0, 1.05)
								};
								\addplot[gray,mark options={solid}] coordinates {
									(-0.05, 0) (1.05, 0)
								};
								\addplot[gray,mark options={solid}] coordinates {
									(-0.05, 1.0) (1.05, 1.0)
								};
								\addplot[gray,mark options={solid}, dotted] coordinates {
									(0, 1/6) (1, 1/6)
								};
								\addplot[gray,mark options={solid}, dotted] coordinates {
									(0, 1/3) (1, 1/3)
								};
								\addplot[gray,mark options={solid}, dotted] coordinates {
									(0, 1/2) (1, 1/2)
								};
								\addplot[gray,mark options={solid}, dotted] coordinates {
									(1/6, 0) (1/6, 1)
								};
								\addplot[gray,mark options={solid}, dotted] coordinates {
									(1/3, 0) (1/3, 1)
								};
								\addplot[gray,mark options={solid}, dotted] coordinates {
									(1/2, 0) (1/2, 1)
								};
								\addplot[gray,mark options={solid}, dotted] coordinates {
									(0, 2/3) (1, 2/3)
								};
								\addplot[gray,mark options={solid}, dotted] coordinates {
									(0, 5/6) (1, 5/6)
								};
								\addplot[gray,mark options={solid}, dotted] coordinates {
									(2/3, 0) (2/3, 1)
								};
								\addplot[gray,mark options={solid}, dotted] coordinates {
									(5/6, 0) (5/6, 1)
								};
							\end{axis}
						\end{tikzpicture}

                        \hspace{-0.85cm} \begin{tikzpicture}
							\begin{axis}[
								plot box ratio = 1 1,
								xmin=-1/21,xmax=22/21,ymin=-1/21,ymax=22/21,
								font=\footnotesize,
								height=0.33\textwidth,
								width=0.33\textwidth,
								xticklabel = \empty,
								yticklabel = \empty,
								ytick style={draw=none},
								xtick style={draw=none},
								]
								\addplot[only marks,black,mark=*,mark size=2pt,mark options={solid}] coordinates {
									(0, 0) (1/6, 1/6) (1/3, 1/3) (1/2, 5/6) (2/3, 1/2) (5/6, 2/3)
								};
								\addplot[gray,mark options={solid}] coordinates {
									(0, -0.05) (0, 1.05)
								};
								\addplot[gray,mark options={solid}] coordinates {
									(1.0, -0.05) (1.0, 1.05)
								};
								\addplot[gray,mark options={solid}] coordinates {
									(-0.05, 0) (1.05, 0)
								};
								\addplot[gray,mark options={solid}] coordinates {
									(-0.05, 1.0) (1.05, 1.0)
								};
								\addplot[gray,mark options={solid}, dotted] coordinates {
									(0, 1/6) (1, 1/6)
								};
								\addplot[gray,mark options={solid}, dotted] coordinates {
									(0, 1/3) (1, 1/3)
								};
								\addplot[gray,mark options={solid}, dotted] coordinates {
									(0, 1/2) (1, 1/2)
								};
								\addplot[gray,mark options={solid}, dotted] coordinates {
									(1/6, 0) (1/6, 1)
								};
								\addplot[gray,mark options={solid}, dotted] coordinates {
									(1/3, 0) (1/3, 1)
								};
								\addplot[gray,mark options={solid}, dotted] coordinates {
									(1/2, 0) (1/2, 1)
								};
								\addplot[gray,mark options={solid}, dotted] coordinates {
									(0, 2/3) (1, 2/3)
								};
								\addplot[gray,mark options={solid}, dotted] coordinates {
									(0, 5/6) (1, 5/6)
								};
								\addplot[gray,mark options={solid}, dotted] coordinates {
									(2/3, 0) (2/3, 1)
								};
								\addplot[gray,mark options={solid}, dotted] coordinates {
									(5/6, 0) (5/6, 1)
								};
							\end{axis}
						\end{tikzpicture}
                        \begin{tikzpicture}
							\begin{axis}[
								plot box ratio = 1 1,
								xmin=-1/21,xmax=22/21,ymin=-1/21,ymax=22/21,
								font=\footnotesize,
								height=0.33\textwidth,
								width=0.33\textwidth,
								xticklabel = \empty,
								yticklabel = \empty,
								ytick style={draw=none},
								xtick style={draw=none},
								]
								\addplot[only marks,black,mark=*,mark size=2pt,mark options={solid}] coordinates {
									(0, 0) (1/6, 1/3) (1/3, 1/6) (1/2, 1/2) (2/3, 5/6) (5/6, 2/3)
								};
								\addplot[gray,mark options={solid}] coordinates {
									(0, -0.05) (0, 1.05)
								};
								\addplot[gray,mark options={solid}] coordinates {
									(1.0, -0.05) (1.0, 1.05)
								};
								\addplot[gray,mark options={solid}] coordinates {
									(-0.05, 0) (1.05, 0)
								};
								\addplot[gray,mark options={solid}] coordinates {
									(-0.05, 1.0) (1.05, 1.0)
								};
								\addplot[gray,mark options={solid}, dotted] coordinates {
									(0, 1/6) (1, 1/6)
								};
								\addplot[gray,mark options={solid}, dotted] coordinates {
									(0, 1/3) (1, 1/3)
								};
								\addplot[gray,mark options={solid}, dotted] coordinates {
									(0, 1/2) (1, 1/2)
								};
								\addplot[gray,mark options={solid}, dotted] coordinates {
									(1/6, 0) (1/6, 1)
								};
								\addplot[gray,mark options={solid}, dotted] coordinates {
									(1/3, 0) (1/3, 1)
								};
								\addplot[gray,mark options={solid}, dotted] coordinates {
									(1/2, 0) (1/2, 1)
								};
								\addplot[gray,mark options={solid}, dotted] coordinates {
									(0, 2/3) (1, 2/3)
								};
								\addplot[gray,mark options={solid}, dotted] coordinates {
									(0, 5/6) (1, 5/6)
								};
								\addplot[gray,mark options={solid}, dotted] coordinates {
									(2/3, 0) (2/3, 1)
								};
								\addplot[gray,mark options={solid}, dotted] coordinates {
									(5/6, 0) (5/6, 1)
								};
							\end{axis}
						\end{tikzpicture}
                        \begin{tikzpicture}
							\begin{axis}[
								plot box ratio = 1 1,
								xmin=-1/21,xmax=22/21,ymin=-1/21,ymax=22/21,
								font=\footnotesize,
								height=0.33\textwidth,
								width=0.33\textwidth,
								xticklabel = \empty,
								yticklabel = \empty,
								ytick style={draw=none},
								xtick style={draw=none},
								]
								\addplot[only marks,black,mark=*,mark size=2pt,mark options={solid}] coordinates {
									(0, 0) (1/6, 1/2) (1/3, 2/3) (1/2, 1/6) (2/3, 1/3) (5/6, 5/6)
								};
								\addplot[gray,mark options={solid}] coordinates {
									(0, -0.05) (0, 1.05)
								};
								\addplot[gray,mark options={solid}] coordinates {
									(1.0, -0.05) (1.0, 1.05)
								};
								\addplot[gray,mark options={solid}] coordinates {
									(-0.05, 0) (1.05, 0)
								};
								\addplot[gray,mark options={solid}] coordinates {
									(-0.05, 1.0) (1.05, 1.0)
								};
								\addplot[gray,mark options={solid}, dotted] coordinates {
									(0, 1/6) (1, 1/6)
								};
								\addplot[gray,mark options={solid}, dotted] coordinates {
									(0, 1/3) (1, 1/3)
								};
								\addplot[gray,mark options={solid}, dotted] coordinates {
									(0, 1/2) (1, 1/2)
								};
								\addplot[gray,mark options={solid}, dotted] coordinates {
									(1/6, 0) (1/6, 1)
								};
								\addplot[gray,mark options={solid}, dotted] coordinates {
									(1/3, 0) (1/3, 1)
								};
								\addplot[gray,mark options={solid}, dotted] coordinates {
									(1/2, 0) (1/2, 1)
								};
								\addplot[gray,mark options={solid}, dotted] coordinates {
									(0, 2/3) (1, 2/3)
								};
								\addplot[gray,mark options={solid}, dotted] coordinates {
									(0, 5/6) (1, 5/6)
								};
								\addplot[gray,mark options={solid}, dotted] coordinates {
									(2/3, 0) (2/3, 1)
								};
								\addplot[gray,mark options={solid}, dotted] coordinates {
									(5/6, 0) (5/6, 1)
								};
							\end{axis}
						\end{tikzpicture}

                        \hspace{-0.85cm} \begin{tikzpicture}
							\begin{axis}[
								plot box ratio = 1 1,
								xmin=-1/21,xmax=22/21,ymin=-1/21,ymax=22/21,
								font=\footnotesize,
								height=0.33\textwidth,
								width=0.33\textwidth,
								xticklabel = \empty,
								yticklabel = \empty,
								ytick style={draw=none},
								xtick style={draw=none},
								]
								\addplot[only marks,black,mark=*,mark size=2pt,mark options={solid}] coordinates {
									(0, 0) (1/6, 1/6) (1/3, 5/6) (1/2, 1/2) (2/3, 1/3) (5/6, 2/3)
								};
								\addplot[gray,mark options={solid}] coordinates {
									(0, -0.05) (0, 1.05)
								};
								\addplot[gray,mark options={solid}] coordinates {
									(1.0, -0.05) (1.0, 1.05)
								};
								\addplot[gray,mark options={solid}] coordinates {
									(-0.05, 0) (1.05, 0)
								};
								\addplot[gray,mark options={solid}] coordinates {
									(-0.05, 1.0) (1.05, 1.0)
								};
								\addplot[gray,mark options={solid}, dotted] coordinates {
									(0, 1/6) (1, 1/6)
								};
								\addplot[gray,mark options={solid}, dotted] coordinates {
									(0, 1/3) (1, 1/3)
								};
								\addplot[gray,mark options={solid}, dotted] coordinates {
									(0, 1/2) (1, 1/2)
								};
								\addplot[gray,mark options={solid}, dotted] coordinates {
									(1/6, 0) (1/6, 1)
								};
								\addplot[gray,mark options={solid}, dotted] coordinates {
									(1/3, 0) (1/3, 1)
								};
								\addplot[gray,mark options={solid}, dotted] coordinates {
									(1/2, 0) (1/2, 1)
								};
								\addplot[gray,mark options={solid}, dotted] coordinates {
									(0, 2/3) (1, 2/3)
								};
								\addplot[gray,mark options={solid}, dotted] coordinates {
									(0, 5/6) (1, 5/6)
								};
								\addplot[gray,mark options={solid}, dotted] coordinates {
									(2/3, 0) (2/3, 1)
								};
								\addplot[gray,mark options={solid}, dotted] coordinates {
									(5/6, 0) (5/6, 1)
								};
							\end{axis}
						\end{tikzpicture}
                        \begin{tikzpicture}
							\begin{axis}[
								plot box ratio = 1 1,
								xmin=-1/21,xmax=22/21,ymin=-1/21,ymax=22/21,
								font=\footnotesize,
								height=0.33\textwidth,
								width=0.33\textwidth,
								xticklabel = \empty,
								yticklabel = \empty,
								ytick style={draw=none},
								xtick style={draw=none},
								]
								\addplot[only marks,black,mark=*,mark size=2pt,mark options={solid}] coordinates {
									(0, 0) (1/6, 1/6) (1/3, 2/3) (1/2, 1/3) (2/3, 5/6) (5/6, 1/2)
								};
								\addplot[gray,mark options={solid}] coordinates {
									(0, -0.05) (0, 1.05)
								};
								\addplot[gray,mark options={solid}] coordinates {
									(1.0, -0.05) (1.0, 1.05)
								};
								\addplot[gray,mark options={solid}] coordinates {
									(-0.05, 0) (1.05, 0)
								};
								\addplot[gray,mark options={solid}] coordinates {
									(-0.05, 1.0) (1.05, 1.0)
								};
								\addplot[gray,mark options={solid}, dotted] coordinates {
									(0, 1/6) (1, 1/6)
								};
								\addplot[gray,mark options={solid}, dotted] coordinates {
									(0, 1/3) (1, 1/3)
								};
								\addplot[gray,mark options={solid}, dotted] coordinates {
									(0, 1/2) (1, 1/2)
								};
								\addplot[gray,mark options={solid}, dotted] coordinates {
									(1/6, 0) (1/6, 1)
								};
								\addplot[gray,mark options={solid}, dotted] coordinates {
									(1/3, 0) (1/3, 1)
								};
								\addplot[gray,mark options={solid}, dotted] coordinates {
									(1/2, 0) (1/2, 1)
								};
								\addplot[gray,mark options={solid}, dotted] coordinates {
									(0, 2/3) (1, 2/3)
								};
								\addplot[gray,mark options={solid}, dotted] coordinates {
									(0, 5/6) (1, 5/6)
								};
								\addplot[gray,mark options={solid}, dotted] coordinates {
									(2/3, 0) (2/3, 1)
								};
								\addplot[gray,mark options={solid}, dotted] coordinates {
									(5/6, 0) (5/6, 1)
								};
							\end{axis}
						\end{tikzpicture}
                        \begin{tikzpicture}
							\begin{axis}[
								plot box ratio = 1 1,
								xmin=-1/21,xmax=22/21,ymin=-1/21,ymax=22/21,
								font=\footnotesize,
								height=0.33\textwidth,
								width=0.33\textwidth,
								xticklabel = \empty,
								yticklabel = \empty,
								ytick style={draw=none},
								xtick style={draw=none},
								]
								\addplot[only marks,black,mark=*,mark size=2pt,mark options={solid}] coordinates {
									(0, 0) (1/6, 1/3) (1/3, 2/3) (1/2, 1/6) (2/3, 5/6) (5/6, 1/2)
								};
								\addplot[gray,mark options={solid}] coordinates {
									(0, -0.05) (0, 1.05)
								};
								\addplot[gray,mark options={solid}] coordinates {
									(1.0, -0.05) (1.0, 1.05)
								};
								\addplot[gray,mark options={solid}] coordinates {
									(-0.05, 0) (1.05, 0)
								};
								\addplot[gray,mark options={solid}] coordinates {
									(-0.05, 1.0) (1.05, 1.0)
								};
								\addplot[gray,mark options={solid}, dotted] coordinates {
									(0, 1/6) (1, 1/6)
								};
								\addplot[gray,mark options={solid}, dotted] coordinates {
									(0, 1/3) (1, 1/3)
								};
								\addplot[gray,mark options={solid}, dotted] coordinates {
									(0, 1/2) (1, 1/2)
								};
								\addplot[gray,mark options={solid}, dotted] coordinates {
									(1/6, 0) (1/6, 1)
								};
								\addplot[gray,mark options={solid}, dotted] coordinates {
									(1/3, 0) (1/3, 1)
								};
								\addplot[gray,mark options={solid}, dotted] coordinates {
									(1/2, 0) (1/2, 1)
								};
								\addplot[gray,mark options={solid}, dotted] coordinates {
									(0, 2/3) (1, 2/3)
								};
								\addplot[gray,mark options={solid}, dotted] coordinates {
									(0, 5/6) (1, 5/6)
								};
								\addplot[gray,mark options={solid}, dotted] coordinates {
									(2/3, 0) (2/3, 1)
								};
								\addplot[gray,mark options={solid}, dotted] coordinates {
									(5/6, 0) (5/6, 1)
								};
							\end{axis}
						\end{tikzpicture}
\caption{The $9$ possible distinct permutation sets in approximate order of decreasing energy, with the best possible configuration last.}
\label{fig:six_configs}
\end{figure}
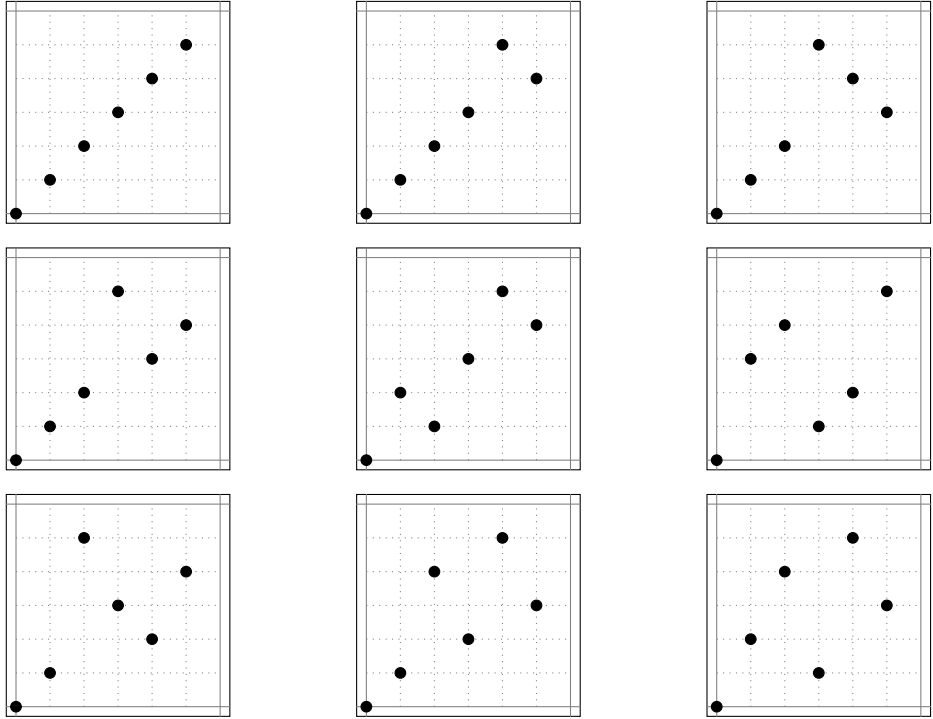

\begin{proof}

Just as in the $N=4$ case we note that since $f$ is logarithmically convex and differentiable, we can find a minimizer given a specific permutation type by finding a configuration of the permutation type that is stationary with respect to the energy $E_F$ in the $x$- and $y$-coordinates of every point. It is straightforward to calculate the $10$ partial derivatives with respect to the coordinates of $\bx^1=(x^1_1, x^1_2), \dots, \bx^5=(x_1^5, x_2^5)$. Setting these partial derivatives to $0$, one obtains condition \eqref{eq:n6_condition} for $X_a^{(6)}$.
We use the same argument as in the proof of Proposition \ref{prop:n4} to claim that a minimizer exists, and that it is unique in the case that $f$ is strictly logarithmically convex. 
\end{proof}

Unlike  the case $N=4$,  condition \eqref{eq:n6_condition} can be conceivably satisfied by the permutation set $X^{(6)}_{1/6}$ and in this case the condition becomes $$f'(1/6)(f(1/3)-f(1/2))=f'(1/3)(f(1/6)-f(1/3)).$$
Nevertheless, for the majority of potentials this equality will not be satisfied and thus in general the energy minimizer for $N=6$ will not be a permutation set, see Figure \ref{fig:6_pts}.






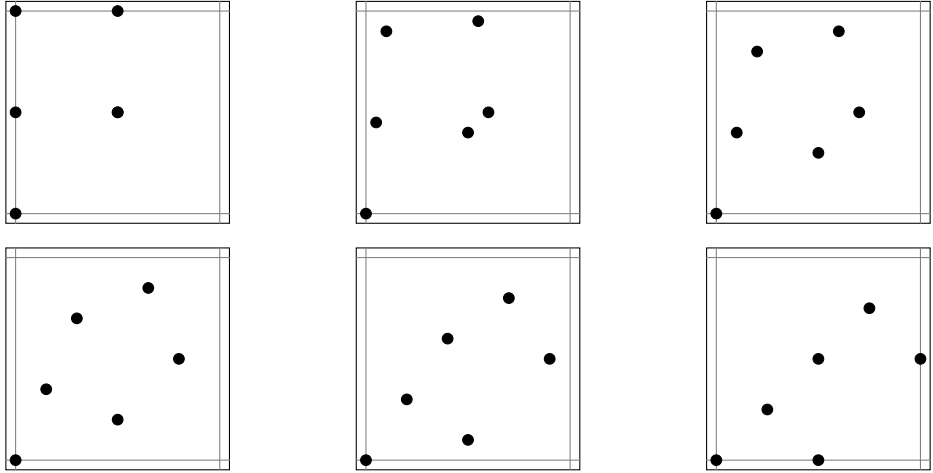
\begin{figure}[htp]
\hspace{-1cm}
						\begin{tikzpicture}
							\begin{axis}[
								plot box ratio = 1 1,
								xmin=-1/21,xmax=22/21,ymin=-1/21,ymax=22/21,
								font=\footnotesize,
								height=0.33\textwidth,
								width=0.33\textwidth,
								xticklabel = \empty,
								yticklabel = \empty,
								ytick style={draw=none},
								xtick style={draw=none},
								]
								\addplot[only marks,black,mark=*,mark size=2pt,mark options={solid}] coordinates {
									(0, 0) (0, 0.5) (0, 1) (0.5, 0.5) (0.5, 1) (0.5, 0.5)
								};
								\addplot[gray,mark options={solid}] coordinates {
									(0, -0.05) (0, 1.05)
								};
								\addplot[gray,mark options={solid}] coordinates {
									(1.0, -0.05) (1.0, 1.05)
								};
								\addplot[gray,mark options={solid}] coordinates {
									(-0.05, 0) (1.05, 0)
								};
								\addplot[gray,mark options={solid}] coordinates {
									(-0.05, 1.0) (1.05, 1.0)
								};
							\end{axis}
						\end{tikzpicture}
                        \begin{tikzpicture}
							\begin{axis}[
								plot box ratio = 1 1,
								xmin=-1/21,xmax=22/21,ymin=-1/21,ymax=22/21,
								font=\footnotesize,
								height=0.33\textwidth,
								width=0.33\textwidth,
								xticklabel = \empty,
								yticklabel = \empty,
								ytick style={draw=none},
								xtick style={draw=none},
								]
								\addplot[only marks,black,mark=*,mark size=2pt,mark options={solid}] coordinates {
									(0, 0) (0.05, 0.45) (0.1, 0.9) (0.5, 0.4) (0.55, 0.95) (0.6, 0.5)
								};
								\addplot[gray,mark options={solid}] coordinates {
									(0, -0.05) (0, 1.05)
								};
								\addplot[gray,mark options={solid}] coordinates {
									(1.0, -0.05) (1.0, 1.05)
								};
								\addplot[gray,mark options={solid}] coordinates {
									(-0.05, 0) (1.05, 0)
								};
								\addplot[gray,mark options={solid}] coordinates {
									(-0.05, 1.0) (1.05, 1.0)
								};
							\end{axis}
						\end{tikzpicture}
                        \begin{tikzpicture}
							\begin{axis}[
								plot box ratio = 1 1,
								xmin=-1/21,xmax=22/21,ymin=-1/21,ymax=22/21,
								font=\footnotesize,
								height=0.33\textwidth,
								width=0.33\textwidth,
								xticklabel = \empty,
								yticklabel = \empty,
								ytick style={draw=none},
								xtick style={draw=none},
								]
								\addplot[only marks,black,mark=*,mark size=2pt,mark options={solid}] coordinates {
									(0, 0) (0.1, 0.4) (0.2, 0.8) (0.5, 0.3) (0.6, 0.9) (0.7, 0.5)
								};
								\addplot[gray,mark options={solid}] coordinates {
									(0, -0.05) (0, 1.05)
								};
								\addplot[gray,mark options={solid}] coordinates {
									(1.0, -0.05) (1.0, 1.05)
								};
								\addplot[gray,mark options={solid}] coordinates {
									(-0.05, 0) (1.05, 0)
								};
								\addplot[gray,mark options={solid}] coordinates {
									(-0.05, 1.0) (1.05, 1.0)
								};
							\end{axis}
						\end{tikzpicture}

\hspace{-1cm}
                        \begin{tikzpicture}
							\begin{axis}[
								plot box ratio = 1 1,
								xmin=-1/21,xmax=22/21,ymin=-1/21,ymax=22/21,
								font=\footnotesize,
								height=0.33\textwidth,
								width=0.33\textwidth,
								xticklabel = \empty,
								yticklabel = \empty,
								ytick style={draw=none},
								xtick style={draw=none},
								]
								\addplot[only marks,black,mark=*,mark size=2pt,mark options={solid}] coordinates {
									(0, 0) (0.15, 0.35) (0.3, 0.7) (0.5, 0.2) (0.65, 0.85) (0.8, 0.5)
								};
								\addplot[gray,mark options={solid}] coordinates {
									(0, -0.05) (0, 1.05)
								};
								\addplot[gray,mark options={solid}] coordinates {
									(1.0, -0.05) (1.0, 1.05)
								};
								\addplot[gray,mark options={solid}] coordinates {
									(-0.05, 0) (1.05, 0)
								};
								\addplot[gray,mark options={solid}] coordinates {
									(-0.05, 1.0) (1.05, 1.0)
								};
							\end{axis}
						\end{tikzpicture}
                        \begin{tikzpicture}
							\begin{axis}[
								plot box ratio = 1 1,
								xmin=-1/21,xmax=22/21,ymin=-1/21,ymax=22/21,
								font=\footnotesize,
								height=0.33\textwidth,
								width=0.33\textwidth,
								xticklabel = \empty,
								yticklabel = \empty,
								ytick style={draw=none},
								xtick style={draw=none},
								]
								\addplot[only marks,black,mark=*,mark size=2pt,mark options={solid}] coordinates {
									(0, 0) (0.2, 0.3) (0.4, 0.6) (0.5, 0.1) (0.7, 0.8) (0.9, 0.5)
								};
								\addplot[gray,mark options={solid}] coordinates {
									(0, -0.05) (0, 1.05)
								};
								\addplot[gray,mark options={solid}] coordinates {
									(1.0, -0.05) (1.0, 1.05)
								};
								\addplot[gray,mark options={solid}] coordinates {
									(-0.05, 0) (1.05, 0)
								};
								\addplot[gray,mark options={solid}] coordinates {
									(-0.05, 1.0) (1.05, 1.0)
								};
							\end{axis}
						\end{tikzpicture}
                        \begin{tikzpicture}
							\begin{axis}[
								plot box ratio = 1 1,
								xmin=-1/21,xmax=22/21,ymin=-1/21,ymax=22/21,
								font=\footnotesize,
								height=0.33\textwidth,
								width=0.33\textwidth,
								xticklabel = \empty,
								yticklabel = \empty,
								ytick style={draw=none},
								xtick style={draw=none},
								]
								\addplot[only marks,black,mark=*,mark size=2pt,mark options={solid}] coordinates {
									(0, 0) (0.25, 0.25) (0.5, 0.5) (0.5, 0) (0.75, 0.75) (1, 0.5)
								};
								\addplot[gray,mark options={solid}] coordinates {
									(0, -0.05) (0, 1.05)
								};
								\addplot[gray,mark options={solid}] coordinates {
									(1.0, -0.05) (1.0, 1.05)
								};
								\addplot[gray,mark options={solid}] coordinates {
									(-0.05, 0) (1.05, 0)
								};
								\addplot[gray,mark options={solid}] coordinates {
									(-0.05, 1.0) (1.05, 1.0)
								};
							\end{axis}
						\end{tikzpicture}
\caption{$X_a^{(6)}$ for $a \in \{0, 0.05, 0.1, 0.15, 0.2, 0.25\}$ (with the double points $(0.5, 0.5)$ and $(0, 0) \simeq (0, 1)$ for $a=0$). For the potential $f(t) = \frac12-t+t^2$ the minimizer is attained for $a=0.16375\dots$ solving $20a^3-15a^2+\frac{13}2a-\frac34=0$.}\label{fig:6_pts}
\end{figure}

\section*{Acknowledgment}


DB was supported by the Simons Travel Support for Mathematicians and the Fulbright-NAWI Graz Visiting Professor fellowship.   NN was supported by the ESF Plus Young Researchers Group ReSIDA-H2 (SAB, 10064931). We would also like to thank Alexey Glazyrin for fruitful discussions. 

\appendix

\section*{Appendix}

\section{A collection of potentials}\label{s.potentials}

In this section we  collect some potentials which are relevant in the context of tensor product energy. Natural examples arise from discrepancy theory, quasi-Monte Carlo integration and harmonic analysis. F
We also give  other examples which might shed  light on optimal point configurations. 

\paragraph{Quasi-Monte Carlo integration.} Let $s \in \bbN$ and let $a_{-1}, a_0, a_1, \dots,$ $a_s \geq 0$ be parameters with $a_s > 0$ and $a_{-1}+ a_0 > 0$. Let $H^s_\ba(\bbT)$ be the Sobolev space of periodic functions $f: \bbT \rightarrow \bbR$ such that $f^{(k)} \in L_2(\bbT)$ for all weak derivatives of order $k=0, 1, \dots, s$, equipped with the norm (which is a norm by the assumptions on the parameters $a_i$)
$$
\|f\|_{H^s_\ba(\bbT)}^2 \coloneqq a_{-1} \left(\int_\bbT f(x) \, \dx x\right)^2 + \sum_{k=0}^s a_k \int_\bbT f^{(k)}(x)^2 \, \dx x.
$$
This turns $H^s_\ba(\bbT)$ into a reproducing kernel Hilbert space with kernel given by
$$
K_\ba(x, y) = \sum_{m \in \bbZ} \frac{\exp(2\pi\iu m (x-y))}{a_{-1} \delta_{m, 0} + \sum_{k=0}^s a_k (2\pi m)^{2k}}.
$$
For $d$ dimensions we simply take a $d$-fold tensor product $H^s_\ba(\bbT^d) = H^s_\ba(\bbT) \otimes \dots \otimes H^s_\ba(\bbT)$, whose kernel can be given by
$$
K_\ba^d(\bx, \by) = \prod_{i=1}^d K_\ba(x_i, y_i).
$$
Spaces like these are sometimes also called \emph{Korobov spaces} \cite{Dic08, DSWW06, GL22, Kuo03}, although note that some authors use this term for different spaces \cite{DTU18, KNU25, Kor59}. For $X \sbse \bbT^d, \#X = N$ the worst case error of the quasi-Monte Carlo rule can be determined to be \cite{NW10}
$$
\sup_{\|f\|_{H^s_\ba(\bbT^d)} \leq 1} \left|\frac1N \sum_{\bx \in X} f(\bx) - \int_{\bbT^d} f(\bx) \, \dx\bx\right|^2 = -\frac1{(a_{-1} + a_0)^d} + \frac1{N^2} \sum_{\bx, \by \in X} K_\ba^d(\bx, \by).
$$
Minimizing the worst case error is thus equivalent to minimizing
$$
\sum_{\substack{\bx, \by \in X \\ \bx \neq \by}} K_\ba^d(\bx, \by),
$$
which, since the kernel is of the form $K_\ba^d(\bx, \by) = \prod_{i=1}^d f_\ba(x_i-y_i)$, can be interpreted as a tensor product energy. A particular example would be, for $p > 0$ a parameter, $a_{-1} = 1$, $a_{s} = p^{-1}$ and $a_0 = a_1 = \dots = a_{s-1} = 0$, for which the kernel is of the form
$$
K_\ba(x, y) = 1 + (-1)^{s-1} \frac{p}{(2s)!} B_{2s}(x-y),
$$
where $B_{2s}$ is a Bernoulli polynomial of degree $2s$, given by
\begin{align} \label{eq:Bernoulli_poly}
	B_{2s}(t) = - (2s)! \sum_{m \neq 0} \frac{\exp(2\pi\iu m t)}{(2\pi\iu m)^{2s}}.
\end{align}
The case of $s = 1$ and $p \in \{1, 6\}$ was considered in \cite{HO16}. More generally, these potentials were studied for $s=1$ in \cite{Nag25b}. Similar but slightly different potentials were considered in \cite{BD13b}.

\paragraph{(Smooth) Discrepancy.} Discrepancy quantifies how uniform a set $X \sbse \bbT^d$ is in a geometric way. We describe the definition here and show its relation to the quasi-Monte Carlo method. Let $s \in \bbN$ and for $x \in \bbR$ define the B-splines
$$
B_{0, 1}^1(x) \coloneqq \chi^{}_{[0, 1)}(x)
$$
and recursively
$$
B_{0, 1}^s(x) \coloneqq \int_{x-1}^x B_{0, 1}^{s-1}(u) \, \dx u = \int_0^1 B_{0, 1}^{s-1}(x-u) \, \dx u.
$$
Note that $\supp B_{0, 1}^s = [0, s]$. With $(x)_+ \coloneqq \max\{0, x\}$ we even have explicitly
$$
B_{0, 1}^s(x) = \frac{1}{(s-1)!} \sum_{k=0}^s (-1)^k {s \choose k} (x-k)_+^{s-1}.
$$
For $0 \leq \alpha < 1$ and $0 < \delta \leq 1$ set
$$
B_{\alpha, \delta}^s(x) \coloneqq B_{0, 1}^s\left(\frac{x-\alpha}\delta\right) = \delta^{-1} \int_0^\delta B_{\alpha, \delta}^{s-1}(x-u) \, \dx u
$$
with $\supp B_{\alpha, \delta}^s = [\alpha, \alpha + s \delta]$. For $x \in [0, 1) \simeq \bbT$ introduce the periodization
$$
\tilde B_{\alpha, \delta}^s(x) \coloneqq \sum_{k \in \bbZ} B_{\alpha, \delta}^s(k+x) = \delta^{-1} \int_0^\delta \tilde B_{\alpha, \delta}^{s-1}(x-u) \, \dx u,
$$
where $x-u \in \bbT$ with wrap-around on the torus. Finally, let
\begin{align} \label{eq:phi_recursive}
	\varphi_{\alpha, \delta}^s(x) \coloneqq \delta^{s-1} \tilde B_{\alpha, \delta}^s(x) = \int_0^\delta \varphi_{\alpha, \delta}^{s-1}(x-u) \, \dx u.
\end{align}
It is easy to verify that
\begin{itemize}
	\item $\varphi_{\alpha, \delta}^1(x) = \chi^{}_{[\alpha, \{\alpha + \delta\})}(x)$ in the sense of periodic intervals \cite{HKP21}, where $\{\cdot\}$ denotes the fractional part,
	
	\item $\varphi_{\alpha, 1}^s(x) \equiv 1$ and
	
	\item $\int_0^1 \varphi_{\alpha, \delta}^s(x) \, \dx x = \delta^s$.
\end{itemize}
Let us collect some facts about these functions $\varphi_{\alpha, \delta}^s$.
\begin{lem} \label{lem:spline_properties}
	We have the Fourier representation (with convergence in $L_2$)
	\begin{align} \label{eq:spline_fourier}
		\varphi_{\alpha, \delta}^s(x) = \delta^s + \sum_{m \neq 0} \left(\frac{1-\exp(-2\pi\iu m \delta)}{2\pi\iu m}\right)^s \exp(2\pi\iu m (x-\alpha)).
	\end{align}
	Furthermore, for all $x, y \in [0, 1)$ it holds
	\begin{align} \label{eq:spline_inner_prod}
		\int_0^1 \int_0^1 \varphi_{\alpha, \delta}^s(x) \varphi_{\alpha, \delta}^s(y) \, \dx \delta \, \dx \alpha = \frac1{2s+1} + \frac{(-1)^{s-1}}{(s!)^2} B_{2s}(|x-y|).
	\end{align}
\end{lem}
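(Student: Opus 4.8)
\subsection*{Proof proposal for Lemma \ref{lem:spline_properties}}

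The plan is to prove both identities by computing Fourier coefficients. For the Fourier representation \eqref{eq:spline_fourier} I would induct on $s$. The base case $s=1$ follows from the already-noted fact $\varphi^1_{\alpha,\delta}=\chi^{}_{[\alpha,\{\alpha+\delta\})}$: its $m$-th Fourier coefficient is $\int_\alpha^{\alpha+\delta}e^{-2\pi\iu m x}\,\dx x$, which is $\delta$ for $m=0$ and $e^{-2\pi\iu m\alpha}\,\frac{1-e^{-2\pi\iu m\delta}}{2\pi\iu m}$ for $m\ne 0$, matching \eqref{eq:spline_fourier}. For the inductive step, the recursion \eqref{eq:phi_recursive}, $\varphi^s_{\alpha,\delta}(x)=\int_0^\delta \varphi^{s-1}_{\alpha,\delta}(x-u)\,\dx u$, is circular convolution on $\bbT$ with $\chi^{}_{[0,\delta]}$ (valid since $0<\delta\le 1$), so it multiplies the $0$-th Fourier coefficient by $\delta$ and the $m$-th by $\frac{1-e^{-2\pi\iu m\delta}}{2\pi\iu m}$; after $s-1$ such steps this produces the claimed $s$-th power. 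Since these coefficients are $O(|m|^{-s})$, they are square-summable for every $s\ge 1$, which gives convergence in $L_2$ (and, for $s\ge 2$, absolute and uniform convergence).

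For \eqref{eq:spline_inner_prod} I would integrate in $\alpha$ first, which is legitimate by Fubini since the integrand is bounded. Writing $h=\varphi^s_{0,\delta}$ and using $\varphi^s_{\alpha,\delta}(x)=h(x-\alpha)$ from \eqref{eq:spline_fourier}, the inner integral $\int_0^1\varphi^s_{\alpha,\delta}(x)\varphi^s_{\alpha,\delta}(y)\,\dx\alpha$ is, as a function of $x-y$, the autocorrelation of $h$ on $\bbT$; equivalently, expanding both factors in Fourier series and integrating in $\alpha$ kills the off-diagonal terms. By Parseval its $m$-th Fourier coefficient is $|\widehat h(m)|^2$. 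Using $|1-e^{-2\pi\iu m\delta}|^2=2-2\cos(2\pi m\delta)=4\sin^2(\pi m\delta)$ one gets $|\widehat h(m)|^2=\sin^{2s}(\pi m\delta)/(\pi m)^{2s}$ for $m\ne 0$ and $\delta^{2s}$ for $m=0$; since $2s\ge 2$ these bounds are summable uniformly in $\delta$, so $\int_0^1\varphi^s_{\alpha,\delta}(x)\varphi^s_{\alpha,\delta}(y)\,\dx\alpha=\delta^{2s}+\sum_{m\ne 0}\frac{\sin^{2s}(\pi m\delta)}{(\pi m)^{2s}}e^{2\pi\iu m(x-y)}$ pointwise. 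Integrating term by term in $\delta$ (justified by $|\sin|\le 1$ and $\sum|m|^{-2s}<\infty$): the constant term gives $\int_0^1\delta^{2s}\,\dx\delta=\frac1{2s+1}$, while for $m\ne 0$, periodicity of $\sin^{2s}$ reduces $\int_0^1\sin^{2s}(\pi m\delta)\,\dx\delta$ to the Wallis integral $\frac1\pi\int_0^\pi\sin^{2s}v\,\dx v=\frac{(2s)!}{4^s(s!)^2}$, independently of $m$.

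It remains to identify $\sum_{m\ne 0}\frac{(2s)!}{4^s(s!)^2(\pi m)^{2s}}e^{2\pi\iu m(x-y)}$ with the Bernoulli term. From \eqref{eq:Bernoulli_poly} and $(2\pi\iu m)^{2s}=(-1)^s(2\pi)^{2s}m^{2s}$ one obtains $\sum_{m\ne 0}\frac{e^{2\pi\iu m t}}{m^{2s}}=\frac{(-1)^{s-1}(2\pi)^{2s}}{(2s)!}B_{2s}(t)$; substituting this and using $(2\pi)^{2s}=4^s\pi^{2s}$ cancels all constants except $\frac{(-1)^{s-1}}{(s!)^2}$, and since the series is even in $t$ we may replace $x-y$ by $|x-y|$, giving exactly \eqref{eq:spline_inner_prod}. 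The only point requiring genuine care is the justification of the two interchanges of sum and integral, together with the mode of convergence of the Fourier series; this is precisely what the $|m|^{-2s}$ decay with $2s\ge 2$ handles, while the Wallis evaluation and the constant-chasing against the Bernoulli polynomial are routine.
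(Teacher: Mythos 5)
Your proposal is correct and follows essentially the same route as the paper: induction on the Fourier coefficients via the convolution recursion for \eqref{eq:spline_fourier}, then integrating in $\alpha$ to isolate $|\widehat{h}(m)|^2$, integrating in $\delta$, and matching constants against \eqref{eq:Bernoulli_poly} for \eqref{eq:spline_inner_prod}. The only (cosmetic) difference is that you evaluate $\int_0^1\bigl|1-\exp(-2\pi\iu m\delta)\bigr|^{2s}\,\dx\delta$ through $4^s\sin^{2s}(\pi m\delta)$ and the Wallis integral, whereas the paper expands $(1-\exp(2\pi\iu m\delta))^{2s}$ binomially; both give $\binom{2s}{s}$.
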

\begin{proof}
	We prove \eqref{eq:spline_fourier} via induction on $s$. By periodicity we may assume $\alpha = 0$. For $s = 1$ we have $\varphi_{0, \delta}^1(x) = \chi^{}_{[0, \delta)}(x)$ with Fourier coefficients
	$$
	\int_0^1 \varphi_{0, \delta}^1(x) \exp(-2\pi\iu m x) \, \dx x = \begin{cases}
		\delta, & m = 0, \\
		\frac{1-\exp(-2\pi\iu m \delta)}{2\pi\iu m}, & m \neq 0.
	\end{cases}
	$$
	Assuming the formula holds for $s-1$, we have (using periodicity)
	\begin{align*}
		& \int_0^1 \varphi_{0, \delta}^s(x) \exp(-2\pi\iu m x) \, \dx x = \int_0^\delta \int_0^1 \varphi_{0, \delta}^{s-1}(x-u) \exp(-2\pi\iu m x) \, \dx x \, \dx u \\
		= & \int_0^\delta \exp(-2\pi\iu m u) \, \dx u \int_0^1 \varphi_{0, \delta}^{s-1}(x) \exp(-2\pi\iu m x) \, \dx x \\
		= & \begin{cases}
			\delta^s, & m = 0, \\
			\left(\frac{1-\exp(-2\pi\iu m \delta)}{2\pi\iu m}\right)^s, & m \neq 0.
		\end{cases}
	\end{align*}
	This shows \eqref{eq:spline_fourier}. We use this Fourier representation to show the second formula \eqref{eq:spline_inner_prod}. Indeed, we can then write
	\begin{align*}
		& \int_0^1 \int_0^1 \varphi_{\alpha, \delta}^s(x) \varphi_{\alpha, \delta}^s(y) \, \dx \delta \, \dx \alpha \\
		= & \frac1{2s+1} + \sum_{m \neq 0} \left( \int_0^1 \left|\frac{1-\exp(-2\pi\iu m \delta)}{2\pi\iu m}\right|^{2s} \, \dx \delta\right) \exp(2\pi\iu m (x-y)).
	\end{align*}
	Since for $t \in \bbR$ it holds
	$$
	|1- \exp(-\iu t)|^{2} = -\exp(-\iu t) (1-\exp(\iu t))^2,
	$$
	for $t = 2\pi m \delta$ we have
	\begin{align*}
		\int_0^1 |1- \exp(-2\pi\iu m \delta)|^{2s} \, \dx \delta & = (-1)^s \int_0^1 \exp(-2\pi\iu s m \delta) (1-\exp(2\pi\iu m \delta))^{2s} \, \dx \delta \\
		& = \sum_{k=0}^{2s} (-1)^{s+k} {2s \choose k} \int_0^1 \exp(2\pi\iu (k-s) m \delta) \, \dx \delta \\
		& = {2s \choose s}.
	\end{align*}
	This finally leads to the desired result (using \eqref{eq:Bernoulli_poly})
	\begin{align*}
		\int_0^1 \int_0^1 \varphi_{\alpha, \delta}^s(x) \varphi_{\alpha, \delta}^s(y) \, \dx \delta \, \dx \alpha & = \frac1{2s+1} + {2s \choose s} \sum_{m \neq 0} \frac{\exp(2\pi\iu m (x-y))}{(2\pi m)^{2s}} \\
		& = \frac1{2s+1} + \frac{(-1)^{s-1}}{(s!)^2} B_{2s}(|x-y|).
	\end{align*}
\end{proof}
The \emph{$s$-smooth periodic $L_2$-discrepancy} of a point set $X \sbse \bbT^d, \#X = N$ will be defined as
$$
L_{2, s}^{\text{per}}(X)^2 \coloneqq \int_{[0, 1)^d} \int_{(0, 1]^d} \left|\frac1N \sum_{\bx \in X} \prod_{i=1}^d \varphi_{\alpha_i, \delta_i}^s(x_i) - \int_{\bbT^d}\prod_{i=1}^d \varphi_{\alpha_i, \delta_i}^s(x_i) \, \dx \bx \right|^2 \, \dx \underline{\delta} \, \dx \underline{\alpha},
$$
where $\underline{\alpha} = (\alpha_1, \dots, \alpha_d)$ and $\underline{\delta} = (\delta_1, \dots, \delta_d)$. From Lemma \ref{lem:spline_properties} we conclude the following \emph{Warnock-type formula} for periodic discrepancy (the case for $s=1$ already appearing in \cite{HKP21, HO16}).
\begin{thm} \label{thm:smooth_discr}
	It holds
	\begin{align*}
		& L_{2, s}^{\text{per}}(X)^2 \\
		= & -\frac1{(2s+1)^d} + \frac1{N^2} \sum_{\bx, \by \in X} \prod_{i=1}^d \left(\frac1{2s+1} + \frac{(-1)^{s-1}}{(s!)^2} B_{2s}(|x_i-y_i|)\right).
	\end{align*}
\end{thm}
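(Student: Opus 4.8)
The plan is to expand the square in the definition of $L_{2,s}^{\text{per}}(X)^2$ and integrate term by term in $(\underline{\alpha},\underline{\delta})$, using the tensor product structure to reduce everything to the one-dimensional integrals already computed in Lemma \ref{lem:spline_properties}. Abbreviate $g_{\underline{\alpha},\underline{\delta}}(\bx) = \prod_{i=1}^d \varphi_{\alpha_i,\delta_i}^s(x_i)$, and observe first that its torus mean is $\int_{\bbT^d} g_{\underline{\alpha},\underline{\delta}}(\bx)\,\dx\bx = \prod_{i=1}^d \int_0^1 \varphi_{\alpha_i,\delta_i}^s(x_i)\,\dx x_i = \prod_{i=1}^d \delta_i^s$, using the identity $\int_0^1\varphi_{\alpha,\delta}^s(x)\,\dx x = \delta^s$ recorded just before Lemma \ref{lem:spline_properties}. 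Expanding $\bigl|\tfrac1N\sum_{\bx\in X} g_{\underline{\alpha},\underline{\delta}}(\bx) - \prod_i\delta_i^s\bigr|^2$ produces three pieces: the quadratic term $\tfrac1{N^2}\sum_{\bx,\by\in X} g_{\underline{\alpha},\underline{\delta}}(\bx)\,g_{\underline{\alpha},\underline{\delta}}(\by)$, the cross term $-\tfrac2N\sum_{\bx\in X} g_{\underline{\alpha},\underline{\delta}}(\bx)\prod_i\delta_i^s$, and the constant term $\prod_i\delta_i^{2s}$. Since all integrands are bounded and the sums are finite, Fubini permits integrating each piece over $[0,1)^d\times(0,1]^d$ separately, and the product form of $g_{\underline{\alpha},\underline{\delta}}$ turns each resulting integral into a product over the $d$ coordinates.

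For the quadratic term the $i$-th coordinate factor is precisely $\int_0^1\int_0^1 \varphi_{\alpha_i,\delta_i}^s(x_i)\varphi_{\alpha_i,\delta_i}^s(y_i)\,\dx\delta_i\,\dx\alpha_i$, which by \eqref{eq:spline_inner_prod} equals $\tfrac1{2s+1} + \tfrac{(-1)^{s-1}}{(s!)^2}B_{2s}(|x_i-y_i|)$; hence this piece integrates to $\tfrac1{N^2}\sum_{\bx,\by\in X}\prod_{i=1}^d\bigl(\tfrac1{2s+1} + \tfrac{(-1)^{s-1}}{(s!)^2}B_{2s}(|x_i-y_i|)\bigr)$. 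For the cross and constant terms I use $\int_0^1\varphi_{\alpha,\delta}^s(x)\,\dx\alpha = \delta^s$, which is immediate from the Fourier representation \eqref{eq:spline_fourier} since integration in $\alpha$ annihilates every $m\neq 0$ term. Then the $i$-th factor of the cross-term integral is $\int_0^1\bigl(\int_0^1\varphi_{\alpha_i,\delta_i}^s(x_i)\,\dx\alpha_i\bigr)\delta_i^s\,\dx\delta_i = \int_0^1\delta_i^{2s}\,\dx\delta_i = \tfrac1{2s+1}$, so the cross term contributes $-\tfrac2N\cdot N\cdot\tfrac1{(2s+1)^d} = -\tfrac2{(2s+1)^d}$; likewise the constant term integrates to $\prod_{i=1}^d\int_0^1\delta_i^{2s}\,\dx\delta_i = \tfrac1{(2s+1)^d}$.

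Summing the three contributions gives $-\tfrac1{(2s+1)^d} + \tfrac1{N^2}\sum_{\bx,\by\in X}\prod_{i=1}^d\bigl(\tfrac1{2s+1} + \tfrac{(-1)^{s-1}}{(s!)^2}B_{2s}(|x_i-y_i|)\bigr)$, which is the asserted formula. There is no genuine obstacle beyond bookkeeping: the one point needing a moment's care is to keep integration in $x$ (used for the torus mean of $g$) distinct from integration in $\alpha$ (used for the cross and constant terms) and to notice that both yield $\delta^s$; everything else is a direct application of Lemma \ref{lem:spline_properties} together with $\int_0^1\delta^{2s}\,\dx\delta = \tfrac1{2s+1}$.
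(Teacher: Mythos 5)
Your proof is correct and is exactly the argument the paper intends: the theorem is stated as a direct consequence of Lemma \ref{lem:spline_properties}, obtained by expanding the square, applying \eqref{eq:spline_inner_prod} coordinatewise to the quadratic term, and using $\int_0^1\varphi_{\alpha,\delta}^s\,\dx\alpha=\int_0^1\varphi_{\alpha,\delta}^s\,\dx x=\delta^s$ together with $\int_0^1\delta^{2s}\,\dx\delta=\tfrac1{2s+1}$ for the cross and constant terms. Nothing is missing; your bookkeeping matches the asserted formula.
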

While the general equivalence of QMC and (smooth) discrepancy is well-known \cite{Tem19}, this explicit formula does not seem to have appeared in the literature so far. Comparing with $K_\ba^d$ from the previous part, we see that this is equivalent to the QMC worst case error over the space $H_\ba^s(\bbT^d)$ with parameters
$$
a_{-1} = 2s+1, a_0 = a_1 = \dots = a_{s-1} = 0, a_s = {2s \choose s}^{-1}.
$$

\paragraph{Diaphony.} Diaphony \cite{Lev95, Zin76} of $X \sbse \bbT^d, \#X = N$ is a quantity given by (without the square root)
$$
\sum_{\bm \in \bbZ} \left|\frac1N \sum_{\bx \in X} \exp(2\pi\iu \bm^\top \bx)\right|^2 \prod_{i=1}^d \begin{cases}
	1, & m_i = 0, \\
	\frac1{2\pi m_i}, & m_i \neq 0.
\end{cases}
$$
This is turns out to be the worst case error for the quasi-Monte Carlo rule over the space $H^1_\ba(\bbT^d)$ with parameters $a_{-1}=1, a_0 = 0, a_{1} = 1$.

\paragraph{Tensorized Poisson kernel.} The \emph{Poisson kernel} with parameter $r \in (0, 1)$ is given by
$$
P_r(t)= \sum_{m \neq 0} r^{|m|} \exp(2\pi\iu m t) = \frac{1-r^2}{1 - 2 r \cos(2\pi t) + r^2}
$$
for $t \in \bbT$. Tensorization leads to the kernel
$$
K^d_{P_r}(\bx, \by) \coloneqq \prod_{i=1}^d P_r(x_i-y_i),
$$
For fixed $r$ this can be seen as a kernel over a Sobolev space of infinite dominating mixed smoothness (with certain exponential decay of the Fourier coefficients).

\paragraph{Logarithmically convex potentials.} We finish by mentioning some logarithmically convex potentials. Here $k$ can be seen as a shape parameter and $c$ is a parameter that controls the logarithmic convexity for a certain range.
\begin{itemize}
	\item Polynomial potential
	$$
	f(t) = 1 + c\left(t^k + (1-t)^k\right), \quad k > 1, 0<c \leq k-1
	$$
	
	\item Another polynomial potential
	$$
	f(t) = 1 + c \left|2t-1\right|^k, \quad k > 1, 0<c \leq k-1
	$$
	
	\item Potential via the entropy function
	$$
	f(t) = 1 + c\left(t \ln t + (1-t) \ln (1-t)\right), \quad 0 < c \leq c_0 = 1.23325675\dots
	$$
	
	\item Ellipse potential
	$$
	f(t) = 1 - c \sqrt{t(1-t)}, \quad 0 < c \leq (3/2)^{3/2} = 1.8371173\dots
	$$
\end{itemize}


\bibliographystyle{plain}
\bibliography{refs}

\end{document}